\documentclass{CSML}

\def\dOi{12(2:1)2016}
\lmcsheading%
{\dOi}
{1--29}
{}
{}
{May\phantom.~12, 2015}
{Apr.~\phantom05, 2016}
{}

\ACMCCS{[{\bf Theory of computation}]: Models of
  computation---Computability---Recursive functions} 
\subjclass{F.4.1}

\usepackage{graphicx}
\usepackage{hyperref}
\usepackage{amssymb,amsmath,amsthm}

\DeclareMathOperator{\KP}{\textit{K}\,}
\DeclareMathOperator{\KS}{\textit{C}\,}
\DeclareMathOperator{\BB}{\textit{BB}\,}
\DeclareMathOperator{\m}{\mathbf{m}\,}
\DeclareMathOperator{\dom}{\mathrm{dom}\,}

\newcommand{\cnd}{\mskip2mu|\mskip2mu}
\mathsurround=0.6pt

\let\le=\leqslant
\let\ge=\geqslant

\begin{document}

\title[Generic algorithms, halting problem and optimal machines]{Generic algorithms for halting problem and\\ optimal machines revisited\rsuper*}
\author[L.~Bienvenu]{Laurent Bienvenu\rsuper a}
\address{{\lsuper a}LIRMM, CNRS \& University of Montpellier, 161 rue Ada, 34095 Montpellier Cedex 5}
\thanks{{\lsuper a}Supported by the John Templeton Foundation and ANR RaCAF ANR-15-CE40-0016-01 grants}

\author[D.~Desfontaines]{Damien Desfontaines\rsuper b}
\address{{\lsuper b}Google, Brandschenkestrasse 110, 8002 Z\"urich}
%\thanks{}

\author[A.~Shen]{Alexander Shen\rsuper c}
\address{{\lsuper c}LIRMM, CNRS \& University of Montpellier, 161 rue Ada, 34095 Montpellier Cedex 5; National Research University Higher School of Economics, Moscow}
\thanks{{\lsuper c}Work partially done while visiting the Computer Science Department of the National Research University Higher School of Economics, Moscow. Supported by ANR RaCAF ANR-15-CE40-0016-01 and RFBR 16-01-00362 grants}

\keywords{generic algorithms, halting problem, Kolmogorov complexity, optimal enumerations}
\titlecomment{{\lsuper*}An extended abstract of this paper, ``What percentage of programs halt?'', was presented at ICALP 2015 conference~\cite{BienvenuDS2015}.}

\begin{abstract}

The halting problem is undecidable --- but can it be solved for ``most'' inputs? This natural question was considered in a number of papers, in different settings. We revisit their results and show that most of them can be easily proven in a natural framework of optimal machines (considered in algorithmic information theory) using the notion of Kolmogorov complexity.

We also consider some related questions about this framework and about asymptotic properties of the halting problem. In particular, we show that the fraction of terminating programs cannot have a limit, and all limit points are Martin-L\"of random reals. We then consider mass problems of finding an approximate solution of halting problem and probabilistic algorithms for them, proving both positive and negative results.

We consider the fraction of terminating programs that require a long time for termination, and describe this fraction using the busy beaver function. We also consider approximate versions of separation problems, and revisit Schnorr's results about optimal numberings showing how they can be generalized. 

\end{abstract}

\maketitle

\section{Introduction}

One of the most basics theorems of computability theory is that the halting problem is undecidable, i.e.,  there is no algorithm that, given a computation, says whether it terminates or not.  A related result says that for some computations the termination statement is undecidable in G\"odel's sense (neither provable nor refutable). Still, in many cases the termination question is not that hard. It could be that the difficult cases are rare exceptions and that for most cases the answer can be obtained effectively (and perhaps even easily).

This question, while natural, is difficult to formulate. For qualitative questions it does not matter which computational model or programming language we use in the formulation of the halting problem. Technically speaking, all reasonable formulations lead to $m$-complete computably enumerable sets, and all $m$-complete sets are computably isomorphic (Myhill  isomorphism theorem, see, e.g., \cite{Rogers}). However, for quantitative questions the choice of programming language is very important: it is easy to imagine some universal programming language for which most programs terminate (or hang) for some trivial reasons.

One may try to fix some computational model or programming language. For example, we may consider Turing machines with a fixed alphabet, and then ask whether there exists an approximation algorithm for the halting problem, whose success rate among all machines with $n$ states converges to $1$ as $n\to\infty$. This question, however, is sensitive to the details of the definition. For example, it was shown in \cite{HamkinsMiasnikov} that for Turing machines with one-sided tape the success rate may converge to $1$: speaking informally, this happens because most machines fall off the tape rather quickly. This argument, however, does not work for machines with a two-sided tape, for which the similar question remains open. 

Looking for more invariant statements, one should put some restrictions on the way the computations are encoded, and these restrictions could be quite technical. For example, in \cite{CJ1999,KohlerSchindelhauerZiegler} (see also~\cite{Valmari} for a survey of these and some other results), numberings of computable
functions are considered where each function occupies a $\Omega(1)$-fraction of $n$-bit programs for all sufficiently large $n$, together with encodings for pairs that have some special property. However, in~\cite{Lynch1974} a more natural requirement for the programming language (motivated by  the algorithmic information theory) was already suggested. We show (Section~\ref{sec:approximation}) that results about approximate algorithms for halting problem from \cite{CJ1999,KohlerSchindelhauerZiegler}  remain true in this simple setting, and can be easily proved using Kolmogorov complexity. Moreover, they remain true for a weaker requirement than used in \cite{Schnorr1974,Lynch1974}; we discuss this (and some related questions) in Section~\ref{sec:optimality}. 

The next three sections (Sections~\ref{sec:halting-rate}--\ref{sec:busy-beaver}) are devoted to different questions related to the halting problem and its approximate solutions.
In Section~\ref{sec:halting-rate} we consider the fraction of terminating programs among all programs of length at most $n$. We prove that this fraction has no limit as $n\to\infty$, and limit points are Martin-L\"of random reals (even relative to $\mathbf{0}'$). We prove that the $\limsup$ of this fraction is an upper semicomputable $\mathbf{0}'$-random number and every number of this type can appear as $\limsup$ for some machine (Theorem~\ref{th:classification}). In Section~\ref{sec:mass} we consider the task ``find an approximate solution of the halting problem'' as a mass problem in Medvedev's sense, and study whether different versions of this problem can be solved with positive probability by a randomized algorithm, obtaining both positive and negative results for different versions. In Section~\ref{sec:busy-beaver} we consider the following question: how long we need to wait until all terminating programs of size at most $n$, except for a given fraction, do terminate. This question has a natural answer in terms of the busy beaver function, and it can be easily proven using Kolmogorov complexity.

Finally, in the last section (Section~\ref{sec:appendix}) we discuss some generalizations of the previous results.

We assume that the reader has some background in computability theory, algorithmic randomness and Kolmogorov complexity (see, e.g.,~\cite{DowneyH2010,Nies2009,LV2007,VUS2013}). We denote by $\KS$ plain Kolmogorov complexity and by $\KP$ prefix-free Kolmogorov complexity.  

\section{Optimal and effectively optimal machines}\label{sec:optimality} \label{sec:opt-and-eff-opt}

\subsection{Definitions}
The halting problem is described in different ways in different textbooks. Sometimes one considers the diagonal function $\varphi_x(x)$,  i.e., asks whether a program terminates on itself. One can also ask whether a given program terminates on a given input, or whether a given program without input terminates. The latter version looks most suitable for us. Indeed, the diagonal function is considered mostly for historical reasons (Cantor's diagonal argument and  first proofs of undecidability; in these arguments we construct a function that differs from $\varphi_x$ somewhere, and use $x$ as the difference point, but any other sequence of difference points could be used as well). For the second version (when we ask whether $\varphi_x(y)$ is defined) we need to combine $x$ and $y$ into some input pair and measure the size of this pair, if we want to ask about the fraction of correct answers for inputs of given size --- and in this way we get a function of one argument anyway.

So we consider the halting problem for programs without inputs. Then the semantic of the programming language is described by an interpreter machine (algorithm, computable function)~$U$; its inputs and outputs are binary strings. Its inputs are called `programs' and $U(p)$ is the output of program $p$ (undefined if $p$ never terminates).  We use the name \emph{machine} for partial computable functions whose arguments and values are binary strings, and put the following restrictions.

\begin{defi}
A machine $U$ is \emph{universal} if every other machine $V$ is reducible to $U$ in the following sense: there exists a total computable function $h$ such that $V(x)=U(h(x))$ for every $x$ \textup(either both sides are undefined or they are defined and equal\textup).
\end{defi}

Informally, this means that every other programming language $V$ can be effectively translated into $U$ (and $h$ is the translator). In the following definition we additionally require that the length of programs does not increase significantly during the translation. We say that a (total) function $h$ whose arguments and values are binary strings is \emph{length-bounded} if it increases the length at most by a constant, i.e., $|h(x)|\le |x|+c$ for some $c$ and all $x$.

\begin{defi}
A machine $U$ is \emph{effectively optimal} if every other machine $V$ is reducible to $U$ by a total computable length-bounded function $h$.
\end{defi}

\begin{rem}
Effective optimality is an obvious strengthening of optimality (see next definition). It is important that the function $h$ in the definition of an effective optimal machine is total: if we allowed the function~$h$ to be partial computable, the definition would be equivalent to that of optimal machine. Indeed, if $U$ is optimal with some $O(1)$-constant $c$, we can compute $h(x)$ by effectively searching for an $U$-description of $V(x)$ having length at most $|x|+c$.
\end{rem}

A weaker requirement is used in the definition of Kolmogorov complexity where for each machine $U$ we consider the complexity function $\KS_U(x)=\min\{|p|\colon U(p)=x\}$. 

\begin{defi}
A machine $U$ is \emph{optimal} if the complexity function $\KS_U$ is minimal up to $O(1)$ additive term: for every other machine $V$ there exists $c$ such that $\KS_U(x)\le\KS_V(x)+c$ for all~$x$.
\end{defi}

It is easy to see that effectively optimal machines exist: take some universal function $\Phi(p,x)$ such that every machine appears among the functions $\varphi_p(x)=\Phi(p,x)$, and then let $U(\langle p\rangle x)=\Phi(p,x)$ where $\langle p\rangle$ is some self-delimited encoding of $p$ (say, all bits are doubled and $01$ is added at the end). It is also easy to see that every effectively optimal machine is optimal. The function $\KS_U$ for some fixed optimal $U$ is called \emph{Kolmogorov complexity}.

The notion of an optimal machine was introduced and used by Solomonoff and Kolmogorov (see \cite{LV2007} for the historical account); the notion of an effectively optimal machine was introduced in~\cite{Schnorr1974}.  In this paper Schnorr used the name ``optimal enumeration''; we use the name ``effectively optimal'' as a reminder that this is an effectivization of the optimality requirement used by Solomonoff and Kolmogorov.

\begin{rem}\label{rem:effective}
One can also require that the translator function $h$ in the definition of an effectively optimal machine can be found effectively given a machine $V$. This does not give a stronger notion, however: every effectively optimal machine has this property. Indeed, the construction of an effectively optimal machine $U$ given above guarantees this property if $\Phi$ is a G\"odel universal function; then $U$ can be reduced to arbitrary effectively optimal machine $U'$, and this reduction guarantees the same property for $U'$.
\end{rem}

Schnorr noted in~\cite{Schnorr1974} that there exist optimal machines that are not effectively optimal. For example, the machine that keeps the last bit unchanged may be optimal (e.g., one may apply an optimal machine to the preceding bits) but cannot be effectively optimal and even universal. If it were, the translator function could be used to separate the computations that give output~$0$ and output~$1$ (the standard inseparable sets). There are many other examples of optimal but not effectively optimal machines. For example, one may consider an optimal machine where the empty string $\Lambda$ has unique preimage $\Lambda$ (all other programs returning $\Lambda$ are suppressed). An interesting class of optimal machines that cannot be effectively optimal is given in the following proposition.

\begin{defi}
A machine is called \emph{left-total} if for every $n$ the $n$-bit strings in its domain form an initial segment in the lexicographic ordering. 
\end{defi}

\begin{prop}\label{prop:left-total}
The exists a left-total optimal machine but no left-total machine can be effectively optimal.
\end{prop}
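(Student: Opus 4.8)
The plan is to treat the two halves separately: the first by a direct ``compression'' construction, the second by a reduction from computable inseparability.

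\medskip
For the existence of a left-total optimal machine I would ``compress from the left'' a reference optimal machine $U_{0}$. Enumerate $\dom U_{0}$ as the computations terminate one by one, and keep, for every length $n$, a counter $\gamma_{n}$ initialised to $0$; whenever a program $p$ with $|p|=n$ is found in $\dom U_{0}$ (so $U_{0}(p)$ has halted with some value $y$), let $q$ be the string of length $n$ of lexicographic rank $\gamma_{n}$, set $U(q):=y$, and increment $\gamma_{n}$. The resulting $U$ is a partial computable function (to evaluate $U(q)$ with $|q|=n$, run the enumeration until $\gamma_{n}$ reaches $\mathrm{rank}(q)+1$); by construction its domain meets every length in an initial segment of the lexicographic order, so $U$ is left-total; and each $U_{0}$-program of length $n$ for a string $x$ yields a $U$-program of length $n$ for $x$, so $\KS_{U}\le\KS_{U_{0}}\le\KS+O(1)$ and $U$ is optimal.

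\medskip
For the second half I would argue by contradiction, exploiting the phenomenon already noted in the text for the ``last bit'' machine: a suitable translator makes a pair of computably inseparable c.e. sets computably separable. Assume $U$ is left-total and effectively optimal, and fix disjoint computably inseparable c.e. sets $A,B$ together with a partial computable $\psi$ with $\dom\psi=A\sqcup B$, $\psi|_{A}\equiv 0$, $\psi|_{B}\equiv 1$. I would then build a machine $V$ that is itself left-total, injective on its domain (each halting input is its own output), and encodes $\psi$ \emph{redundantly}: fix a fast-growing computable sequence of distinct ``slot lengths'' $\ell(0)<\ell(1)<\cdots$; at every length that is not a slot length let $V$ halt on all of $\{0,1\}^{n}$; at the slot length $\ell=\ell(e)$ keep $\dom V\cap\{0,1\}^{\ell}$ equal to the initial segment $\{0,1\}^{\ell}\setminus\{1^{\ell}\}$ until $\psi(e)$ converges, and adjoin $1^{\ell}$ exactly when $\psi(e)\downarrow=1$. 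Then $V$ is a genuine machine, so by effective optimality $V=U\circ h$ for a total computable length-bounded $h$, and since $V$ is injective so is $h$. For each $e$ the $2^{\ell(e)}-1$ strings of $\{0,1\}^{\ell(e)}$ other than $1^{\ell(e)}$ always lie in $\dom V$, so their $h$-images lie in $\dom U$, while $h(1^{\ell(e)})\in\dom U$ iff $e\in B$. Since $U$ is left-total, membership of a length-$m$ string in $\dom U$ depends only, and monotonically, on its rank through the threshold $t_{m}=|\dom U\cap\{0,1\}^{m}|$; the key lemma to establish is that the length bound together with the completely filled blocks of $\dom V$ below a slot force the $h$-images of those blocks to exhaust $\dom U\cap\{0,1\}^{m}$ at the length $m=|h(1^{\ell(e)})|$. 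Granting this, the rank of $h(1^{\ell(e)})$ can be compared computably against $t_{m}$, and the outcome decides ``$e\in B$'' for every $e\in A\cup B$, producing a computable total extension of $\psi$ and contradicting the inseparability of $A$ and $B$. (A variant of the same argument would instead show directly that $\dom U$ is decidable, which already contradicts the universality of effectively optimal machines.)

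\medskip
The main obstacle is exactly this saturation step, and it is the only place where both hypotheses are used at once. A priori the translator $h$ is free to scatter the images of the filled blocks over many lengths, none of them equal to $|h(1^{\ell(e)})|$, in which case left-totality of $U$ says nothing about $h(1^{\ell(e)})$. So the argument hinges on choosing the domain pattern of $V$ carefully enough — how many, and which, lengths below each slot are filled, and how fast $\ell(e)$ grows — that injectivity plus the $O(1)$ length bound leave $h$ essentially no room: a counting argument must force the $h$-images of the filled lengths $n, n+1, \dots$ just below a slot to cover $\dom U$ up to rank $t_{m}$ at every length $m$ in the relevant window, so the single ``question'' string $1^{\ell(e)}$ cannot be mapped into a hole. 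Making this work uniformly in $e$ while keeping $V$ a legitimate left-total machine is the technical heart of the proof; once it is in place, the separation of $A$ and $B$ is immediate.
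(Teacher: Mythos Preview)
Your existence argument is correct and is exactly the construction the paper gives.

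The second half, however, has a genuine gap that cannot be closed along the lines you sketch. The ``saturation'' lemma you grant yourself is false. Even with $V$ filling every non-slot length completely, an injective length-bounded $h$ with constant $c$ sends $\dom V\cap\{0,1\}^{\le n}$ (about $2^{n+1}$ strings) into $\{0,1\}^{\le n+c}$ (about $2^{n+c+1}$ strings), so the image occupies only a $2^{-c}$ fraction of the target range. Meanwhile $\dom U$, being the domain of a universal machine, contains vastly more than the $h$-image of your particular $V$: short $U$-programs for every partial computable function, not just the identity-like $V$. Hence at the length $m=|h(1^{\ell(e)})|$ there is no reason whatsoever for $\dom U\cap\{0,1\}^{m}$ to be exhausted, or even approximately covered, by the $h$-images of your filled blocks; the threshold $t_{m}$ stays just as uncomputable as before. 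No growth rate on $\ell(e)$ and no padding of $V$ can overcome this cardinality mismatch, because the factor $2^{c}$ slack is fixed once $h$ is.

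The paper's argument sidesteps this entirely with a short application of the recursion theorem for enumerable sets. Given a left-total, effectively optimal $U$, one shows that from any c.e.\ set $W$ one can \emph{effectively} produce a c.e.\ set $W'\ne W$, contradicting the fixed-point theorem. Take any machine $V$ with $\dom V=W$ and its length-bounded translator $h$ into $U$ (which can be found effectively from an index for $V$). Since $h$ is length-bounded, pigeonhole gives two strings $u\ne v$ with $|h(u)|=|h(v)|$; say $h(u)$ precedes $h(v)$ lexicographically. Left-totality of $U$ then yields the implication $v\in W\Rightarrow u\in W$, so $W$ can never equal the singleton $\{v\}$; output $W'=\{v\}$. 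That is the whole proof: one pigeonhole and one monotonicity observation, with no need to control where $h$ sends anything else.
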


\begin{proof}
Every machine $U$ can be converted to a left-total machine $U'$ without changing the complexity function: when a new string~$p$ of length~$n$ appears in the domain of $U$, we add to the domain of $U'$ the lexicographically-least string $q$ of length~$n$ which is not in this domain, and set $U'(q)=U(p)$. To prove that a left-total machine $U$ cannot be effectively optimal, let us assume that $U$ is left-total and effectively optimal and show that in this case for a given enumerable\footnote{For brevity we use the name `enumerable' for recursively (computably) enumerable sets.} set $W$ one can effectively construct a different enumerable set $W'$, thus getting a contradiction with the fixed point theorem for enumerable sets. Indeed, the set $W$ is a domain of some function $V$ that is reducible to $U$, so $V(x)=U(h(x))$ for some length-bounded computable total function $h$. Since $h$ is length-bounded, we can find two strings $u$ and $v$ such that $h(u)$ and $h(v)$ have the same length. If, say, $h(u)\le h(v)$ in the lexicographic ordering, we know that $v\in W$ implies $u\in W$, so the set $W'=\{v\}$ is guaranteed to be different from $W$. Note that we used Remark~\ref{rem:effective} in this argument, since the transformation of $W$ into $W'$ should be effective to get a contradiction with the fixed-point theorem. 
\end{proof}

Here are two other examples of optimal machines that cannot be effectively optimal:

\begin{prop}\hfill
\begin{enumerate}[label=\({\alph*}]

%\textup{(a)}
\item There exists an optimal machine whose domain is a simple set \textup(in Post's sense\textup{:} the complement is infinite but does not contain an infinite enumerable set\textup{);} such a machine cannot be effectively optimal.

%\textup{(b)}
\item There exists an optimal machine such that $U(p)\ne U(p')$ for
  every two different strings $p,p'$ of the same length; such a
  machine cannot be effectively optimal.
\end{enumerate}
\end{prop}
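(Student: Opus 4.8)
The plan is to obtain both optimal machines by the same bookkeeping applied to a fixed optimal machine~$R$, and to prove the two non‑optimality statements by different routes — the first from the structure of $m$‑complete sets, the second from the fixed‑point theorem, exactly as in the proof of Proposition~\ref{prop:left-total}. For the constructions I would dovetail an enumeration of $\dom R$: whenever a string $p$ enters $\dom R$ with $R(p)=x$, I pick a string $q$ of length $|p|+1$ that has not yet been put into $\dom U$ and set $U(q):=x$. Since the number of $R$‑programs of length $\ell-1$ is at most $2^{\ell-1}$, at most half of the strings of each length $\ell$ are ever used this way, so a fresh $q$ always exists; and as $R$ is optimal, $\KS_U(x)\le\KS_R(x)+1$, so $U$ is optimal. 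For statement~(b) I additionally skip this action whenever some $q'$ with $U(q')=x$ and $|q'|\le|p|+1$ already exists; the invariant ``different strings of the same length have different $U$‑values'' is then maintained automatically, because a clash would be a string $q'$ with $U(q')=x$ and $|q'|=|p|+1$, which is exactly what the skip condition forbids.

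For statement~(a) I would, in parallel, meet Post‑style requirements $P_e$: dovetailing the enumerations of all $W_e$, whenever an element $w$ with $|w|\ge 2(e{+}1)$ appears in $W_e$ and $P_e$ is not yet satisfied, I put $w$ into $\dom U$ (setting $U(w):=\Lambda$ if $w$ is still fresh). These actions consume at most $\ell/2$ strings of each length $\ell$, so together with the optimality action strictly fewer than $2^\ell$ strings of each length $\ell$ are ever placed in $\dom U$; hence $\overline{\dom U}$ is infinite, while every infinite $W_e$ meets $\dom U$, so $\dom U$ is simple. Such a $U$ cannot be effectively optimal: an effectively optimal machine is universal, and for any universal $U$ and any machine $V$ the identity $\dom V=h^{-1}(\dom U)$ (with $V=U\circ h$, $h$ total computable) exhibits $\dom U$ as an $m$‑complete set; but an $m$‑complete c.e.\ set is creative, so its complement is productive and contains an infinite enumerable set (see, e.g.,~\cite{Rogers}), contradicting simplicity.

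For statement~(b), ``not effectively optimal'' is proved like Proposition~\ref{prop:left-total}, by contradicting the fixed‑point theorem for enumerable sets. Given a c.e.\ set $W_e$, let $V_e$ be the machine with domain $W_e$ and constant value $\Lambda$ on it; by Remark~\ref{rem:effective} one effectively finds a total length‑bounded $h$ with $V_e=U\circ h$. Because $h$ is length‑bounded, a finite search produces two distinct strings $u\ne v$ with $|h(u)|=|h(v)|$. If $h(u)=h(v)$ then $u\in W_e\iff v\in W_e$, so $\{u\}\ne W_e$; if $h(u)\ne h(v)$ then $u$ and $v$ cannot both lie in $W_e$, since that would give $U(h(u))=U(h(v))=\Lambda$ with $h(u),h(v)$ distinct strings of equal length, contradicting $U(p)\ne U(p')$ for equal‑length $p\ne p'$, so $\{u,v\}\ne W_e$. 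In both cases we have produced, effectively from $e$, an enumerable set different from $W_e$, which is impossible.

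The part I expect to be most delicate is the counting in the constructions: checking that at every length enough fresh strings remain so that both the optimality action and the Post‑style (or skip) actions can always be carried out, while $\overline{\dom U}$ still stays infinite. For~(b) the other thing to get right is the choice of machine fed into the fixed‑point argument — it must be the \emph{constant} machine on $W_e$, so that length‑injectivity of $U$ can be invoked precisely in the case $h(u)\ne h(v)$.
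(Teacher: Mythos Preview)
Your proposal is correct. For (a), your construction and the paper's differ only in bookkeeping: the paper starts from an optimal machine whose domain already has half-density at each length and uses a ``seat reassignment'' trick to absorb the Post witnesses without changing the complexity function, whereas you shift all $R$-descriptions up by one length and let the Post witnesses occupy separate fresh positions. The non-effectiveness argument ($m$-complete sets cannot be simple) is identical.

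For (b), the constructions are again close variants, but your proof that such a $U$ cannot be effectively optimal is genuinely different from the paper's. You reuse the template of Proposition~\ref{prop:left-total}: feed the \emph{constant} machine on $W_e$ through the reduction, find $u\ne v$ with $|h(u)|=|h(v)|$, and output either $\{u\}$ or $\{u,v\}$ to defeat the fixed-point theorem for enumerable sets. The paper instead reduces to $U$ a machine built from a pair $P,Q$ of recursively inseparable sets of \emph{lengths} --- constant-valued on lengths in $P$, the identity on lengths in $Q$ --- and observes that the number of distinct values $h$ takes on $n$-bit strings then separates $P$ from $Q$ (at most $n+O(1)$ values if $n\in P$, exactly $2^n$ if $n\in Q$). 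Your argument is more self-contained and pleasingly uniform with the left-total case; the paper's argument avoids the recursion theorem but imports the inseparable-sets machinery instead.
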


\begin{proof}\hfill
\begin{enumerate}[label=\({\alph*}]
\item To prove the existence of optimal machines with simple domains, consider first an optimal  machine $U$ whose domain contains at most half of the $n$-bit strings for each~$n$. It can easily be constructed, say, by appending a zero to all arguments. Now we modify this machine by extending its domain, and get a machine $V$ with the same complexity function and simple domain. To make the domain simple, for every enumerable set $W_n$ that contains strings of length greater than $n$, we add one of these strings to the domain of $U$. This is done in the following way. 

We simulate $U$ for all arguments and enumerate all $W_n$ in parallel. When we discover that $U(p)=x$ for some $p$ that is not in the domain of $V$ (yet), we let $V(p)=x$ as well. When some string $x$ of length greater than $n$ appears in $W_n$, we look whether $V$ is already defined on this string. If yes, we do nothing and forget about $W_n$. If not, we add this string to the domain of $V$ with some nonsense value (say, the empty string) and again forget about $W_n$. The only problem arises when $U$ becomes defined on some element that was earlier added to the domain of $V$ with nonsense value. Then, like a person in a concert hall whose seat is already occupied, the value $U(p)$ is assigned to some free seat, i.e., to some unused argument of the same length. In this way the complexity function does not increase. Of course, later the legal owner of this new seat may arrive; then she is sent to some free seat of the same length, etc. Our assumption guarantees that we do not run out of places, since for strings of length $N$ only $W_1,\ldots,W_{N-1}$ could require seating at length $N$, and we have $2^{N-1}$ free seats. This ends the existence proof. It remains to note that the domain of a universal machine is $m$-complete and therefore cannot be simple (see, e.g.,~\cite{Rogers}).

\item We may use the construction from the proof of Proposition~\ref{prop:left-total}  but omit the values that already appeared among the arguments of the same length. To show that a machine $U$ with this property cannot be effectively optimal, consider the function $U'$ defined as follows:
$$
  U'(x)=\begin{cases}
               0^{|x|}, \text{ if $|x|\in P$ };\\
                x, \text{ if $|x| \in Q$};\\
                \text{undefined otherwise.}
            \end{cases}
$$
where $P$ and $Q$ are enumerable inseparable sets of natural numbers
(lengths).  If $h$ is a length-bounded computable total function that
reduces $U'$ to $U$, we can separate $P$ and $Q$ using the following
observation: if $n\in Q$, then $h(x)$ should be different for all $x$
of length $n$, and if $n\in P$, then all $h(x)$ are descriptions of
the same string of length at most $n+O(1)$, so there is at most
$n+O(1)$ different possible values for $h(x)$.\qedhere
\end{enumerate}
\end{proof}

\noindent We finish this section with one last observation. One can consider the following property of a machine $U$ that looks weaker than effective optimality: \emph{for every machine $V$ there exists a total length-bounded computable function $h$ such that $U(h(x))$ is equal to $V(x)$ if $V(x)$ is defined, and $U(h(x))$ may be arbitrary if $V(x)$ is undefined}. In other words, we allow the translation of a non-terminating $V$\!-program to be a terminating $U$\!-program; note that this is enough to conclude that $\KS_U(x)\le \KS_V(x)+O(1)$. In fact, this property is not weaker at all:

\begin{prop}\label{prop:effective}
Every machine $U$ with this property is effectively optimal.
\end{prop}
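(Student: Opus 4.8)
Fix a machine $V$ satisfying the hypothesis of the proposition for $U$; I must produce a total computable length-bounded $h$ with $U(h(x))=V(x)$ for every $x$ (both undefined, or both defined and equal). The hypothesis immediately gives a total length-bounded $h_0$ with $U(h_0(x))=V(x)$ whenever $V(x)\downarrow$, so the only defect to repair is \emph{over-halting}: $U(h_0(x))$ may converge (to garbage) while $V(x)\uparrow$. Removing this defect is the entire content of the proof, and it cannot be done by inspecting the behaviour of $U\circ h_0$, since the "garbage" is genuinely arbitrary and can mimic any legitimate output.

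The device I would use is a \emph{time stamp plus audit}. Replace $V$ by $V'$ with $V'(x)=\langle x,t,V(x)\rangle$, where $t$ is the running time of $V$ on $x$ (so $\dom V'=\dom V$), and apply the hypothesis to $V'$, obtaining a total length-bounded $h_0$ with $U(h_0(x))=\langle x,t,V(x)\rangle$ whenever $V(x)\downarrow$. A claimed value $\langle x,t,w\rangle$ can then be \emph{audited}: re-run $V$ on $x$ for $t$ steps and check that it halts with output $w$; a divergent computation passes no audit. Wrapping $U\circ h_0$ in such an audit therefore kills over-halting in principle, and optimality of $U$ (already noted in the excerpt, $\KS_U\le\KS_{V'}+O(1)$) guarantees, for each $x\in\dom V$, a $U$-program of length $\le|x|+O(1)$ for $\langle x,t,V(x)\rangle$, which is what keeps the final reduction length-bounded — this is also why the input $x$ is folded into the time-stamped output.

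The difficulty, and the step I expect to be the real obstacle, is that the translator $h$ being built must stay \emph{total}, so it cannot run $V$ itself; the audit must therefore be performed \emph{inside} $U$, i.e.\ the auditing machine has to be translated to $U$ — and that translation over-halts again, an infinite regress. I would close the loop with the recursion (fixed-point) theorem, together with the observation — obtained by applying the hypothesis to a G\"odel universal machine, in the spirit of Remark~\ref{rem:effective} and of the use of the fixed-point theorem in the proof of Proposition~\ref{prop:left-total} — that weak translations to $U$ can be produced uniformly and effectively: build by the recursion theorem a machine that knows its own uniform weak translator, runs it, audits the resulting time-stamped output against $V$, and outputs accordingly, so that the self-reference forces the machine's divergence to coincide with that of $V$ and hence forces its own weak translator to be an exact, length-bounded reduction; composing with the short $U$-programs given by optimality yields $h$. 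The delicate point is keeping this fixed point "alive": a naive construction collapses to the nowhere-defined machine, since nothing \emph{a priori} forces the self-translator to converge where $V$ does, so one must inject the independently obtained weak translation of $V'$ (or the short programs from optimality of $U$) into the construction to pin the fixed-point machine to $V$ on $\dom V$ while keeping it undefined elsewhere, and verify that this is compatible with the length bound.
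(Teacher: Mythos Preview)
You have the right scaffolding: uniform effective production of semi-translators (the analogue of Remark~\ref{rem:effective}) plus the Kleene fixed-point theorem. But the time-stamp/audit mechanism does not close the gap you yourself flag, and the paper's proof shows it is also unnecessary.

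Here is the difficulty with your plan. Suppose you build, via the fixed-point theorem, a machine $W$ that knows its own semi-translator $g$, runs $U(g(x))$, audits the alleged time-stamp against a direct run of $V$, and (with your ``injection'') also races $V(x)$ in parallel so that $W$ is defined wherever $V$ is. You can indeed arrange $\dom W=\dom V$ this way. But this tells you \emph{nothing} about $U(g(x))$ on inputs where $W(x)$ is undefined: the semi-translator property is silent there, so $U(g(x))$ may still converge to garbage, and $g$ is not a translator of $V$. The audit constrains what $W$ does, not what $U\circ g$ does outside $\dom W$. Your phrase ``forces its own weak translator to be an exact reduction'' is exactly the step that does not follow.

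The paper's move is to exploit that silence by \emph{diagonalisation} rather than verification. Build (via the fixed-point theorem, knowing its own semi-translator $h$) a machine $V'$ that on input $x$ races $V(x)$ against $U(h(x))$: if $V(x)$ halts, output $V(x)$; if $U(h(x))$ halts first, output something \emph{different} from $U(h(x))$. The second branch can never actually fire, since if it did then $V'(x)$ would be defined and unequal to $U(h(x))$, contradicting that $h$ semi-translates $V'$. Hence $V'=V$, and $U(h(x))$ must diverge whenever $V(x)$ does (otherwise the forbidden branch would fire). So $h$ is already an exact length-bounded translator of $V$.

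If you patch your construction by having $W$ output something different from $U(g(x))$ when the audit fails, it works --- but that \emph{is} the diagonalisation, and once you have it the time-stamps and audits are dead weight: you never need to inspect the content of $U(h(x))$, you only need to threaten to contradict it.
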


\begin{proof}
Let $V$ be some other machine for which we want to find a length-bounded translator required by the definition of effective optimality. We know that for every machine $V'$ there exist a ``semi-translator'' of $V'$ to $U$, i.e., a length-bounded total computable function with the property described above. Similarly to Remark~\ref{rem:effective}, we may assume that $h$ can be found effectively given $V'$. Let us consider the following machine $V'$; using the fixed-point theorem, we may assume that $V'$ knows the semi-translator $h$ of $V'$ to $U$:
$$
V'(x)=
\begin{cases}
\text{$V(x)$, if $V(x)$ is defined};\\ 
\text{something different from $U(h(x))$, if $U(h(x))$ is defined};\\
\text{undefined otherwise}.
\end{cases}
$$
This definition is understood as follows: On a given input $x$, $V'$ computes $V(x)$ and $U(h(x))$ in parallel until one of the two computations terminates; then the first or the second line is applied (and we do not care whether the other computation terminates, too).

In fact, the second line is never used, since in this case $V'(x)$ is defined and is different from $U(h(x))$, so $h$ is not a semi-translator. So $V'$ is the same function as~$V$ and $U(h(x))$ is undefined if $V'(x)=V(x)$ is undefined, so $h$ is not only a semi-translator for $V$ but also a translator.
\end{proof}

\subsection{Domains of optimal and effectively optimal machines}

In the sequel we consider the halting problem for the domains of optimal and effectively optimal machines (for most results optimality is enough, but not for all, as we will see).  This motivates the following question: which (enumerable) sets are domains of optimal and effectively optimal machines? 
 
The answer to the first question was given in~\cite{CNSS}; for the reader's convenience we reproduce the proof here.

\begin{thm}[\cite{CNSS}]\label{th:CNSS}
A set $S$ is a domain of an optimal machine if and only if the Kolmogorov complexity of the number of strings of length at most $n$ in $S$ is $n-O(1)$\textup: $$\KS(\#\{x\colon (|x|\le n) \land x\in S\}) =  n-O(1).\eqno(*)$$
\end{thm}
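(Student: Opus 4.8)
The plan is to prove the theorem by establishing both implications, using the counting function $c_S(n) = \#\{x\colon |x|\le n \land x\in S\}$ as the central object. Note that $c_S$ is a non-decreasing function with $c_S(n) \le 2^{n+1}$, and it is lower semicomputable if $S$ is enumerable: we can approximate $c_S(n)$ from below by enumerating $S$. For the ``only if'' direction, I would start from an optimal machine $U$ with domain $S$ and show $\KS(c_S(n)) = n - O(1)$. The lower bound $\KS(c_S(n)) \ge n - O(1)$ should follow from a counting argument: if $\KS(c_S(n))$ were much smaller than $n$ for infinitely many $n$, then knowing $c_S(n)$ (cheaply) lets us enumerate $S$ up to length $n$ until we have found exactly $c_S(n)$ strings, at which point we know the \emph{entire} set $S_{\le n}$; but $S_{\le n}$ contains $c_S(n) \ge 2^{n} - O(\text{something})$ strings (since $U$ is optimal its domain must be large — almost all strings of length $n$, up to a constant factor, must be programs, otherwise complexity would be too high), and we could use the lexicographically largest non-program of length $n$, or the set $S_{\le n}$ itself as a ``list of $c_S(n)$ incompressible-in-bulk strings,'' to derive a contradiction. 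The upper bound $\KS(c_S(n)) \le n + O(1)$ is immediate since $c_S(n) < 2^{n+1}$, so $c_S(n)$ is described by roughly $n+1$ bits.

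**The ``if'' direction.**
For the converse, assume $(*)$ holds for an enumerable set $S$; I must build an optimal machine with domain exactly $S$. The idea is to take a reference optimal machine $U_0$ (with a sparse domain, say contained in $0\{0,1\}^*$ so that plenty of room is free) and ``spread'' its descriptions onto $S$. Concretely, since $\KS(c_S(n)) \ge n - O(1)$, the set $S$ must contain at least $2^{n} \cdot 2^{-O(1)}$ strings of length $\le n$ — hence $S$ is ``dense enough'' to host short descriptions. I would enumerate $S$ and, in parallel, enumerate the graph of $U_0$; whenever $U_0(p)=x$ is discovered with $|p| = k$, I assign the value $x$ to some as-yet-unused string in $S$ of length $\le k + O(1)$, which is possible because $S$ has enough room at every length (this is exactly where $(*)$, via the lower bound on $c_S$, is used). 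Strings of $S$ that never get a ``real'' assignment get some junk value. This produces a machine $V$ with $\dom V = S$ and $\KS_V \le \KS_{U_0} + O(1)$, i.e., $V$ is optimal.

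**The main obstacle.**
The delicate point — and the step I expect to be hardest — is the bookkeeping in the ``if'' direction: ensuring that the greedy assignment of $U_0$-descriptions into $S$ never runs out of space at any length, uniformly. One has to be careful that the constant in $|h(x)| \le |x| + c$ matches the constant hidden in $c_S(n) \ge 2^{n-c'}$, and that the ``seat-reassignment'' trick (reminiscent of the argument in Proposition on simple domains above, where a displaced value is moved to a free seat of the same or slightly larger length) terminates and keeps the complexity bound. The converse inequality needed for optimality of $V$ (namely $\KS_{U_0} \le \KS_V + O(1)$) is automatic since $U_0$ is already optimal, so only one direction of the complexity comparison is at stake. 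I would also need to double-check the ``only if'' lower-bound argument: the precise claim is that an optimal machine's domain, restricted to length-$\le n$ strings, is an enumerable set of size $c_S(n)$ that is ``hard to pin down,'' and turning ``we can eventually list all of $S_{\le n}$'' into a genuine complexity lower bound on $c_S(n)$ requires relating $c_S(n)$ to the complexity of the hardest-to-describe object of length around $n$ — which exists precisely because $U$, being optimal, realizes complexities up to $n+O(1)$.
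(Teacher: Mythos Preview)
Your outline follows the paper's two-direction structure, but both directions have gaps as written.

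\textbf{``Only if'' direction.} The strategy---from a short description of $c_S(n)$, recover $S_{\le n}$, then exhibit a high-complexity string---is correct, but your witness is not. The ``lexicographically largest non-program of length $n$'' is a string outside the \emph{domain} $S$; such a string need not have high complexity (for a left-total optimal machine it is $1^n$, of complexity $O(\log n)$). The correct witness, as in the paper, lies outside the \emph{range} $U(S_{\le n})$: once $S_{\le n}$ is known you can compute $U$ on every element of it, list all strings of $\KS_U$-complexity at most $n$, and output the first $z$ missing from that list; then $\KS_U(z)>n$, contradicting the short description used to find $z$.

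\textbf{``If'' direction --- the main gap.} You invoke $(*)$ only through its density consequence $c_S(n)\ge 2^{n-O(1)}$, claiming this gives ``enough room'' for a greedy seating of $U_0$-descriptions into $S$. But density alone is strictly weaker than $(*)$ and does not suffice: take $S=\{0,1\}^*$, which has $c_S(n)=2^{n+1}-1\ge 2^n$ yet $\KS(c_S(n))=O(\log n)$; any machine with this domain is total, hence has computable complexity function, hence is not optimal. So an argument that uses only the density bound would prove something false. Separately, your construction is not effective as stated: ``strings of $S$ that never get a real assignment'' is not a decidable condition, so you cannot assign junk values on that basis. The paper's argument uses the full complexity hypothesis directly. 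One enumerates $\dom U$ one program at a time; when some $x$ appears, one \emph{suspends} the $U$-enumeration and waits for a fresh $z\in S$ with $|z|\le|x|+c$, then sets $V(z):=U(x)$ (strings of $S$ arriving at other times, or too long, get junk immediately). If waiting ever fails for some $x$ of length $k$, then no further strings of length $\le k+c$ enter $S$ after that point, so $c_S(k+c)$ is computable from $x$ and $c$, giving $\KS(c_S(k+c))\le k+O(\log c)$; hypothesis $(*)$ then forces $c-O(\log c)\le O(1)$, and choosing $c$ large enough rules this out. It is the \emph{complexity} of $c_S(n)$, not its magnitude, that makes the waiting succeed.
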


\noindent Note that we count the strings of length \emph{at most} $n$ in $S$, not \emph{exactly} $n$; it is possible, say, that an optimal machine is undefined on all strings of even length.

\begin{proof}
Let us prove first the ``only if'' part. Let $H_n$ be the cardinality of the set in question (for some optimal machine). Then $\KS(H_n\cnd n)\le \KS(H_n)\le n$ with $O(1)$-precision since $H_n\le 2^{n+1}$. To prove the reverse inequality $\KS(H_n)\ge n-O(1)$, and even the stronger inequality $\KS(H_n\cnd n)\ge n-O(1)$, assume that we have a program $q$ that maps $n$ to $H_n$ and is $d$ bits shorter than $n$. We may take an $O(\log d)$-bit self-delimiting description of $d$ and append $q$ to it; the resulting string allows us to reconstruct $d$, then $q$, then $n=|q|+d$, then $H_n=[q](n)$. (Here by $[q](n)$ we denote the output of program $q$ on input $n$.) Then we find all strings of length at most $n$ in the domain of the optimal machine, and a string $z$ that has no description of size at most $n$. This gives $\KS(z)>n$ and $\KS(z)\le O(\log d)+n-d+O(1)$ at the same time, so $d=O(1)$, and $\KS(H_n\cnd n)\ge n-O(1)$, therefore $\KS(H_n)\ge n-O(1)$.

To prove the other implication, assume that $(*)$ is true. Fix some optimal machine $U$. For some $c$ (to be chosen later) consider the following process. We enumerate the domain of $U$ and the set $S$ in parallel, and construct a new machine $V$. When some element $x$ in the domain of~$U$ appears, we suspend the enumeration of the $U$\!-domain and wait until some new string $z$ of length at most $|x|+c$ appears in $S$.  Then we declare $V(z):=U(x)$, and resume the enumeration of the domain of~$U$ (in parallel with the enumeration of $S$). As soon as some other element $x'$ in the $U$\!-domain appears, we repeat this procedure, wait for $z'$ of length at most $|x'|+c$ in $S$, declare $V(z'):=U(x')$, etc. Note that elements in the enumeration of $S$ may appear when nobody is waiting for them (e.g., if they are too long); in this case we make $V(z)$ defined for those elements $z$ immediately. The value is not important, for example, we may declare $V(z)$ to be the empty string in this case. 

The domain of $V$ is exactly $S$: each element of $S$ is either used for some $U$\!-value or wasted, but $V$ is defined on it anyway. If for some $c$ the waiting process is always successful (for every element in the domain of $U$ some suitable $z$ in $S$ is found), the machine $V$ is optimal. It remains to show that for large enough $c$ we get a contradiction with our complexity assumption, if it is not the case.

Assume that some $x$ of length $k$ in the domain of $U$ was not matched. This means that after~$x$ is enumerated in our process, no element of length at most $k+c$ appears in $S$. So the number of elements of length at most $k+c$ in $S$ can be reconstructed if we know $x$ and $c$; note that $k$ is determined by $x$, so we do not need to specify it separately. The pair $(x,c)$ has complexity at most $k+O(\log c)$, so this number has complexity at most $k+O(\log c)$ too, and our assumption gives $k+O(\log c)\ge k+c-O(1)$. This is possible only if $c$ is small, and this finishes the proof.
\end{proof}

We now characterize the domains of effectively optimal machines. 

\begin{defi}\label{def:m-optimal}
An enumerable set $S$ is called \emph{$m$-optimal} if every enumerable set~$W$ can be reduced to $S$ by some length-bounded total computable function $h$: 
$$
x\in W \Leftrightarrow h(x)\in S.
$$
\end{defi}\medskip

\noindent Obviously, a domain of an effectively optimal machine is an $m$-optimal set, and, as we will prove soon (Corollary~\ref{cor:effective-optimal-domain}), the reverse is also true.

If we omit the words ``length-bounded'' in Definition~\ref{def:m-optimal}, we get the classical notion of a \emph{$m$-complete} set. All $m$-complete sets are computably isomorphic (Myhill theorem, see, e.g.,~\cite{Rogers}):  if $S$ and $S'$ are two $m$-complete sets, there exists a computable bijection that maps $S$ to $S'$. Using the arguments of Schnorr~\cite{Schnorr1974},  we can prove the following result:\footnote{Schnorr did not consider $m$-optimal sets. He proved in~\cite{Schnorr1974} that any two effectively optimal machines are reducible to each other by a computable bijection that is length-bounded in both directions. We can however use essentially the same argument to prove the corresponding result for $m$-optimal sets, so we use the name ``Myhill--Schnorr theorem'' for Theorem~\ref{th:myhill-schnorr}.}

\begin{thm}[``Myhill--Schnorr theorem'']\label{th:myhill-schnorr}
Every two $m$-optimal sets $S$ and $S'$ are length-bounded isomorphic: there exists a computable bijection $h$ that maps $S$ to $S'$ such that both $h$ and $h^{-1}$ are length-bounded \textup(so $|h(x)|=|x|+O(1)$\textup).
\end{thm}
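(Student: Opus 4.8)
The plan is to replay the proof of Myhill's isomorphism theorem and upgrade it so that the resulting bijection does not distort lengths --- this is exactly the refinement Schnorr carried out for effectively optimal machines, and the footnote above already signals that ``essentially the same argument'' works here. First I would unfold the hypotheses: applying $m$-optimality of $S'$ to the enumerable set $W=S$ yields a total computable length-bounded $f$ with $x\in S\Leftrightarrow f(x)\in S'$, and applying $m$-optimality of $S$ to $W=S'$ yields a total computable length-bounded $g$ with $y\in S'\Leftrightarrow g(y)\in S$; fix a constant $c$ with $|f(x)|\le|x|+c$ and $|g(y)|\le|y|+c$ for all arguments. (Note that $m$-optimal sets are in particular $m$-complete, so Myhill's theorem already provides \emph{some} computable bijection carrying $S$ to $S'$; since the whole point is to obtain one with $|h(x)|=|x|+O(1)$, the construction has to be redone with length bookkeeping rather than merely quoted.)

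The bijection $h$ would be built as the union of an increasing chain $h_0\subseteq h_1\subseteq\cdots$ of finite partial injections, processing the strings of $\{0,1\}^*$ in order of non-decreasing length and, while handling a given length, alternating between a ``domain duty'' (forcing the next not-yet-covered string into $\dom h_s$) and a ``range duty'' (forcing it into $\operatorname{ran} h_s$). To discharge a domain duty for a string $x$ one must assign it a value $y\notin\operatorname{ran} h_s$ with $y\in S'\Leftrightarrow x\in S$: the first candidate is $f(x)$, which has the correct $S'$-status by the choice of $f$; if $f(x)$ is already in use, say $f(x)=h_s(x_1)$, then $x$ and $x_1$ have the same $S$-status, and one re-routes along the Cantor--Schr\"oder--Bernstein chain issuing from $x_1$, alternately applying $f$, $g$ and the partial map $h_s$ already constructed, terminating (since $h_s$ is finite) at a free string of the correct $S'$-status. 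Range duties are handled dually, with the roles of $f$ and $g$ interchanged. (There is no need to first replace $f$ and $g$ by \emph{injective} reductions as in the usual formulation, which in the length-bounded setting is not obviously possible; the partial map $h_s$ itself plays the role of the bookkeeping device.) That the resulting $h$ is total, injective, onto, and satisfies $x\in S\Leftrightarrow h(x)\in S'$ is then the routine back-and-forth verification --- totality because every string eventually receives both duties, surjectivity from the range duties, and the reduction property because each $h_s$ is a correct partial isomorphism by construction.

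The one genuinely delicate point --- and the step I expect to be the main obstacle --- is the length control, since a naive re-routing chain could add $c$ to the length at every step. The resolution is to exploit the length-ordered, interleaved bookkeeping: one arranges that after all strings of length at most $n$ have been given both duties, the finite partial injection $h_s$ has $\dom h_s$ and $\operatorname{ran} h_s$ both contained in $\{z:|z|\le n+O(1)\}$; since the number of strings of length at most $n+O(1)$ exceeds the number of those of length at most $n$ only by a fixed constant factor, and $f$ and $g$ each shift length by at most $c$, every re-routing chain invoked while handling the length-$n$ strings stays within length $n+O(1)$ and never exhausts the available free strings. This gives $|h(x)|\le|x|+O(1)$, and the symmetric bookkeeping on the range side gives $|h^{-1}(y)|\le|y|+O(1)$, whence $|h(x)|=|x|+O(1)$. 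Making the counting in this paragraph precise is exactly the place where Schnorr's argument is reused; everything else is the standard Cantor--Schr\"oder--Bernstein bookkeeping.
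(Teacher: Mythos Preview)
Your proposal diverges from the paper at a crucial point, and the divergence introduces a real gap. You explicitly skip the step of replacing the length-bounded reductions $f$, $g$ by \emph{injective} ones, saying this is ``not obviously possible'' in the length-bounded setting. In fact the paper proves precisely this as its first lemma (Lemma~\ref{lem:schnorr-injection}): using the recursion theorem one builds an auxiliary enumerable set $W'$ whose length-bounded reduction $h$ to the $m$-optimal set is \emph{forced} to be injective (any collision $h(k)=h(l)$ with $k<l$ would make $k\in W'$ and $l\notin W'$, contradicting that $h$ is a reduction), and then shows $W'=W$. This fixed-point trick is the genuinely new ingredient beyond classical Myhill, and you have discarded it.

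Without injectivity the back-and-forth is underspecified: when $f(x)$ is already taken you appeal to a ``Cantor--Schr\"oder--Bernstein chain issuing from $x_1$, alternately applying $f$, $g$ and $h_s$'', but such chains rely on injectivity of $f$ and $g$ to avoid revisiting elements and to guarantee arrival at a fresh value of the correct status. More seriously, your length-control paragraph is not a proof. The observation that there are only a constant-factor more strings of length $\le n+O(1)$ than of length $\le n$ says nothing about why the construction actually lands among them: each re-routing step may add $c$ to the length, and you give no bound on the chain length independent of $n$. The paper's combinatorial Lemma~\ref{lem:schnorr} handles this by maintaining the explicit invariant that the current modified reductions $\hat f$, $\hat g$ satisfy $\hat f(i)\le\max\{f(i'):i'\le i\}$ and $\hat g(j)\le\max\{g(j'):j'\le j\}$; this is preserved by an exchange argument (swapping two values that are in reversed order can only decrease the running maxima), and injectivity is used essentially in the case analysis that makes the swaps possible. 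Your counting sketch does not substitute for this invariant, and the phrase ``Making the counting in this paragraph precise is exactly the place where Schnorr's argument is reused'' is deferring the entire content of the proof.
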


\begin{cor}\label{cor:effective-optimal-domain}
Every $m$-optimal set is a domain of an effectively optimal machine.
\end{cor}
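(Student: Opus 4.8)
The plan is to transport a ready-made effectively optimal machine onto $S$ by means of the length-bounded isomorphism just proved. Recall that effectively optimal machines exist (the explicit construction via a universal $\Phi$ was described above), so fix one such machine $U_0$ and set $S_0=\dom U_0$. Since the domain of an effectively optimal machine is $m$-optimal, $S_0$ is $m$-optimal, and $S$ is $m$-optimal by hypothesis. Hence Theorem~\ref{th:myhill-schnorr} applies to the pair $S_0,S$ and gives a computable bijection $\sigma\colon\{0,1\}^*\to\{0,1\}^*$ with $\sigma(S_0)=S$ such that both $\sigma$ and $\sigma^{-1}$ are length-bounded.

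Next I would define $U(y)=U_0(\sigma^{-1}(y))$. This is a machine: $\sigma^{-1}$ is total computable and $U_0$ is partial computable, so $U$ is partial computable on strings. Its domain is $\{y:\sigma^{-1}(y)\in S_0\}=\sigma(S_0)=S$, which is exactly what we want (here it is essential that $\sigma$ is applied in the direction making $\dom U$ \emph{equal} to $S$, not merely in bijection with it).

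Finally I would verify effective optimality. Let $V$ be an arbitrary machine. By effective optimality of $U_0$ there is a total computable length-bounded $g$ with $V(x)=U_0(g(x))$ for all $x$. Then $V(x)=U_0(g(x))=U(\sigma(g(x)))$, and $\sigma\circ g$ is total computable and length-bounded, since $|\sigma(g(x))|\le|g(x)|+c_\sigma\le|x|+c_g+c_\sigma$. Thus every machine $V$ is reducible to $U$ by a total computable length-bounded function, i.e.\ $U$ is effectively optimal, and it has domain $S$. The argument is short precisely because all the difficulty is concentrated in Theorem~\ref{th:myhill-schnorr}; the only points requiring a moment's care are the direction of application of $\sigma$ and the (immediate) fact that length-boundedness is preserved under composition.
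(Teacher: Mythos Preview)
Your proof is correct and follows exactly the approach sketched in the paper: transport an existing effectively optimal machine through the length-bounded bijection provided by the Myhill--Schnorr theorem. The paper compresses this into a single sentence, while you have (correctly) spelled out the verification that the domain of $U$ is $S$ and that $U$ inherits effective optimality via the composition $\sigma\circ g$.
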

Indeed, a computable isomorphism between sets $S$ and $S'$ that is length-bounded in both directions, can be used to convert an effectively optimal machine with domain $S$ into an effectively optimal machine with domain $S'$.

\begin{proof}[Proof of Myhill--Schnorr theorem]
The proof goes in two steps. First (Lemma~\ref{lem:schnorr-injection}) we prove that the reduction in the definition of an $m$-optimal set $S$ can be made injective. Then we use some general combinatorial statement (Lemma~\ref{lem:schnorr}) to get a bidirectional length-bounded bijection from two length-bounded injections.

It is convenient to identify strings (in length-lexicographic ordering) with natural numbers; then length-bounded functions become functions $f\colon \mathbb{N}\to\mathbb{N}$ such that $h(n)=O(n)$.

\begin{lem}\label{lem:schnorr-injection}
Let $S$ be an $m$-optimal set, and let $W$ be an arbitrary enumerable set. Then there exist a computable length-bounded injective total function $h$ such that $x\in W \Leftrightarrow h(x)\in S$ for every $x$.
\end{lem}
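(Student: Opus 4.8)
This is, at bottom, Schnorr's argument, and the plan is to get injectivity by a diagonalization in the spirit of the proof of Proposition~\ref{prop:effective}, using the recursion (fixed‑point) theorem so that the machine under construction ``knows'' its own length‑bounded reduction into $S$. Concretely, I would first treat the case $S=\dom U$ for an \emph{effectively optimal} machine $U$ — Schnorr's original setting — in which, by Remark~\ref{rem:effective}, a total length‑bounded translator of any machine into $U$ can be found effectively from an index of that machine. Write $bx$ for the string obtained by prepending a bit $b$ to $x$ (so $x\mapsto bx$ is an injective length‑bounded encoding).

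Using the recursion theorem I would build a machine $M$ that computes its own total length‑bounded translator $h_0$ into $U$ — hence $M(z)=U(h_0(z))$ for all $z$ and $|h_0(z)|\le|z|+O(1)$ — and behaves as follows. On input $1x$ it runs in parallel: process $\mathrm{P}1$, which enumerates $W$ and, if $x$ appears in $W$, halts with output $0x$; and process $\mathrm{P}2$, which searches (in the length‑lexicographic order) for a string $x'<x$ with $h_0(1x')=h_0(1x)$ and, if one is found, halts with output the one‑bit string $1$. On all inputs not of the form $1x$, $M$ diverges.

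The heart of the proof is the claim that $h_0$ is injective on the codes $\{1x:x\in\{0,1\}^*\}$. I would argue by contradiction: let $i_0$ be the least string with $h_0(1i_0)=h_0(1j)$ for some $j>i_0$, fix such a $j$, and put $q:=h_0(1i_0)=h_0(1j)$. Since $i_0<j$ and $h_0(1i_0)=q$, process $\mathrm{P}2$ halts on $1j$, so $M(1j)$ is defined and lies in $\{0j,1\}$. But $M(1j)=U(h_0(1j))=U(q)=U(h_0(1i_0))=M(1i_0)$, so $M(1i_0)$ is defined too; it cannot be caused by $\mathrm{P}2$, since a witness $x'<i_0$ with $h_0(1x')=q$ would contradict minimality of $i_0$, so $\mathrm{P}1$ halts on $1i_0$, forcing $i_0\in W$ and $M(1i_0)=0i_0$. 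Then $0i_0=M(1i_0)=M(1j)\in\{0j,1\}$, impossible since $i_0\ne j$ and $0i_0\ne1$. Hence the claim holds, $\mathrm{P}2$ never halts, and for every $x$ we get $M(1x){\downarrow}$ iff $x\in W$, i.e. $h_0(1x)\in\dom U=S$ iff $x\in W$; so $h(x):=h_0(1x)$ is total computable, length‑bounded, injective, and reduces $W$ to $S$.

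The step I expect to be the main obstacle is \emph{not} this diagonalization but the passage from the hypothesis that $S$ is merely an $m$-optimal \emph{set} to the situation above: one needs an effectively optimal \emph{machine} whose domain is exactly $S$ (equivalently, a machine $U$ as used here). Naive attempts either lose the equality $\dom U=S$ or already presuppose an injective reduction — the very thing being proved — so this requires care with encodings, for instance routing everything through a single uniform length‑bounded reduction of the c.e. set $\{0^{e}1x : x\in W_e\}$ into $S$; one must also verify that the recursion‑theorem fixed point is consistent with the identity $M(z)=U(h_0(z))$. I would spend most of the effort making this reduction to Schnorr's case precise.
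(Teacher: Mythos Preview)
Your diagonalization for the special case $S=\dom U$ with $U$ effectively optimal is correct, and you have also correctly located the real difficulty: getting from an $m$-optimal \emph{set} $S$ to an effectively optimal \emph{machine} with domain $S$. Unfortunately that passage is exactly Corollary~\ref{cor:effective-optimal-domain}, which in the paper is derived from the Myhill--Schnorr theorem, which in turn rests on the present lemma. So the route you sketch is circular, and the hint about ``routing through a uniform reduction of $\{0^e1x:x\in W_e\}$ into $S$'' does not escape this: that uniform reduction gives you effective length-bounded reductions of c.e.\ sets into $S$, but not an effectively optimal machine whose domain equals $S$.

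The paper's proof avoids the circularity by never asking for such a machine. The only consequence of $h_0(1i_0)=h_0(1j)$ actually needed is the membership equivalence $(1i_0\in\dom M)\Leftrightarrow(1j\in\dom M)$, and \emph{that} follows from $m$-optimality alone. Your argument instead exploits the stronger value equality $M(1i_0)=U(q)=M(1j)$, which is why you need $U$. The fix is to run the recursion theorem at the level of c.e.\ sets rather than machines: knowing (effectively, by the Remark~\ref{rem:effective} argument applied to $S$ via a fixed standard $m$-optimal set) a length-bounded reduction $h$ of the set $W'$ under construction to $S$, put $i\notin W'$ if $h(i)\in\{h(0),\dots,h(i-1)\}$, and otherwise put $i\in W'$ as soon as either $i$ appears in $W$ or $h(i)$ appears among $h(i+1),h(i+2),\dots$. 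A collision $h(k)=h(l)$ with $k<l$ then forces $l\notin W'$ and $k\in W'$, contradicting $(k\in W')\Leftrightarrow(l\in W')$; hence $h$ is injective and $W'=W$. This is essentially your diagonalization with the value-comparison replaced by a domain-comparison, and it works directly for any $m$-optimal $S$.
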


\begin{proof}[Proof of Lemma~\ref{lem:schnorr-injection}]
As before (see Remark~\ref{rem:effective}) we may assume that a length-bounded total computable reduction of an arbitrary enumerable set $W$ to $S$ can be found effectively. Also, as in the proof of Proposition~\ref{prop:effective}, we use the fixed-point theorem and construct an enumerable set~$W'$ assuming that a length-bounded total computable reduction~$h$ of~$W'$ to~$S$ is known. The set $W'$ is defined as follows: to determine whether some number $i$ belongs to $W'$, we check first whether $h(i)$ appears among $h(0), h(1),\ldots,h(i-1)$. If yes, then $i$ does not belong to $W'$. If not, we start two processes in parallel: we enumerate $W$ waiting until $i$ appears in $W$, and we compute $h(i+1), h(i+2),\ldots$ until $h(i)$ appears in this list. If one of these two events happens, we include $i$ into $W'$ (and $i\notin W'$ otherwise).

Let us show that in fact $W'=W$ and $h$ is injective (so the statement of the lemma is true).  Assume that $h(k)=h(l)$ for $k<l$. Then $l\notin W'$ since $h(l)$ appears in the list of previous values of $h$, and $k\in W'$, since $l$ appears in the list of subsequent values of $h$. But this cannot happen, since $h$ reduces $W'$ to $S$ and $h(k)=h(l)$ implies $(k\in W')\Leftrightarrow(l\in W')$. Now we know that $h$ is injective, and then the definition of $W'$ guarantees that $W'=W$. Lemma~\ref{lem:schnorr-injection} is proven.
\end{proof}

Now we switch to the combinatorial part. 

\begin{lem}\label{lem:schnorr}
Consider two total injective functions $f\colon\mathbb{N}_1\to\mathbb{N}_2$ and $g\colon\mathbb{N}_2\to\mathbb{N}_1$ (here $\mathbb{N}_1=\mathbb{N}_2=\mathbb{N}$, and subscripts $1$ and $2$ are used just to show that we consider two copies $\mathbb{N}_1$ on the left and $\mathbb{N}_2$ on the right). Consider the bipartite graph $E\subset \mathbb{N}_1\times\mathbb{N}_2$ that combines edges of type $(x,f(x))$ and of type $(g(y),y)$. There exists a bijection $h\colon\mathbb{N}_1\to\mathbb{N}_2$ with the following properties:
\begin{itemize}
\item the vertices connected by $h$ \textup(i.e., $x$ and $h(x)$\textup) belong to the same connected component of~$E$\textup;
\item $h(i)$ is bounded by $\max\{f(i')\mid i'\le i\}$\textup;
\item $h^{-1}(j)$ is bounded by $\max\{g(j')\mid j'\le j\}$\textup;
\end{itemize}
If $f$ and $g$ are computable, $h$ can be made computable.
\end{lem}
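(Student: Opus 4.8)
\emph{Proposal.} The plan is to exploit the fact that $E$ has maximum degree two. Since $f$ is a function, every $x\in\mathbb{N}_1$ lies on exactly one edge of the form $(x,f(x))$, and since $g$ is injective, $x$ lies on at most one edge of the form $(g(y),y)$; similarly for vertices in $\mathbb{N}_2$. So every vertex has degree $1$ or $2$, the connected components of $E$ are finite cycles, two‑way infinite paths, and rays, and $\mathbb{N}_1$‑ and $\mathbb{N}_2$‑vertices alternate along each of them. In particular, if $C$ is a component and $C^{(1)}=C\cap\mathbb{N}_1$, $C^{(2)}=C\cap\mathbb{N}_2$, then $f$ restricts to an injection $C^{(1)}\to C^{(2)}$ (because $x$ and $f(x)$ are adjacent, hence in the same component) and $g$ restricts to an injection $C^{(2)}\to C^{(1)}$; by the Cantor--Bernstein argument $C^{(1)}$ and $C^{(2)}$ are equinumerous (both finite of the same size, or both countably infinite).

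I would then define $h$ componentwise: on each component $C$, let $h$ be the \emph{order‑preserving} bijection between $C^{(1)}$ and $C^{(2)}$, i.e.\ if the elements of $C^{(1)}$ in increasing order are $i_0<i_1<\dots$ and those of $C^{(2)}$ are $j_0<j_1<\dots$, put $h(i_k)=j_k$. Since the components partition the vertex set $\mathbb{N}_1\sqcup\mathbb{N}_2$ (no vertex is isolated), this yields a bijection $h\colon\mathbb{N}_1\to\mathbb{N}_2$, and by construction $x$ and $h(x)$ always lie in the same component, so the first requirement holds for free.

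The heart of the matter is that this $h$ automatically satisfies the two bounds, by a one‑line counting argument. Fix $i\in\mathbb{N}_1$ with component $C$, and write $i=i_k$ in the notation above, so $h(i)=j_k$. The values $f(i_0),\dots,f(i_k)$ are $k+1$ distinct elements of $C^{(2)}$, so their maximum is at least $j_k$; and since $i_0,\dots,i_k\le i$, that maximum is at most $\max\{f(i')\mid i'\le i\}$. Hence $h(i)=j_k\le\max\{f(i')\mid i'\le i\}$. Running the same argument with $g$ in place of $f$ gives $h^{-1}(j)\le\max\{g(j')\mid j'\le j\}$ for every $j\in\mathbb{N}_2$.

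The genuinely delicate point is the last clause. The $h$ above is in general only limit‑computable: computing the rank of $i$ inside $C^{(1)}$ amounts to deciding which of the finitely many $\mathbb{N}_1$‑vertices below $i$ lie in the component of $i$, and connectivity in $E$ is merely computably enumerable. For computable $f,g$ I would instead build $h$ by a stage construction that never retracts. Maintain a finite partial bijection $p$; at each stage enumerate more edges of $E$, commit the order‑preserving bijection on every component that has by then been completely exposed (these are exactly the cycles), and for the rest extend $p$ by picking the least index $t$ not yet handled — alternating between the $\mathbb{N}_1$‑side (where one must give $t$ a value) and the $\mathbb{N}_2$‑side (where one must give $t$ a preimage) — and matching it to a not‑yet‑used vertex of its currently known component meeting the \emph{computable} constraints $p(t)\le\max\{f(i')\mid i'\le t\}$ and $t\le\max\{g(j')\mid j'\le p(t)\}$ (symmetrically on the other side). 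The counting argument of the previous paragraph guarantees that a legal partner always exists; the real work, and the step I expect to be the main obstacle, is to choose it so that no future stage is ever forced to backtrack — for cycles one simply defers the choice until the whole cycle has been seen, while for infinite components one must verify that the greedy choices always stay extendable to a full bijection respecting the bounds. Once this is in place, each value of $h$ is committed at a finite stage and never altered, so $h$ is computable.
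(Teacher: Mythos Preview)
Your argument for the bare existence claim is correct and is a genuinely different route from the paper's. Defining $h$ as the order-preserving bijection on each component and checking the two bounds by the pigeonhole observation (the $k{+}1$ values $f(i_0),\dots,f(i_k)$ are distinct elements of $C^{(2)}$, hence their maximum is at least $j_k$) is clean and short. The paper does not use this global description at all; it runs a single back-and-forth construction that produces $h$ and handles computability simultaneously. So for the non-effective part your approach is arguably nicer, while the paper's buys effectiveness for free.

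The computability clause, however, is where your proposal has a real gap --- one you yourself flag. The sentence ``the counting argument of the previous paragraph guarantees that a legal partner always exists'' is not justified: that argument concerns the specific order-preserving bijection, not an arbitrary partial commitment $p$. Once some pairs have been fixed (not following the order-preserving pattern), nothing you have said shows that the least unmatched $t$ still has an unused partner in its currently visible component obeying \emph{both} inequalities; ``greedy'' is not yet a strategy, because you have given no invariant that survives each commitment. Deferring on cycles does not help either, since you cannot decide in finite time that a component is \emph{not} a cycle, so the two regimes would have to be interleaved anyway.

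The device the paper supplies, and which is the missing idea, is this: alongside the growing partial $h$ one maintains \emph{modified} injections $\hat f,\hat g$ such that (i) $h$ is contained in both $\hat f$ and $\hat g^{-1}$, (ii) $\hat f,\hat g$ still connect only vertices lying in the same original component, and (iii) the running maxima $\max_{i'\le i}\hat f(i')$ and $\max_{j'\le j}\hat g(j')$ never exceed those of $f$ and $g$. The minimal uncovered $u$ is then matched with $v=\hat f(u)$, which meets the $f$-bound automatically by~(iii); a short case analysis (swap two values of $\hat g$, or failing that two values of $\hat f$) restores (i)--(iii) without increasing the running maxima. Each swap involves an uncovered element strictly larger than $u$, so past commitments are never disturbed --- this is precisely the ``no backtracking'' guarantee you were looking for. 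For effectiveness one cannot decide every branch of the case analysis, but one \emph{can} decide whether some $t<v$ has $\hat g(t)=u$; the remaining undecided swap is encoded lazily into the program for $\hat g$ (or $\hat f$) and carried out if and when that value is later requested.
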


(In our application the functions $f$ and $g$ are reductions between two enumerable sets going in opposite directions, so either all the elements of the connected component belong to the corresponding sets, or all elements of the connected component do not belong to the corresponding sets.)

\begin{proof}[Proof of Lemma~\ref{lem:schnorr}]
We construct $h$ step by step: at each stage we have a finite one-to-one correspondence between two finite subsets of $\mathbb{N}_1$ and $\mathbb{N}_2$. Then a new pair is added to the current $h$, and at the same time we modify $f$ and $g$ in such a way that $\hat f$ and $\hat g$ (the current versions of $f$ and $g$) have the following properties:
\begin{itemize}
\item  $\hat f$ and $\hat g$ are injective;
\item the current $h$ is contained both in $\hat f$ and in $\hat g$, meaning that for every pair $u$ in the current domain of $h$, $h(u)=\hat f(u)$ and for every $v$ in the current co-domain of~$h$, $h^{-1}(v)=\hat g(v)$;
\item $\hat f$ and $\hat g$ connect vertices in the same connected component (in the initial graph), so the connected components of the new graph (for $\hat f$ and $\hat g$) are parts of the connected components of the initial graph;
\item $\hat f(i)$ is bounded by $\max\{f(i')\mid i'\le i\}$ (for the original $f$);
\item $\hat g(j)$ is bounded by $\max\{g(j')\mid j'\le j\}$ (for the original $g$).
\end{itemize}
The last two requirements may be reformulated as follows:  changes in $f$ and $g$ may only decrease the quantities $\max (f(1),\ldots,f(i))$ and $\max(g(1),\ldots,g(j))$ for all $i,j$.  So it is enough to check the non-increase of these two quantities for each change in $\hat f$ and $\hat g$.

At each step we take the \emph{minimal} element on one of the sides that is not covered by current~$h$, and add to $h$ some pair that involves this element. Doing this alternatingly for both sides, we guarantee that the final $h$ is a bijection. How is this done? For example, let $u$ be the minimal left element not covered by current $h$. It is currently connected to some $v=\hat f(u)$.  Then (1)~\emph{$u$ and $v$ are in the same component}, and (2)~\emph{$v$ is not covered by $h$}. Indeed, if one of the endpoints of an $\hat f$- or $\hat g$-edge is covered by $h$, then the other one is covered by $h$, too (since $h$ is included in $\hat f$ and $\hat g$, and both $\hat f$ and $\hat g$ are injective).

\begin{description}
\item[Case 1] $\hat g(v)=u$. This case is simple: we add $(u,v)$ to $h$, leaving $\hat f$ and $\hat g$ unchanged.\

\item[Case 2]: $w=\hat g(v)\ne u$. We would like to let $\hat g(v)$ to be $u$, but need to be careful and consider several cases. Note first that $w>u$ since $w$ is not covered by $h$ and all elements smaller than $u$ are covered ($u$ is minimal).

\begin{figure}[h]
\begin{center}
\includegraphics{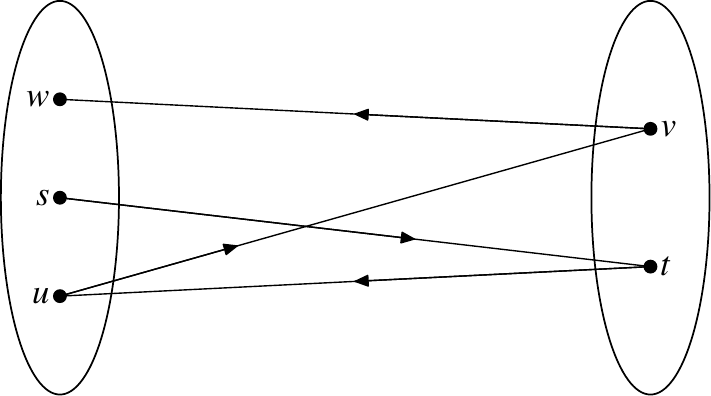}
\end{center}
\end{figure}

\item[Case 2.1] $u$ is not in the image of $\hat g$. Then we let $\hat g(v):=u$, and $g$ is still an injection.  We have decreased some value of $\hat g$ (recall that $w=\hat g(v)$ was greater than $u$), so $\max(\hat g(1),\ldots,\hat g(j))$ could only decrease. Then we proceed as in case 1, i.e., add $(u,v)$ to $h$.

\item[Case 2.2] $u=\hat g(t)$ for some $t$.  Note that $t\ne v$ by injectivity of $\hat g$, and we have four vertices $u,v,w,t$ that are in the same component; none of which is covered by $h$ (for the reasons explained above). We would like to exchange the values of $\hat g(t)=u$ and $\hat g(v)=w$ and let $\hat g(t):=w$, $\hat g(v):=u$, coming again to case 1. The exchange keeps $\hat g$ injective and exchanged values are in the same component. So the only problem is that $\max(\hat g(1),\ldots,\hat g(j))$ should not increase.

\item[Case 2.2.1] $t>v$. In this case we exchange the values that were in the reversed order ($t>v$ but $\hat g(t)=u<w=\hat g(v)$). Such an exchange can only decrease the quantity in question, so we come to case 1 after the exchange and add $(u,v)$ to $h$.

\item[Case 2.2.2] $t<v$. In this case we give up changing $\hat g$ and change $\hat f$ instead. 

\item[Case 2.2.2.1] $t$ is not in the image of $\hat f$. Then we let $\hat f(u):=t$ as in case 2.1, decreasing the value of $\hat f(u)$, and add $(u,t)$ to $h$.

\item[Case 2.2.2.2] $t=\hat f(s)$ for some $s$. Then $s\ne u$ and is not covered by $h$,  therefore $s>u$ since $u$ was minimal element not covered by $h$. All elements $s,t,u,v$ are in the same component, are not covered by $h$, and the values $v=\hat f(u)$ and $t=\hat f(s)$ go in the reversed order, so we exchange them by letting $\hat f(u):=t$ and $\hat f(s):=v$, as in the case 2.2.1.
\end{description}

\noindent This finishes the case analysis.

This construction is not effective: why do we get a computable $h$ if we cannot distinguish between cases where we need to make an exchange or not? This is not a problem, however. Note that we can check whether there exists some $t<v$ such that $g(t)=u$. Depending on this, we either perform the exchange for the values of $g$, or for the values of $f$. We do not know whether the exchange is really needed, but still can effectively write a program that takes into account this exchange when and if it turns out to be necessary (the necessity of exchange becomes obvious when we come to the stage when the corresponding value needs to be computed).
\end{proof}
By Lemma~\ref{lem:schnorr-injection} and Lemma~\ref{lem:schnorr}, the Myhill--Schnorr theorem is proven.
\end{proof}% of Myhill--Schnorr theorem

So we have studied two conditions for a machine: optimality and effective optimality. The second is stronger; it makes the domain of the machine $m$-optimal and therefore determines it uniquely up to a computable length-preserving (up to $O(1)$ additive term) bijection. For most of the results in the next sections we will only need optimality, and use the criterion provided by Theorem~\ref{th:CNSS}.

\section{Approximate algorithms for halting problem}\label{sec:approximation}

Now we consider the halting problem as the decision problem for the domain of some fixed optimal machine $U$. Theorem~\ref{th:CNSS} guarantees that this problem is undecidable (otherwise the complexity $\KS(H_n)$ would be logarithmic in $n$). So there is no algorithm that correctly answers all questions whether $x$ belongs to the domain of $U$ or not.

For every algorithm $A$ (in this section we understand ``algorithm" as partial computable function) we consider the set of inputs where $A$ errs (as a decision algorithm for halting problem), i.e., the set of strings $x$ such that $x\in \dom(U)$ and $A(x)$ does not output $1$, or $x\notin \dom(U)$ and $A(x)$ does not output $0$. Note that there are no restrictions for $A$ yet. Later we will consider algorithms that are total, but may give incorrect answers (\emph{coarse} computation), and also non-total algorithms that never give wrong answers (\emph{generic} computation).

For each $n$ we consider the fraction $\varepsilon_n(A)$ of errors among all strings of length at most $n$ (counting both places where $A$ is undefined, and places where it produces a wrong answer). We are interested in the asymptotic behavior of $\varepsilon_n(A)$ as $n\to\infty$ (since $A$ always can be adjusted on a finite set).  The sequence $\varepsilon_n$ may not converge, so we consider two quantities  $\overline{E}(A)=\limsup \varepsilon_n(A)$ and $\underline{E}(A)=\liminf\varepsilon_n(A)$. In terms of these quantities, we may describe the behavior of the error rate as follows:

\begin{thm}\label{th:main}\hfill
\begin{enumerate}[label=\({\alph*}]

\item
$\underline{E}(A)>0$ for every $A$;

\item
there exists $\varepsilon>0$ such that $\overline{E}(A)\ge \varepsilon$ for every $A$;

\item
for every $\varepsilon>0$ there exists $A$ such that $\underline{E}(A)\le\varepsilon$;

\item
if we consider only algorithms that may be undefined but never produce wrong answers,
then the statement ``for every $\varepsilon>0$ there exists an algorithm $A$ of this type such that $\underline{E}(A)\le\varepsilon$'' may be true or false depending on the choice of the universal machine~$U$\textup; for example, it always holds when~$U$ is left-total and never holds when~$U$ is effectively optimal. 
\end{enumerate}
\end{thm}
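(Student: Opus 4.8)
The plan is to run all four parts through Theorem~\ref{th:CNSS}. Write $H_n=\#(\dom U\cap\{\,x:|x|\le n\,\})$; then $\KS(H_n\mid n)=n-O(1)$, and — what will actually be used — from $n$ together with the \emph{exact} value $H_n$ one reconstructs $\dom U\cap\{|x|\le n\}$ (enumerate $\dom U$ until $H_n$ strings of length $\le n$ show up), hence the finite set of all strings of complexity $\le n$, hence a string of complexity $>n$ of length $\le n+O(1)$. Parts (a)--(c) will all be phrased as statements about how closely the c.e.\ set $A_1=\{x:A(x)=1\}$ (together with $A_0=\{x:A(x)=0\}$) can shadow $\dom U$, and (d) about the shadow available to a \emph{generic} $A$, where $A_1\subseteq\dom U$, $A_0\subseteq\overline{\dom U}$ and $E(A)=\overline{\dom A}$.

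\textbf{Parts (a) and (b).} For (a), assume $\underline E(A)=0$ and pick $n$ with $\varepsilon_n(A)$ as small as we wish. Since $(\dom U\setminus A_1)\cap\{|x|\le n\}$ and $(A_1\setminus\dom U)\cap\{|x|\le n\}$ are both contained in $E(A)$, the c.e.\ set $A_1$ shadows $\dom U$ on $\{|x|\le n\}$ up to $e_n:=\varepsilon_n(A)(2^{n+1}-1)$ errors, so running $U$ on $A_1\cap\{|x|\le n\}$ enumerates a set $T$ of complexity-$\le n$ strings missing at most $e_n$ of them all and containing no complexity-$>n$ string; hence one of the first $\approx e_n$ strings outside $T$ has complexity $>n$. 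The goal is to derive a contradiction with Theorem~\ref{th:CNSS} from this. Part (b) runs identically but with $\varepsilon_n(A)<\varepsilon$ for \emph{all} large $n$: the loss of the freedom to pass to a subsequence is exactly what allows one fixed $\varepsilon$ to serve every~$A$, so (a) and (b) should be proved by the same mechanism with a different bookkeeping of which lengths are used.

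\textbf{The main obstacle.} The step glossed over above is where I expect the real work to be: to make $T$ available one must name $\#(A_1\cap\{|x|\le n\})$, and to single out a complexity-$>n$ string among the $\approx e_n$ candidates one must name an index $<e_n$; when $e_n$ is a fixed fraction of $2^{n}$ each of these costs $\approx n$ bits, so the naive description-length count does \emph{not} yet contradict anything. I would try to remove this cost by a self-reference argument rather than a raw counting argument: using the recursion theorem, build an algorithm (or a machine) whose definition already incorporates ``the translation of $A$'s verdicts'', so that $A$ doing too well forces it to decide something undecidable — in the spirit of the inseparability arguments behind Proposition~\ref{prop:left-total} and Proposition~\ref{prop:effective}. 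Failing that, one exploits the remaining slack in (a): one may pass to a further subsequence and choose which \emph{cheap} parameter (a count that happens to be small, a Busy-Beaver-type time bound, $\ldots$) to name, trading the expensive ``which programs does $A$ mishandle'' information for information about $n$ alone.

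\textbf{Parts (c) and (d).} For (c) I would exhibit, for each $\varepsilon>0$, the algorithm $A_\varepsilon(x)$ that runs $U$ on all programs of length $\le|x|$ for a number of steps growing with $|x|$, answers $1$ on those that have halted and $0$ otherwise, and argue that for infinitely many $n$ all but an $\varepsilon$-fraction of $\dom U\cap\{|x|\le n\}$ has already surfaced within the allotted time, so $\varepsilon_n(A_\varepsilon)\le\varepsilon+o(1)$ along those $n$; the delicate point is producing such lengths, which rests on the finiteness of each level together with a pigeonhole over the (uncomputable but well-defined) ``settling times''. For (d), the effectively-optimal case is the negative one and the cleanest: since then $\dom U$ is $m$-optimal, a good generic algorithm for $\dom U$, pulled back along a length-bounded reduction of two computably inseparable c.e.\ sets, would separate them — impossible — so $\underline E(A)$ is bounded below uniformly over all generic~$A$; this is exactly the mechanism of Proposition~\ref{prop:left-total}. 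The left-total case is the positive one: using that the $n$-bit strings of $\dom U$ form an initial segment (and that, by Theorem~\ref{th:CNSS}, this segment never exhausts $\{|x|\le n\}$), one constructs directly a generic algorithm whose undefined set has vanishing density along a subsequence.
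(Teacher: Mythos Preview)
Your ``main obstacle'' in (a)--(b) is not an obstacle: the counting argument works once you describe the right object. You try to name a complexity-$>n$ string by first naming $\#(A_1\cap\{|x|\le n\})$ and then an index among $\approx e_n$ candidates, and correctly observe that this costs about $2n$ bits. But the paper bypasses both costs by describing $H_n$ itself. Given $n$, a self-delimiting code for $d$ (with $\varepsilon_n(A)\le 2^{-d}$) tells you when to stop: simply wait until $A$ has converged on all but a $2^{-d}$-fraction of $\{|x|\le n\}$ --- this is a decidable stopping condition and requires no knowledge of $\#A_1$. Let $P$ be the number of $1$-answers at that moment; then $|H_n-P|=O(2^{n-d})$, so the signed difference fits in $n-d+O(1)$ further bits. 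This yields $\KS(H_n\mid n)\le \KP(d\mid n)+(n-d)+O_A(1)$, which against $\KS(H_n\mid n)\ge n-O(1)$ forces $d=O(1)$. Part~(b) is literally the same inequality with an extra $\KP(A\mid n)$ term, and one uses that $\KP(A\mid n)=O(1)$ for infinitely many $n$. No recursion theorem is needed.

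For (c), running $U$ for ``a number of steps growing with $|x|$'' cannot work: by the busy-beaver bounds of Section~\ref{sec:busy-beaver}, the time needed for all but a $2^{-d}$-fraction of length-$\le n$ programs to halt is of order $\BB(n-d)$, which outgrows every computable function of $n$. The paper instead waits not for a prescribed \emph{time} but for a prescribed \emph{fraction}: take a rational $r$ just below $\overline\rho=\limsup\rho_n$, enumerate lengths $n$ with $\rho_n>r$, and for each such $n$ run $U$ until an $r$-fraction halts. Since $\rho_n$ cannot much exceed $\overline\rho$, the error along this computable subsequence is small.

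For the negative half of (d), pulling back along a reduction of two inseparable c.e.\ sets does not help: a generic decider for $\dom U$ pulled back through a reduction $x\in P\Leftrightarrow h(x)\in\dom U$ gives a generic decider for $P$, but nothing prevents a single c.e.\ set $P$ from being generically decidable. The paper's tool is a \emph{sparse simple} set $S$: any error-free partial decider for $S$ can output $1$ only on the sparse $S$ and output $0$ only finitely often (else it enumerates an infinite subset of $\overline S$), so its domain has density $0$. Reducing $S$ to $\dom U$ by a length-bounded injection then pins the undefined set of any error-free $A$ to positive lower density.
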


\noindent These results (in a different setting and with additional restrictions) were proved in \cite{Lynch1974,CJ1999,KohlerSchindelhauerZiegler}; we show that they can be easily proven using Kolmogorov complexity.

\begin{proof}
(a)~Knowing $n$ and some upper bound $2^{n-d}$ for the number of errors (of both types: $A$ is undefined or the value is wrong) that $A$ makes for strings of length at most $n$, we wait until~$A$ becomes defined on all strings of those lengths except for $2^{n-d}$ ones. Then we count the number of positive answers; it differs by at most $O(2^{n-d})$ from $H_n$, the number of strings of length at most $n$ in the domain of $U$. The difference can be specified by $n-d+O(1)$ bits, so the complexity  $\KS(H_n\cnd n)$ is bounded by $\KP(d\cnd n)+(n-d)+O(1)$, where $\KP(d\cnd 
n)$ in the prefix complexity of $d$ given $n$, the minimal length of self-delimiting program that maps $n$ to $d$. Note that the $O(1)$-constant depends on $A$. The bound $\KS(H_n\cnd n)\ge n-O(1)$ (see Theorem~\ref{th:CNSS} and its proof) then implies that $d-\KP(d\cnd n)\le O(1)$, so $d=O(1)$. This provides the required bound $2^{-O(1)}$ for the fraction of errors for all large enough $n$.

(b)~The difference with (a) is that now the threshold for the number of errors should be the same for all approximation algorithms (while in (a) each algorithm has its own threshold). On the other hand, we only need to show that the fraction of errors is above the threshold infinitely often (and not necessarily for all large enough $n$). So the same argument with small modification works. It can be used with $A$ as a parameter to prove that 
$$
\KS(H_n\cnd n)\le \KP(d\cnd n)+(n-d)+\KP(A\cnd n)+O(1),
$$
if the number of errors is bounded by $2^{n-d}$, and now $O(1)$ does not depend on $A$. It remains to note that for some $c$ and every $A$ there exist infinitely many $n$ such that $\KP(A\cnd n)\le c$ (consider $n$ whose binary representation starts with a self-delimiting program for $A$). We now get (in the same way as before) the uniform lower bound for the number of errors that arbitrary algorithm $A$ makes (but only for $n$ that make $A$ simple). 

(c)~For each $n$ consider $\rho_n$, the fraction of strings of length at most $n$ that belong to the domain of $U$. We will see later that $\rho_n$ does not converge, but we may still consider $\overline\rho=\limsup_n \rho_n$. Now consider a rational number $r$ that is smaller than $\overline\rho$ but is very close to it. There are infinitely many lengths for which the fraction of terminating computations exceeds $r$, and these lengths can be discovered ultimately. We may find a computable increasing sequence of lengths having this property. Consider some length $n$ in this sequence. Imagine that we run $U$ for all inputs of length at most $n$ until we get a fraction~$r$ of terminating computations (i.e., corresponding inputs for $U$). This will happen at some point (according to the construction of the sequence).  If we give positive answers for these inputs and negative answers for all others, the error rate is small if $n$ is sufficiently large. Indeed, since $\limsup \rho_n =\rho$, the value of $\rho_n$ cannot exceed $\overline\rho$ significantly, and $r$ is close to $\overline\rho$, so we have found all positive answers except for a small fraction. This works for each large enough $n$ from the sequence, but different $n$ can lead to different answers for the same input. Let us assume that the lengths in the sequence increase fast enough (this can be done without loss of generality) and agree that we use the minimal length in the sequence that covers our input. If $n_i$ and $n_{i+1}$ are neighbor elements of the sequence, then $n_{i+1}$ is used for all strings of length between $n_i$ and $n_{i+1}$, and strings of length at most $n_i$ form a negligible fraction among strings of length at most $n_{i+1}$.

(d) The claim here consists of two parts --- positive and negative. For the positive part we need to show that for a left-total machine one can construct an algorithm that never provides incorrect answers and for infinitely many $n$ terminates on most strings of length at most $n$ (to be exact, on more than an $(1-\varepsilon)$-fraction of them). Let us explain informally why left-totality helps. If we know how many strings of given length $n$ appear in the left-total set, we know which strings are there (initial segment of this length). If we have some lower and upper bounds for this number that differ by some small~$k$, we can provide correct answers for all strings except for $k$ strings in the middle. However, this argument works only for strings of length \emph{exactly} $n$, and not for strings of length \emph{at most} $n$, so arguments used to prove~(c) do not work now and should be changed.

For each $n$ let us consider the fraction $\tau_n$ of strings of length \emph{exactly} $n$ that belong to $\dom(U)$. As before, there exists some $\tau=\limsup \tau_n$. Consider two rational numbers $r<\tau$ and $r'>\tau$ close to each other. According to the definition of $\limsup$, we have $\tau_n<r'$ for all $n$ greater than some $N$; there are infinitely many $n>N$ such that $\tau_n>r$. The values of $n$ such that $\tau_n>r$ can be enumerated. Therefore, there exists an infinite computable increasing sequence $n_1<n_2<n_3<\ldots$ of integers such that for each $i$ the fraction $\tau_{n_i}$ of $n_i$-bit strings that belong to $\dom(U)$ is between $r$ and $r'$. The difference between $r$ and $r'$ can be made arbitrarily small (though the values of $r$, $r'$, and $N$ cannot be found effectively; they just exist). So we can give correct answers for strings of length $n_i$ without errors and with few omissions, saying ``yes'' for the bottom $r$-fraction and ``no'' for top $(1-r')$-fraction.

This is not enough for us, since in our definition the fraction of prediction failures is calculated in the set of all strings of length \emph{at most $n$}, and even if we know everything for $n$-bit strings, this covers only half of the strings in question. So in this way we cannot make the error less than $1/2$.

But we can repeat the trick: consider the lengths $n_1-1, n_2-1, n_3-1,\ldots$ and the fractions $\tau_{n_i-1}$ of strings of these lengths that belong to $\dom(U)$. The sequence $\tau_{n_i-1}$ again has some $\limsup$, and one can find rational numbers $s<s'$ that are close to each other, and some computable subsequence $m_1<m_2<m_3<\ldots$ of the sequence $n_i$ such that $\tau_{m_i-1}$ is always between $s$ and $s'$. In this way we can provide correct answers for almost all strings of two subsequent lengths $m_{i}-1$ and $m_i$, thus reducing the error from $1/2$ to $1/4$ (approximately).

This construction can be repeated once more, giving error rate close to $1/8$ (for lengths in some computable subsequence), etc. Repeating this trick finitely many times, we get arbitrarily small error. Note that for this we need only finitely many bits of advice, so this still gives a partial computable predictor. The statement (d) is proven.
\smallskip

For the negative part of (d), when~$U$ is effectively optimal, we follow Lynch~\cite{Lynch1974}:
\begin{lem}\label{lem:sparse-simple}
There exists a sparse simple set $S$.
\end{lem}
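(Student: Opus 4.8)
The plan is to run Post's classical construction of a simple set, making exactly one change — the ``protected'' initial segment attached to each requirement is taken to grow very fast rather than linearly — which is what turns co-infiniteness into sparseness. Throughout we identify binary strings with natural numbers via the length-lexicographic order, so that the strings of length at most $n$ are the integers below $2^{n+1}$ (up to an additive $1$); ``$S$ is sparse'' will mean at least that $\#\{x\in S\colon|x|\le n\}/2^{n+1}\to 0$, and the construction will in fact yield a count of order $O(\log n)$, which is sparse under any reasonable reading of the word.

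Fix an effective enumeration $W_0,W_1,\ldots$ of all enumerable sets and a strictly increasing computable function $f$ with $\#\{e\colon f(e)<N\}=o(N)$ for all $N$ (for instance $f(e)=2^{2^e}$). We enumerate $S$ by dovetailing the enumerations of all the $W_e$: whenever an element $x$ with $x>f(e)$ is found in $W_e$ and nothing has yet been put into $S$ on behalf of~$e$, we put $x$ into $S$ and mark $e$ as having \emph{acted}. Thus $S$ is enumerable, and each index $e$ contributes at most one element to $S$ (namely, at most the first $x\in W_e$ it reveals with $x>f(e)$).

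It remains to check the three required properties. \emph{$S$ meets every infinite $W_e$:} an infinite $W_e$ contains elements of arbitrarily large value, in particular some $x>f(e)$, and the first such $x$ to be revealed is placed in $S$, so $S\cap W_e\ne\varnothing$; hence $\overline S$ contains no infinite enumerable set. \emph{$S$ is sparse:} an element of $S$ that is $\le N$ was placed there on behalf of some $e$ with $f(e)<N$, and there are at most $\#\{e\colon f(e)<N\}=o(N)$ such indices, each responsible for at most one element; with $f(e)=2^{2^e}$ this bound is $\log_2\log_2 N+O(1)$, so $\#\{x\in S\colon|x|\le n\}=O(\log n)$. In particular $\overline S$ is co-infinite, so $S$ is simple. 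The only real point of the argument is that simplicity pushes $S$ to be large enough to puncture every infinite enumerable set, while sparseness pushes it to be tiny; the fast threshold $f$ reconciles the two, and there is no obstacle beyond noticing that Post's original linear choice $f(e)=2e$ gives a simple set of density only $\le 1/2$, which the fast $f$ repairs.

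Finally, this is precisely the ingredient needed for the negative half of~(d): when $U$ is effectively optimal, $\dom(U)$ is $m$-optimal, so by Lemma~\ref{lem:schnorr-injection} a sparse simple $S$ reduces to $\dom(U)$ by an \emph{injective} length-bounded total computable $h$. A never-erring partial computable predictor $A$ for $\dom(U)$ then induces the never-erring predictor $A\circ h$ for $S$, whose ``no''-set is an enumerable subset of $\overline S$ (hence finite, by simplicity) and whose ``yes''-set lies in $S$ (hence sparse); so $A\circ h$ is defined on only a sparse-plus-finite set of strings. Since $h$ is injective and length-bounded, this forces $A$ itself to be undefined on a fixed positive fraction of the strings of every sufficiently large length, so $\underline E(A)$ is bounded below by a positive constant depending only on $U$, and the statement of~(d) fails.
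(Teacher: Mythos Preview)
Your proof is correct and takes a genuinely different route from the paper. One small slip: you write ``$\overline S$ is co-infinite,'' but what you need (and what your sparseness estimate actually gives) is that $\overline S$ is \emph{infinite}; this is clearly just a typo.

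The paper instead takes $S=\{x:\KS(x)<|x|/2\}$, the set of highly compressible strings. Sparseness is immediate by counting descriptions (at most $O(2^{n/2})$ strings of length $n$ lie in $S$), and simplicity follows because an infinite enumerable subset of $\overline S$ would let one effectively produce, for each $N$, a string of complexity $>N$ using only $O(\log N)$ bits. Your argument is Post's original construction with the linear threshold $2e$ replaced by $2^{2^e}$; it is more elementary in that it needs no Kolmogorov complexity at all, and it actually yields a sparser set (only $O(\log n)$ elements of length at most $n$, versus the paper's $O(2^{n/2})$). The paper's choice, on the other hand, is a two-line proof that stays within the complexity toolkit the paper is built around.

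Your final paragraph, deriving the negative half of (d) from the lemma, matches the paper's own argument essentially verbatim (injective length-bounded reduction of $S$ into $\dom(U)$, then pull back a never-erring predictor). It is correct but not part of the lemma itself.
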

Recall that a simple set is an enumerable set whose complement is infinite but does not contain an infinite enumerable subset; sparsity means that the fraction of strings of length $n$ that belong to $S$ tends to $0$ as $n\to\infty$.
\begin{proof}[Proof of Lemma~\ref{lem:sparse-simple}]
Let $S$ be the set of highly compressible strings, i.e., strings $x$ such that $\KS(x)< |x|/2$. There is at most $O(2^{n/2})$ compressible strings of length $n$, so $S$ is sparse. If the complement of $S$ contains an infinite enumerable subset, then for every $N$ we can enumerate this set until we get a string of length greater than $2N$ and complexity greater than $N$,  and this string has complexity only $O(\log N)$ --- a contradiction.
\end{proof}

An algorithm that tries to decide a sparse simple set $S$ and is not allowed to give wrong answers can be defined only on a set of density $0$: it can give positive answers only for elements of $S$, and they are rare; on the other hand, it can give only finitely many negative answers since $S$ is simple. 

As we have seen, every enumerable set is reducible to the $m$-optimal set $\dom(U)$ by a length-bounded total computable injection. Consider such a reduction for the sparse simple set~$S$. This reduction can be used to convert an approximate algorithm for $\dom(U)$ without errors to an approximate algorithm for $S$ without errors, and the latter algorithm is undefined almost everywhere. Since the reduction is injective and length-bounded, the image has positive lower density, and it becomes a lower bound for $\underline E(A)$ for every algorithm $A$ that does not make errors.
\end{proof}

Our definition of error rate considers all strings of length at most $n$ to have equal importance. Thus it can be interpreted in a probabilistic fashion: if $A$ is an approximate algorithm, the probability that $A$ makes an error on a $p$ chosen uniformly among strings of length~$n$ is ${>\varepsilon}$ for some $\varepsilon>0$ independent on~$n$. It is natural to combine this implicit randomness with explicit randomness in the algorithm, i.e., allow~$A$ to be a probabilistic algorithm itself. For a randomized algorithm $A$ (that uses fair coin tossing) we define $\varepsilon_n(A)$ as the probability of error on a random input uniformly distributed in the set of all strings of length at most $n$. (We assume, as usual, that the random bits used in the computation are independent of the random choice of an input string.) Then $\overline{E}(A)$ and $\underline{E}(A)$ are defined in the same way as before.  We show now that the negative results of Theorem~\ref{th:main} (a) and (b) remain valid for this setting; note that the algorithms without errors mentioned in (d) do not make much sense in this probabilistic setting (if a probabilistic algorithm never gives wrong answers, it can be replaced by a deterministic search over all possible values of random bits).

\begin{thm}\label{th:main-prob}
For probabilistic algorithms we still have\textup:
\textup{(a)}
$\underline{E}(A)>0$ for every $A$\textup;
\textup{(b)}
there exists $\varepsilon>0$ such that $\overline{E}(A)\ge \varepsilon$ for every $A$\textup.
\end{thm}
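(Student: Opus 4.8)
The plan is to reduce the probabilistic case to the deterministic bounds already established in Theorem~\ref{th:main}(a) and (b) by an averaging argument. A probabilistic algorithm $A$ is a function of the input string $p$ and of an auxiliary string $\omega$ of fair coin flips; the error probability $\varepsilon_n(A)$ on a uniformly random $p$ of length at most $n$ is the expectation over $\omega$ of the corresponding deterministic error rate $\varepsilon_n(A_\omega)$, where $A_\omega$ is the deterministic algorithm obtained by hard-wiring the random bits to $\omega$. By Fubini this gives $\varepsilon_n(A) = \mathbb{E}_\omega\bigl[\varepsilon_n(A_\omega)\bigr]$, where the expectation is with respect to the uniform (Lebesgue) measure on infinite bit sequences $\omega$. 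So a lower bound on $\varepsilon_n(A)$ will follow from a lower bound that holds for "most" $\omega$ on $\varepsilon_n(A_\omega)$.

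First I would redo the complexity computation from the proof of Theorem~\ref{th:main}(a), but now keeping track of the oracle $\omega$. For a fixed $\omega$, the deterministic algorithm $A_\omega$ making at most $2^{n-d}$ errors (of both types) on strings of length at most $n$ lets us reconstruct $H_n$ from $n$, from a short description of $d$, and from the (at most $2^{n-d}$) positions where $A_\omega$ errs --- \emph{provided} we can simulate $A_\omega$, which requires access to $\omega$. Thus we get $\KS^\omega(H_n \cnd n) \le \KP(d \cnd n) + (n-d) + O(1)$ with the $O(1)$ depending on $A$ but not on $\omega$. The key point is that relativizing Theorem~\ref{th:CNSS} is not quite what we want (the set $\dom(U)$ is not relativized); instead I would use that $\KS(H_n \cnd n) \le \KS^\omega(H_n \cnd n) + 2\KP(\text{first } N(\omega)\text{ bits of }\omega) + O(1)$ where $N(\omega)$ is the (finite) number of random bits $A_\omega$ actually reads while processing strings of length $\le n$. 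Combined with the unrelativized bound $\KS(H_n\cnd n)\ge n-O(1)$, this forces $d \le \KP(\text{used bits of }\omega) + \KP(d\cnd n) + O(1)$. So for any particular $\omega$ whose relevant prefix is incompressible, $d = O(1)$ and the deterministic error rate is $\ge 2^{-O(1)}$; since the incompressible $\omega$ have measure close to $1$, averaging gives $\varepsilon_n(A) \ge 2^{-O(1)} \cdot (1 - o(1))$ for all large $n$, which yields (a). For (b) I would similarly relativize the proof of Theorem~\ref{th:main}(b), making $A$ a parameter and keeping the $\KP(A\cnd n)$ term, then use that there are infinitely many $n$ making $A$ simple and arguing that for such $n$ the bound holds uniformly; the measure-averaging over $\omega$ is carried out just as in part~(a).

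The main obstacle I anticipate is making precise the bookkeeping of random bits: one must argue that $A_\omega$ reads only finitely many bits of $\omega$ on inputs of length $\le n$ before the relevant errors/non-halting are "committed" --- but $A_\omega$ may fail to halt, so "number of bits read" is not well-defined globally. The fix is that we only need to simulate $A_\omega$ long enough to see it halt correctly on all but $2^{n-d}$ strings (this is how the original argument already handles non-halting), and that finite simulation reads only finitely many bits of $\omega$; we then charge the description of exactly those bits. A cleaner alternative, which I would actually pursue, is to avoid mentioning a specific $\omega$ altogether: note that the \emph{expected} reconstruction works directly, since from $n$ and the advice we can compute the distribution of $A$'s answer vector and recover $H_n$ as (roughly) $\lfloor 2^n \cdot (\text{expected yes-fraction}) \rceil$ up to an error governed by $\varepsilon_n(A)$ via Markov's inequality --- this sidesteps randomness in $\omega$ entirely and gives a $\KS(H_n\cnd n) \le \KP(d\cnd n) + (n-d) + O(1)$ bound where now $2^{n-d}$ is a bound on $2^n\varepsilon_n(A)$ rather than on a deterministic error count. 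Either route should work; the second is shorter and I would present that one, falling back to the averaging argument for intuition.
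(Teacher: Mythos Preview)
Your second (``cleaner'') approach is essentially the paper's proof: one simulates the probabilistic algorithm on all inputs and all coin sequences until the average probability of \emph{no answer} drops below $2^{-d}$, then uses $\sum_x p(x)$ (the expected number of positive answers) as an approximation to $H_n$; the bound $|H_n-\sum_x p(x)|\le\sum_x|\chi_{\dom(U)}(x)-p(x)|\le 2^{n+1}\varepsilon_n(A)$ is the direct $L^1$ estimate you allude to (Markov's inequality is not actually needed), and one concludes $\KS(H_n\cnd n)\le n-d+O(\log d)$ and hence $d=O(1)$ as in the deterministic case. So that route is correct and matches the paper.

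Your first approach, however, contains a genuine error in the de-relativization step. From $d\le \KP(\text{used prefix of }\omega)+\KP(d\cnd n)+O(1)$ you conclude ``for any $\omega$ whose relevant prefix is \emph{incompressible}, $d=O(1)$''. This is backwards: an incompressible prefix has $\KP(\text{prefix})\approx |\text{prefix}|$, which can be arbitrarily large, so the inequality gives no bound on $d$ at all. You would need the prefix to be \emph{compressible} to bound $d$, but compressible oracles have measure zero and the averaging fails. The correct way to eliminate the oracle---used later in the paper in the proof of Theorem~\ref{th:prob-neg}---is the lemma ``if $\KP^\omega(a\cnd b)\le k$ with probability at least $p$ over $\omega$, then $\KP(a\cnd b)\le k+\log(1/p)+O(1)$'': combine this with Markov's inequality (half of the $\omega$'s have $\varepsilon_n(A_\omega)\le 2\varepsilon_n(A)$) and you get a valid version of your first route. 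Since you already identified the second approach as the one to present, this does not affect your final proof, but the first sketch as written would not go through.
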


\begin{proof}
Knowing $n$ and some upper bound $2^{-d}$ for the probability of error on strings of length at most $n$, we may emulate the behavior of $A$ on all inputs and for all combinations of random bits until the probability not to get an answer (correct or incorrect) goes below $2^{-d}$. In other words, for every string $x$ of length at most $n$ we have $1=p(x)+n(x)+u(x)$, where $p(x)$, $n(x)$, and $u(x)$ are probabilities of positive answer, negative answer, and no answer respectively. The simulation provides better and better lower bounds for $p$ and $n$, and upper bounds for $u$, and we wait until the average of the upper bounds for $u(x)$ (taken over all strings of length at most~$n$) goes below $2^{-d}$. (This must happen since we assumed the probability of error to be  $<2^{-d}$.) 

After that we note that $\sum_x p(x)$ (sum of current approximations taken over all strings of length at most $n$) provides the approximation for the number of strings of length at most $n$ that belong to $\dom(U)$ (assuming that the error probability is indeed bounded by $2^{-d}$). One can also use $\sum (1-n(x))$; both quantities are $O(2^{n-d})$-close to this number. Indeed, let us consider the first quantity: $H_n -\sum_x p(x)$ can be written as $\sum_x (\chi_{\dom(U)}(x)-p(x))$, where $\chi_{\dom(U)}$ is the indicator function for $\dom(U)$, and the sum is taken over all $x$ of length at most $n$. The latter sum can be bounded by $\sum_x |\chi_{\dom(U)}(x)-p(x)|$. Each summand is bounded by a probability of failure for specific $x$: for $x\in\dom(U)$ it is exactly the probability of failure, for $x\notin\dom (U)$ it is smaller (since the algorithm may produce no answer for this $x$). We know by assumption that the average value of the probability of failure is less that $2^{-d}$, thus the sum $\sum_x |\chi_{\dom(U)}(x)-p(x)|$ is bounded by $2^{n-d}$. 

Therefore, we get the upper bound
$$
n\le C(H_n\cnd n)\le n-d+O(\KP(d\cnd n))=n-(d-O(\log d))
$$ 
(we omit $O(1)$-terms), therefore $d-O(\log d)=O(1)$ and $d=O(1)$ as required by~(a).
Here the $O(1)$ term depend on $A$; if $A$ is not fixed, we get the inequality
$$
d-O(\log d)\le \KP(A\cnd n)+O(1),
$$
which gives (b) since $\KP(A\cnd n)\le O(1)$ for infinitely many $n$.
\end{proof}
	
\section{Halting rate and algorithmic randomness}\label{sec:halting-rate}

In this section we consider a different question: instead of deciding the halting problem we study just the fraction of inputs where the optimal machine is defined, and its asymptotic behavior. We consider some optimal machine $U$. Let $H_n$ be the number of inputs of length at most~$n$ on which $U$ halts, and let $\rho^U_n$ be the \emph{fraction} of inputs of length at most~$n$ on which $U$ halts. More precisely, we define $\rho^U_n = H_n / 2^{n+1}$; technically we should put $2^{n+1}-1$ in the denominator, but we ignore this small difference to simplify the notation. As we are only interested in the asymptotic behavior of $\rho^U_n$ this makes no difference. We fix some optimal machine $U$ and write $\rho_n$ instead of $\rho^U_n$ when this machine $U$ is considered.

\begin{thm}\label{th:classification}\hfill
\begin{enumerate}[label=\({\alph*}]

\item For every computable sequence $r_n$ of rational numbers the difference $|\rho_n-r_n|$ is separated from $0$ \textup(i.e., is greater than $\varepsilon$ for some $\varepsilon>0$ and almost all $n$\textup).

\item The sequence $\rho_n$ does not converge \textup(as $n\to\infty$\textup).

\item  All limit points of $\rho_n$ are Martin-L\"of random relative to~$\mathbf{0}'$.

\item The $\limsup$ of $\rho_n$ is upper semicomputable relative to~$\mathbf{0}'$ \textup(and Martin-L\"of random relative to $\mathbf{0}'$, see the previous claim\textup). 

\item The converse holds\textup: every real in $[0,1]$ that is upper semicomputable relatively to~$\mathbf{0}'$ and Martin-L\"of random relative to $\mathbf{0}'$ is the $\limsup$ of  $\rho^V_n$ for some optimal machine~$V$.  The machine $V$ can be made effectively optimal, too.
\end{enumerate}
\end{thm}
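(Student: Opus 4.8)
The plan is to get part~(a) directly from incompressibility, to reduce part~(b) to the non-convergence of $\rho_n$ and deduce it from (c) and (d), and to treat (c), (d), (e) as the substance of the theorem by combining the bound $\KS(H_n\cnd n)\ge n-O(1)$ of Theorem~\ref{th:CNSS} with relativized randomness. For (a), suppose $|\rho_n-r_n|\le\varepsilon$. From $n$ one computes $r_n$, hence a list of at most $2\varepsilon 2^{n+1}+O(1)$ integers containing $H_n$ (those in $[(r_n-\varepsilon)2^{n+1},(r_n+\varepsilon)2^{n+1}]$); specifying which one costs $(n+1)+\log\varepsilon+O(1)$ bits, the constant depending only on $U$ and on the program for $(r_n)_n$. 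Comparison with $\KS(H_n\cnd n)\ge n-O(1)$ forces $\log(1/\varepsilon)=O(1)$, so $|\rho_n-r_n|$ stays above a fixed positive constant for all $n$. Taking $r_n\equiv 0$ and $r_n\equiv 1$ in particular shows $\rho_n$ is bounded away from $0$ and from $1$ by constants, a fact used below.

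For (b), suppose $\rho_n\to L$, so $L=\limsup\rho_n=\liminf\rho_n$. By (d) the number $\limsup\rho_n=L$ is upper semicomputable relative to $\mathbf{0}'$. The non-halting counts $\bar H_n=2^{n+1}-H_n$ again form a non-decreasing $\mathbf{0}'$-computable sequence bounded by $2^{n+1}$ with $\KS(\bar H_n\cnd n)=\KS(H_n\cnd n)+O(1)\ge n-O(1)$ — precisely the input to the proof of (d) — so the same argument gives that $\limsup(2^{n+1}-H_n)/2^{n+1}=1-\liminf\rho_n=1-L$ is upper semicomputable relative to $\mathbf{0}'$, i.e.\ $L$ is lower semicomputable relative to $\mathbf{0}'$. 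Then $L$ is computable relative to $\mathbf{0}'$, contradicting (c), according to which every limit point of $\rho_n$, in particular $L$, is Martin-L\"of random relative to $\mathbf{0}'$.

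For (c), given a limit point $L$ that is \emph{not} $\mathbf{0}'$-Martin-L\"of random, fix a $\mathbf{0}'$-Martin-L\"of test $(V_m)_m$ with $L\in\bigcap_m V_m$. Since $\rho_n=H_n/2^{n+1}$ is $\mathbf{0}'$-computable with $L$ among its limit points, for each $m$ there are infinitely many $n$ with $\rho_n\in V_m$, and for such $n$ the integer $H_n$ is one of at most $2^{\,n+1-m}$ dyadic rationals of denominator $2^{n+1}$ lying in $V_m$. The goal is to turn this into $\KS(H_n\cnd n)<n-O(1)$, contradicting Theorem~\ref{th:CNSS}: one encodes $H_n$ by its index among those candidates (about $n+1-m$ bits), the value $m$, and a finite initial portion of $\mathbf{0}'$ enough to run the enumeration of $V_m$; because $L$ lies in all $V_m$ and is approached by the $\rho_n$, for fixed $m$ that oracle portion is a constant while $n$ runs over an infinite set, so $n$ can be taken large enough to make the description beat $n-O(1)$. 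Keeping the amount of $\mathbf{0}'$ that must be spelled out under control is the delicate point, and is where the precise structure of $H_n$ (beyond mere $\mathbf{0}'$-computability) has to be used. Part (d) belongs to the same circle of ideas: one exhibits $\limsup\rho_n$ as the limit of a \emph{decreasing} $\mathbf{0}'$-computable sequence of rationals — e.g.\ by realizing $1-\limsup\rho_n$ as the halting probability, relative to $\mathbf{0}'$, of a prefix-free machine built from $U$, incompressibility being what rules out the approximations that would be off — while its $\mathbf{0}'$-randomness is already given by (c) since $\limsup\rho_n$ is itself a limit point.

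For (e), let $L\in[0,1]$ be upper semicomputable and Martin-L\"of random, both relative to $\mathbf{0}'$. Then $1-L$ is left-c.e.\ relative to $\mathbf{0}'$ and $\mathbf{0}'$-random, hence (the relativized characterization of random left-c.e.\ reals) $1-L=\sum_{p\in\dom M}2^{-|p|}$ for a prefix-free machine $M$ whose domain is enumerable relative to $\mathbf{0}'$. One builds $V$ by splitting the inputs: those starting with $0$ are routed to a fixed optimal machine (so $\KS_V\le\KS_U+O(1)$ and $V$ is optimal), while those starting with $1$ are used to simulate, through $V$'s own non-terminating computations, the $\mathbf{0}'$-enumeration of $\dom M$, arranging that the fraction of halting $n$-bit inputs follows $1-\sum_{p\in\dom M,\ |p|\le(\text{stage})}2^{-|p|}$ and so has $\limsup$ exactly $L$; to get effective optimality one makes $\dom V$ $m$-optimal and invokes Corollary~\ref{cor:effective-optimal-domain}. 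I expect this construction to be the main obstacle of the whole theorem: replacing the $\mathbf{0}'$ oracle by divergence ``for free'' while keeping $V$ genuinely optimal, and — the subtler half — pinning $\limsup\rho^V_n$ exactly to $L$ rather than merely bounding it from one side.
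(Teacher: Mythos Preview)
Your treatment of (a) matches the paper's. But from (b) onward there are real divergences and real gaps.

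\textbf{Part (b).} The paper does not go through (c) and (d) at all: it simply picks a computable sequence $r_n$ whose set of limit points is all of $[0,1]$; if $\rho_n\to L$ then $|\rho_n-r_n|$ would be small infinitely often, contradicting (a). Your route is salvageable (since $\rho_n$ is $\mathbf{0}'$-computable, $\limsup\rho_n$ and $\liminf\rho_n$ are respectively upper and lower $\mathbf{0}'$-semicomputable, so a common value would be $\mathbf{0}'$-computable and hence not $\mathbf{0}'$-random), but your stated justification is wrong: the proof of (d) does \emph{not} use the complexity bound $\KS(H_n\cnd n)\ge n-O(1)$ at all, so ``precisely the input to the proof of (d)'' is a non sequitur.

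\textbf{Part (d).} This is where you most misjudge the difficulty. The paper's proof is one line: $\rho_n$ is a uniformly lower semicomputable sequence (enumerate $\dom U$), and the $\limsup$ of any such sequence is $\mathbf{0}'$-upper-semicomputable. No incompressibility, no prefix-free machines, no ``ruling out approximations''. Your sketch (``realizing $1-\limsup\rho_n$ as a halting probability relative to $\mathbf{0}'$\ldots'') is heading in the wrong direction.

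\textbf{Part (c).} Here is a genuine gap you yourself flag but do not close. In your argument the description of $H_n$ costs roughly $(n+1-m)+O(\log m)$ bits \emph{plus} the finite initial segment of $\mathbf{0}'$ needed to enumerate the relevant piece of $V_m$. That last term depends on $m$, and nothing prevents it from growing faster than $m$; so you never get $\KS(H_n\cnd n)<n-O(1)$. The paper avoids this entirely by invoking Miller's characterization: $x$ is $\mathbf{0}'$-random iff there is a constant $c$ such that every prefix of $x$ extends to some $\tau$ with $\KS(\tau)\ge|\tau|-c$ (unrelativized plain complexity!). For a limit point $L$, any prefix $\sigma$ is shared by some $\rho_n$, and then $H_n$ written as an $(n{+}1)$-bit string is an extension of $\sigma$ with $\KS(H_n)\ge n-O(1)$ by Theorem~\ref{th:CNSS}. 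The whole point of Miller's theorem is to trade the oracle for a quantifier over extensions, which is exactly the ``delicate point'' you could not handle.

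\textbf{Part (e).} Your idea of splitting inputs into an optimal half and a density-controlling half is correct and is what the paper does. But your specific plan --- realize $1-L$ as an $\Omega^{\mathbf{0}'}$ and then ``simulate the $\mathbf{0}'$-enumeration through $V$'s non-terminating computations'' --- leaves the hard step (removing the oracle while keeping $V$ computable and the $\limsup$ exact) entirely unaddressed. The paper's route is different: first (Lemma~\ref{lem:crit}) any $\mathbf{0}'$-upper-semicomputable $z\in[0,1]$ is the \emph{density} (an actual limit) of some enumerable set, because such $z$ equals $\limsup k_n$ for a computable sequence of rationals. Second, by relativized Solovay completeness of $\mathbf{0}'$-random $\mathbf{0}'$-upper-semicomputable reals, one writes $L=y/2^d+z$ with $y$ the upper density of a given optimal machine and $z$ $\mathbf{0}'$-upper-semicomputable; then the $0^d$-shifted optimal machine contributes $y/2^d$ on the left and the enumerable set from the lemma contributes $z$ (as a limit, so $\limsup$'s add) on the right. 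This decomposition via Solovay completeness is the missing idea in your sketch.
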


\noindent Note that \(a -- \(d\ are true for every optimal $U$ (it is easy, for example, to construct an optimal~$U$ such that corresponding sequence $\rho_n$ does not converge, but we claim that this happens for arbitrary optimal $U$).

\begin{proof}\hfill
\(a{} We use the same complexity bound $\KS(H_n\cnd n)\ge n-O(1)$ for the number $H_n$ of strings of length at most $n$ in the domain of $U$ (see Theorem~\ref{th:CNSS} and its proof). If the difference between $r_n$ and $\rho_n$ does not exceed $2^{-d}$, then 
$$
\KS(H_n\cnd n)\le \KP(d\cnd n)+(n-d)+O(1)
$$
(to specify $H_n$ given $n$ we provide a self-delimited program for
$d$ and attach $n-d+O(1)$-bits for the difference between $r_n2^{n+1}$
and $H_n$), which gives $d=O(1)$.

(b) This is a simple corollary of (a): consider a computable sequence $r_n$ for which all the points in $[0,1]$ are limit points. If $\rho_n$ converges to some $\rho$, then $\rho_n$ is close to $\rho$ for all large $n$, and $r_n$ is close to $\rho$ infinitely often, and we get a contradiction with (a).

(c) For this proof we need to use a theorem by Miller~\cite{Miller2004} (see also \cite{BMSV2012} for a simple proof): a real number  $x \in [0,1]$ is Martin-L\"of random relative to~$\mathbf{0}'$ if and only if, when viewing $x$ as a bit sequence, there is a constant~$c$ such that for every prefix $\sigma$ of $x$, there is a finite string~$\tau$ extending~$\sigma$ such that $\KS(\tau) \geq |\tau|-c$. 

Suppose~$x$ is a limit point of~$\rho_n$. First note that~$x$ cannot be a rational number (otherwise the constant sequence $r_n=x$ approximates $\rho_n$), so $x$ has a unique binary representation. Let~$\sigma$ be a prefix of $x$ and let $k$ be the length of $\sigma$. Split $[0,1]$ into $2^k$ equal intervals of size $2^{-k}$. Then~$x$ is strictly inside one of these intervals (this interval consists of all binary extensions of~$\sigma$). Since $x$ is a limit point, some $\rho_n$ also belongs to this interval. Recall that $\rho_n$ is a binary fraction $H_n/2^{n+1}$ (here it is important that we use this denominator, not $2^{n+1}-1$; of course, this does not change the limit points). Therefore, $H_n$ (considered as a string of length $n+1$ with leading zeros) is an extension of $\sigma$, and $\KS(H_n)\ge |H_n|-O(1)$ due to Theorem~\ref{th:CNSS}, so it remains to use Miller's result. 

Item (c) is proven. We do not know whether the converse holds, i.e., whether any real that is Martin-L\"of random relative to $\mathbf{0}'$ is a limit point of some sequence $\rho^V_n$ for some optimal machine~$V$. However, in (d) we give a full characterization of the reals that are $\limsup$'s of those sequences. 

(d) Let us prepare ourselves by considering a simpler question. Assume that $X$ is an arbitrary enumerable set, i.e., the domain of some machine, not necessarily an optimal one; $x_n$ is the number of strings of length exactly $n$ in $X$, and $X_n$ is the number of strings of length at most $n$ in $X$ (so $X_n=x_0+\ldots+x_n$). Consider the upper density of $X$, i.e., $\limsup X_n/2^{n+1}$. Which reals can appear as upper densities of enumerable sets?

\begin{lem}\label{lem:crit}
A real number $x$ in $[0,1]$ is the upper density of some enumerable set $X$ if and only if $x$ is upper semicomputable relative to $\mathbf{0}'$.
\end{lem}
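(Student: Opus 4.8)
The plan is to prove the two implications separately, the ``only if'' being a short unwinding of quantifier complexity and the ``if'' requiring an explicit construction of a c.e.\ set.

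For the \textbf{``only if'' direction}, fix an enumerable $X$ with a computable enumeration, and for $n,s\in\mathbb N$ let $X_n^{(s)}$ be the number of strings of length $\le n$ enumerated into $X$ within $s$ steps, so that $X_n^{(s)}\nearrow X_n$ computably (uniformly in $n$). The only real point is the arithmetical form of a $\limsup$: for a rational $q$, the inequality $q>\limsup_n X_n/2^{n+1}$ holds exactly when $X_n/2^{n+1}<q$ for all large $n$, i.e.\ when $\exists N\,\forall n\ge N\,\forall s\ \big(X_n^{(s)}<q\,2^{n+1}\big)$, which is a $\Sigma^0_2$ condition on $q$ (up to the irrelevant boundary case $q=x$, trivial since then $x$ is computable). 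Hence $\{q\in\mathbb Q:q>x\}$ is c.e.\ relative to $\mathbf0'$, so taking the running minimum of such an enumeration gives a $\mathbf0'$-computable non-increasing rational sequence converging to $x$, i.e.\ $x$ is upper semicomputable relative to $\mathbf0'$. It is worth noting that one must use the c.e.\ approximation to $X_n$ here and not merely $X\le_m\mathbf0'$: the latter only gives a $\Sigma^0_2(\mathbf0')$ condition, hence u.s.c.\ relative to $\mathbf0''$, which is too weak.

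For the \textbf{``if'' direction}, I would first reformulate the hypothesis using the standard characterization that a real in $[0,1]$ is upper semicomputable relative to $\mathbf0'$ if and only if it is the $\limsup$ of a computable sequence of rationals in $[0,1]$. So fix a computable sequence $p_0,p_1,\dots$ with $\limsup_n p_n=x$, put $\sigma_m=\sup_{n\ge m}p_n$, so $\sigma_m\searrow x$ and $\sigma_m=\lim_s\sigma_m^{(s)}$ with $\sigma_m^{(s)}:=\max_{m\le n\le m+s}p_n$ non-decreasing and computable. Partition the string lengths into consecutive finite blocks $B_0,B_1,\dots$ with $|B_m|=L_m\to\infty$ (say $L_m=m+1$), and enumerate $X$ as follows: at stage $s$, for every length $\ell\in B_m$ with $\ell\le s$, put the first $\lfloor\sigma_m^{(s)}2^\ell\rfloor$ strings of length $\ell$ into $X$. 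Monotonicity of $\sigma_m^{(s)}$ in $s$ means this only ever adds strings, so $X$ is enumerable; the final density $\mu_\ell$ of length $\ell\in B_m$ satisfies $\sigma_m-2^{-\ell}\le\mu_\ell\le\sigma_m$, and since the blocks are consecutive and $\sigma_m$ is non-increasing in $m$, the bound $\nu_\ell:=\sigma_{m(\ell)}\ge\mu_\ell$ is a genuinely non-increasing sequence tending to $x$.

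The heart of the argument is then a short computation with the geometric weights. Writing $\rho_n=X_n/2^{n+1}$, one has $\rho_n=\sum_{i\ge1}\mu_{n+1-i}2^{-i}$ (with $\mu_\ell:=0$ for $\ell<0$). For the \emph{lower bound}, evaluate this at the last length $b_m$ of $B_m$: the first $L_m$ summands all have density $\ge\sigma_m-2^{-a_m}$ where $a_m$ is the first length of $B_m$, so $\rho_{b_m}\ge\sigma_m-2^{-L_m}-2^{-a_m}\to x$, giving $\limsup_n\rho_n\ge x$. For the \emph{upper bound}, split $\rho_n\le\sum_{i\ge1}\nu_{n+1-i}2^{-i}$ at $i=I_n:=\lfloor n/2\rfloor$: by monotonicity of $\nu$ the first $I_n$ terms sum to at most $\nu_{n+1-I_n}$ and the tail to at most $2^{-I_n}$, so $\rho_n\le\nu_{n+1-I_n}+2^{-I_n}\to x$ (using $n+1-I_n\to\infty$), giving $\limsup_n\rho_n\le x$. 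Combining, $x$ is the upper density of $X$.

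I expect the \textbf{main obstacle} to be exactly this ``if'' direction: one has to approximate $x$ by densities of an \emph{enumerable} set --- which can only grow --- from below and hitting $x$ in the $\limsup$ without ever overshooting. This is what is bought by passing to $x=\limsup_n p_n$ (so that $\sigma_m=\sup_{n\ge m}p_n$ is l.s.c.\ uniformly computably and non-increasing) together with the block structure, which converts the oscillating approximation of $x$ into the genuinely monotone final-density sequence $\nu_\ell$. A self-contained alternative avoiding the $\limsup$-of-rationals fact would be a finite-injury construction in which an attempt to record a value is abandoned on a fresh batch of later lengths each time its approximation changes its mind, the spoiled lengths being rendered negligible by the geometric weighting; I would mention this but prefer the cleaner block version above.
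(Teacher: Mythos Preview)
Your proof is correct and takes essentially the same approach as the paper: the ``only if'' direction is the same $\Sigma^0_2$ quantifier analysis, and the ``if'' direction uses the same $\limsup$-of-computable-rationals characterization together with the non-increasing lower semicomputable sequence $\sigma_m=\sup_{n\ge m}p_n$, placing approximately $\sigma_{f(\ell)}2^\ell$ strings at length $\ell$. The paper's construction is a little simpler in that it dispenses with the block structure---it just takes $f(\ell)=\ell$ (your blocks of size $1$) and observes that since the per-length fraction $v_n\to x$, the cumulative density actually converges to $x$ (so a limit, not merely a $\limsup$); your explicit geometric-weight splitting is a fine substitute for that one-line Ces\`aro remark.
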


\begin{proof}[Proof of Lemma~\ref{lem:crit}]
In one direction: $u_n=X_n/2^{n+1}$ is a uniformly lower semicomputable sequence of reals, and one can easily show that the $\limsup$ of such a sequence is upper semicomputable relative to $\mathbf{0}'$. Indeed, $\limsup u_n<r$ means that there exist $r'<r$ and some $N$ such that $u_n \le r'$ for all $n\ge N$, so we get a $\exists\forall$-property or $r$ that corresponds to a $\mathbf{0}'$-enumerable set of rational numbers.

Reverse direction: assume that $x$ is upper semicomputable relative to $\mathbf{0}'$. It is known that in this case $x$ can be represented as $\limsup k_n$ for some computable sequence $k_n$ of rational numbers (see \cite{DowneyJockuschSchupp} or \cite{VUS2013}). Then $x=\lim_n \overline{v}_n$, where $v_n=\sup(k_n,k_{n+1},\ldots)$ form a uniformly lower semicomputable sequence. We may assume without loss of generality that $v_n\in[0,1]$ (since the limit is in $[0,1]$) and that $v_n$ are rational numbers with denominator $2^n$ (by rounding; note that the resulting sequence $v_n$ may not be computable, only lower semicomputable). Then we consider an enumerable set $X$ that contains exactly $v_n2^n$ strings of length $n$ (here we use that $v_n$ are lower semicomputable). It is easy to see that the upper density of $X$ is $x$; in fact, the density (the limit, not only $\limsup$) exists and is equal to $x$, since the fraction of $n$-bit strings in $X$ converges to $x$ as $n\to\infty$.
\end{proof}

It remains to show that for $x$ that are not only upper semicomputable relative to $\mathbf{0}'$ but also Martin-L\"of random relative to $\mathbf{0}'$, the set $X$ can be made a domain of an optimal machine. Our next step is the following simple observation.

\begin{lem}\label{lem:bit-add}
If some real $x\in[0,1]$ is the upper density of the domain of some optimal machine, the same is true for $x/2$ and $(1+x)/2$.
\end{lem}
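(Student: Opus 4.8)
The plan is to obtain both operations $x\mapsto x/2$ and $x\mapsto (1+x)/2$ by trivial syntactic surgery on a given optimal machine $U$ whose domain has upper density $x$: in each case the new machine $V$ will satisfy $\KS_V = \KS_U + O(1)$ (in fact $+1$), hence will again be optimal, while the number of halting inputs of length at most $n$ changes in a way we can track exactly, yielding the desired $\limsup$.

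First I would handle $x/2$. Let $V$ be the machine defined by $V(0w)=U(w)$ for all strings $w$, with $V$ undefined on the empty string and on every input starting with $1$. Then $\KS_V(y)=\KS_U(y)+1$ for all $y$, so $V$ is optimal; and writing $H_m$ for the number of strings of length at most $m$ in $\dom(U)$, the number $H'_n$ of strings of length at most $n$ in $\dom(V)$ equals exactly $H_{n-1}$ (these are the inputs $0w$ with $|w|\le n-1$ and $w\in\dom(U)$). Hence $\rho^V_n=H_{n-1}/2^{n+1}=\rho^U_{n-1}/2$, and taking $\limsup$ over $n$ gives that $\dom(V)$ has upper density $x/2$.

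Next, for $(1+x)/2$, let $V$ be defined by $V(1w)=\Lambda$ for all $w$, by $V(0w)=U(w)$ for all $w$, and $V$ undefined on the empty input. Again $\KS_V(y)\le\KS_U(y)+1$, so $V$ is optimal. Now $\dom(V)$ contains all $2^n-1$ strings of length at most $n$ that begin with $1$, together with the $H_{n-1}$ inputs of the form $0w$ with $w\in\dom(U)$ and $|w|\le n-1$; hence $H'_n=(2^n-1)+H_{n-1}$, so $\rho^V_n=(2^n-1)/2^{n+1}+\rho^U_{n-1}/2$. The first term tends to $1/2$, the second has $\limsup$ equal to $x/2$, and therefore the upper density of $\dom(V)$ is $(1+x)/2$.

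I do not expect any genuine obstacle here: the whole argument reduces to the two counting identities above plus the observation that prepending a single fixed bit to all descriptions raises plain complexity by exactly $1$. It is worth remarking, although the lemma does not require it, that both constructions also preserve effective optimality, since composing a length-bounded translator for $U$ with the map $g\mapsto 0g$ yields a length-bounded translator for $V$; this will be relevant for the "effectively optimal" clause of Theorem~\ref{th:classification}(e).
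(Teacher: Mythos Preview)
Your proof is correct and follows exactly the paper's approach: prepend a $0$ to all arguments to get $x/2$, and additionally make the machine defined on all strings starting with $1$ (with a dummy value) to get $(1+x)/2$, noting that complexity increases by at most $1$ so optimality is preserved. Your explicit counts and the remark about preservation of effective optimality are welcome additions but do not differ from the paper's (terser) argument.
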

(In terms of binary representation $x/2$ is $0x$, and $(1+x)/2$ is $1x$.) 

\begin{proof}
For $x/2$ we just ``shift'' the domain of the optimal machine by adding leading~$0$'s to all the arguments. For $(1+x)/2$ we do the same and also add all strings starting with $1$ to the domain (with arbitrary values, e.g., they all can be mapped to the empty string). In both cases the machine remains optimal, the complexity increases only by $1$.
\end{proof}

Deleting the first bit preserves randomness and semicomputability, so we may assume without loss of generality that $x$ (which is random and upper semicomputable relative to $\mathbf{0}'$) is smaller than $1/2$ (starts with $0$), and then apply Lemma~\ref{lem:bit-add} to add leading $1$'s.

Now we are ready to use another known result: \emph{every random upper semicomputable $x$ is Solovay complete among upper semicomputable reals} (all properties are considered relative to $\mathbf{0}'$); according to one of the equivalent definitions of Solovay completeness, this means that for every other upper semicomputable (relative to $\mathbf{0}'$) $y$ and for any large enough integer $N$ there exists another upper semicomputable (relative to $\mathbf{0}'$) $z$ such that  $x={y}/{N}+z$.

This result combines the work of Calude et al.~\cite{CaludeHKW1998} and Ku\v cera-Slaman~\cite{KuceraS2001} (see~\cite{BienvenuS2012} for a survey of these results). Technically these papers consider lower semicomputable reals instead of upper semicomputable ones, but randomness is stable under sign change, so this does not matter. Also we need a relativized version of their result; as usual, relativization is straightforward. 

So let us assume that $x\in(0,1/2)$ and $x=y/2^d+z$ where $y$ is the upper density for some optimal machine $U$ and $z$ is upper semicomputable relative to~$\mathbf{0}'$. (The large denominator $N$ is chosen to be a power of $2$.)  Now we combine two tricks used for Lemmas~\ref{lem:crit} and~\ref{lem:bit-add}. Namely, we apply Lemma~\ref{lem:crit} to $2z$ (note that $z<1/2$), and then add leading $1$'s to all the strings in the corresponding set. This gives us density $z$ while using only right half of the binary tree (strings that start with $1$). Then we add $d$ zeros to all strings in the domain of $U$ as we did when proving Lemma~\ref{lem:bit-add}; note that $d\ge 1$ since $x<1/2$. This gives us density $y/2^d$ using only the left half of the binary tree (actually, a small part of it, if $d$ is large). Then we combine both parts and get a machine $V$ that is optimal (since the left part is optimal) and has upper density $y/2^d+z$ as required. (Note that in general $\limsup$ is not additive, but in our case we have not only $\limsup$, but limit in one of the parts, so additivity holds.)

If we start with an effectively optimal machine $U$, the machine $V$ will also be effectively optimal (for obvious reasons).

Theorem~\ref{th:classification} is proven.
\end{proof}

\section{Approximations as mass problems}\label{sec:mass}

The approach to probabilistic computations used in Theorem~\ref{th:main-prob} looks natural from a computer science perspective. However, computability theorists would probably prefer another approach inspired by the notion of mass problems. A \emph{mass problem} is a set of total functions $\mathbb{N}\to\mathbb{N}$ (in other words, a subset of the Baire space $\mathbb{N}^\mathbb{N}$ of integer sequences). Its elements are called \emph{solutions} of this problem. A problem is \emph{solvable} if it has at least one computable solution; a problem $A$ is \emph{reducible} to $B$ if there is an oracle machine $\Gamma^X$ such that $\Gamma^X\in A$ whenever $X\in B$. The full name of this reducibility is \emph{Medvedev reducibility} or \emph{strong reducibility} (compared to \emph{weak reducibility}, or \emph{Muchnik reducibility}, where the oracle machine $\Gamma$ may depend on $X\in B$).  Many people studied the corresponding degree structure, called \emph{Medvedev lattice}.

It is interesting to study mass problems from the viewpoint of probabilistic computations. For example, one may ask whether a mass problem can be solved with probability $1$ by some randomized algorithm (such an algorithm, given random oracle, produces a solution with probability $1$). One may also ask whether a mass problem can be solved with positive probability. (Both properties are downward closed with respect to strong reducibility; the second one is closed also with respect to weak reducibility.)

The notion of a mass problem allows us to reformulate the results of Section~\ref{sec:approximation} in the following way. Let $A$ be a set that we want to decide (approximately). If $\alpha$ is a total function, we may measure how well $\alpha$ approximates the characteristic (indicator) function $\chi_A$ of $A$. For a given $n$ we consider $\varepsilon_n$, the fraction of strings of length at most $n$ where $\alpha$ and $\chi_A$ differ, and then we may define $\overline{E}(\alpha,A)=\limsup \varepsilon_n$ and $\underline{E}(\alpha,A)=\liminf \varepsilon_n$. Now we may consider, for a given set $A$, the mass problem 
$$
C(A)=\{\alpha: \overline{E}(\alpha,A)=0\};
$$
its solutions are total functions that compute $\chi_A$ on a set of density $1$. (As usual, we identify binary strings with integers, so $\alpha$ is both a function defined on strings and a sequence as required by the definition of mass problems. The density of a set $X$ of strings is defined as the limit of the fraction $\varkappa_n$ of strings of length at most $n$ that belong to $X$. If this limit does not exist, we may still speak about \emph{lower density} and \emph{upper density} defined as $\liminf \varkappa_n$ and $\limsup\varkappa_n$. )

If $C(A)$ has computable solutions, the set $A$ is called \emph{coarsely computable}. Theorem~\ref{th:main} implies that the domain of an optimal machine is \emph{not} coarsely computable. Moreover, the statement (a) of this theorem guarantees that an (easier) mass problem 
$
\{\alpha: \underline{E}(\alpha,A)=0\}
$
has no computable solution either.

Here the difference with the probabilistic setting becomes apparent: while the first problem $C(A)$ cannot be solved by a probabilistic algorithm with positive probability (see below Theorem~\ref{th:prob-neg}), the second one can, as the following result shows:

\begin{thm}\label{th:prob-pos}
For every enumerable set $A$ there exists a randomized algorithm that with positive probability computes a total function $\alpha$ such that $\underline{E}(\alpha,A)=0$.
\end{thm}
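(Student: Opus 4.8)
The goal is to produce a randomized algorithm that with positive probability outputs a total function $\alpha$ which agrees with $\chi_A$ on a set of lower density $1$, i.e.\ with $\underline E(\alpha,A)=0$: for infinitely many $n$ the fraction of errors among strings of length at most $n$ goes to $0$ along that subsequence. The key idea is that $A$ is enumerable, so we can always give \emph{provisional} answers by running the enumeration and switching a string from ``no'' to ``yes'' once it shows up in $A$; the only danger is that a string we currently label ``no'' later enters $A$. The randomness will be used to ``commit'' to not changing our answers past certain checkpoints, and we want to show that with positive probability all our commitments turn out to be correct on a sparse enough set of checkpoints that the residual error density tends to $0$ along a subsequence.

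Concretely, the plan is as follows. First I would fix a fast-growing computable sequence of checkpoint lengths $n_1<n_2<\cdots$ (to be specified, growing fast enough that strings of length $\le n_i$ are a negligible fraction of strings of length $\le n_{i+1}$, exactly as in the proof of Theorem~\ref{th:main}(c)). The algorithm reads its random bits in blocks, one block $b_i$ per checkpoint $n_i$; each block is interpreted as a guess for the number $H_{n_i}=\#\{x\in A:|x|\le n_i\}$, or more precisely for enough information to know which strings of length $\le n_i$ belong to $A$ in the ``final'' sense. The algorithm, on input $x$ with $n_i<|x|\le n_{i+1}$, enumerates $A$ for a bounded amount of time determined by the guesses $b_1,\dots,b_{i+1}$, outputs $1$ if $x$ has appeared, and $0$ otherwise; crucially it \emph{also} checks, using the enumeration, whether the guesses $b_1,\dots,b_{i+1}$ have so far been ``consistent'' (not yet contradicted), and if a guess is ever contradicted it simply outputs $0$ forever afterward. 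This keeps $\alpha$ total. Next I would argue: there is a positive-probability event on the random bits, namely that every block $b_i$ is the \emph{correct} value, i.e.\ $b_i$ equals the true count $H_{n_i}$ (or whatever finite datum we need). Since for each $i$ the correct value is a specific string whose length can be bounded a priori, the probability that a fixed finite block equals it is $\ge 2^{-c_i}$ for an explicit $c_i$ (roughly $c_i\approx n_i+O(1)$ if we encode $H_{n_i}$ directly, but we can do much better by encoding only the \emph{difference} $H_{n_{i+1}}-H_{n_i}$, or by encoding a short ``conditional'' description, so that $\sum_i c_i<\infty$ or at least the product of the success probabilities is positive). The hard point is to arrange the encoding so that $\prod_i 2^{-c_i}>0$, equivalently $\sum_i c_i<\infty$; this forces us to encode each checkpoint's new information in $O(1)$ expected bits, which we cannot do if $H_{n_i}$ itself is what we guess.

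The resolution — and the step I expect to be the main obstacle — is to change what the blocks guess. Instead of guessing $H_{n_i}$ outright, guess only whether the provisional answers computed from the \emph{previous} checkpoint's data are already final up to length $n_i$, together with a short patch: since $A$ is enumerable, the set of strings of length $\le n_i$ on which our current guess errs is finite, and if we are willing to accept a small residual error fraction $\varepsilon_i\to 0$ at checkpoint $n_i$, we only need to locate the $\le \varepsilon_i 2^{n_i+1}$ ``problem'' strings; but even naming them costs too much. The correct move is the one used for Theorem~\ref{th:main}(c) combined with the enumerability of $A$: we do not need the algorithm to know the problem strings, only to \emph{stop updating} at the right time. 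So let block $b_i$ be a single natural number $t_i$, interpreted as ``run the enumeration of $A$ for $t_i$ steps and then freeze''; the ``correct'' value of $t_i$ is ``large enough that all strings of length $\le n_i$ that will \emph{ever} enter $A$ have already entered by step $t_i$'' — but this correct $t_i$ is an arbitrarily large number, so a single random block cannot pin it down with positive probability either. The genuine fix, which I would carry out in detail, is a \emph{stochastic} one: the algorithm uses its random bits to sample a time bound from a distribution with heavy enough tails that with positive probability the sampled bound exceeds the (finite but unknown) true freezing time at \emph{every} checkpoint simultaneously; this is possible precisely because ``freezing too late'' is harmless here — it only makes $\alpha$ \emph{more} correct — so the only bad event is freezing too early, and by a Borel--Cantelli / product argument with rapidly increasing sample distributions we can make the probability of never freezing too early (at any checkpoint) positive, while the rapid growth of the $n_i$ guarantees that on the subsequence $n_i$ the error fraction is bounded by the contributions of the finitely many un-enumerated strings plus the negligible lower-length strata, which tends to $0$. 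I would then verify totality and the lower-density-$1$ conclusion, and note that the same construction works verbatim for the domain of an optimal machine, giving the stated theorem.
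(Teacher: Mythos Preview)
Your proposal has a genuine gap in the final step. You want to fix a computable sequence of checkpoints $n_1<n_2<\cdots$ and then sample time bounds $t_i$ from heavy-tailed distributions so that, with positive probability, $t_i\ge T_i$ for every~$i$ (or even just for infinitely many~$i$), where $T_i$ is the time by which all strings of length $\le n_i$ that ever enter $A$ have already entered. This cannot work for a general enumerable~$A$: once the checkpoints are computable, the sequence $(T_i)$ can dominate every computable function (for $A=\dom(U)$ with $U$ optimal, $T_i$ is essentially $\BB(n_i)$). Hence for \emph{any} sampling scheme the randomized algorithm can implement, $\Pr[t_i\ge T_i]\to 0$ so fast that both the product $\prod_i\Pr[t_i\ge T_i]$ and the sum $\sum_i\Pr[t_i\ge T_i]$ vanish or converge, and neither the ``all $i$'' nor the Borel--Cantelli ``infinitely many $i$'' version goes through. ``Heavy tails'' do not help: no computable distribution has tails heavy enough to put non-negligible mass above an essentially busy-beaver threshold.

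The paper's proof avoids this by \emph{not} fixing the checkpoints in advance. It interprets the random oracle as a sequence of pairs $(n_i,N_i)$ and appeals to Kurtz's theorem that a $1$-generic sequence can be produced with positive probability. The point is that for any rational $r<\limsup\rho_n$ and any finite prefix of choices, the set of extensions $(n_i,N_i)$ making the error at $n_i$ drop below a given threshold is a dense effectively open condition; genericity then guarantees infinitely many such hits. In other words, the missing idea is that the checkpoints themselves (not just the time bounds) must be chosen by the randomness, so that the algorithm can land on those lengths where $\rho_n$ is close to its $\limsup$---lengths that cannot be located by any fixed computable schedule.
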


In other terms, there exists an oracle machine $\Gamma$ such that the set of oracles $X$ for which $\Gamma^X$ computes a total function $\alpha$ such that $\underline{E}(\alpha,A)=0$ has positive probability according to the uniform Bernoulli distribution on the Cantor space (corresponding to independent fair coin tosses).

\begin{proof}
To make the proof shorter, we use known results about generic sequences. A sequence~$G$ in the Baire space is called \emph{generic} (more precisely, $1$-generic) if for every effectively open set $U$ it is contained either in $U$ or in the interior part of the complement of $U$.  In other terms, for every enumerable set $W$ of finite sequences of integers, either $G$ has a prefix in $W$, or $G$ has a prefix that has no extensions in $W$. It is easy to see (using the Baire category theorem) that generic sequences exists. Moreover, as Kurtz has shown (see~\cite[Theorem 8.21.3]{DowneyH2010}), generic sequences can be generated by a randomized algorithm with positive probability. So it remains to prove the following lemma:

\begin{lem}\label{lem:generic-use}
There exists an oracle machine $\Psi$ such that for every generic $G$ the machine $\Psi$ with oracle $G$ computes a total function $\alpha$ with $\underline{E}(\alpha,A)=0$.
\end{lem}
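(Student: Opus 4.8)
The plan is to build the oracle machine $\Psi$ so that, using a generic oracle $G$, it can guess the values $x_n = \#\{x : |x|=n, x\in A\}$ (or rather the cumulative counts $A_n$) infinitely often \emph{exactly}, with the generic oracle providing the guesses and genericity guaranteeing infinitely many correct guesses. First I would set up the following: on oracle $G \in \mathbb{N}^{\mathbb{N}}$, the machine $\Psi$ reads successive blocks of $G$; the $i$-th block is interpreted as a pair $(n_i, v_i)$ where $n_i$ is a candidate length and $v_i$ a candidate value for the number $A_{n_i}$ of strings of length at most $n_i$ in $A$. Given such a guess, $\Psi$ enumerates $A$ until it has found $v_i$ many strings of length at most $n_i$ (if $v_i$ was an overestimate this search never terminates, but that is fine — see below); once this happens, $\Psi$ \emph{knows} $A$ restricted to lengths $\le n_i$ completely, and outputs the correct characteristic function on all those strings, declaring $\alpha(x)=0$ on all other strings (or on strings it has not yet resolved). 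Then it moves on to read the next block of $G$ and repeat with a fresh, larger guess, overwriting its earlier tentative outputs on the newly covered lengths.

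The key point is to make $\alpha$ total no matter what $G$ is, and correct on a density-$1$ set for infinitely many lengths when $G$ is generic. Totality: $\Psi$ should dovetail — it runs the current guess's search in parallel with outputting the default value $0$ on every string, committing to the correct value on a string only if and when the search that covers it terminates; so every $\alpha(x)$ eventually gets \emph{some} value, and once a covering search terminates the value becomes the correct one permanently (until possibly overwritten by an even later, larger successful guess, which only improves things). This makes $\alpha$ a total function computable from $G$. Correctness infinitely often: consider the enumerable set $W$ of finite strings $\sigma$ of integers such that $\sigma$ ends with a block $(n,v)$ for which the search "enumerate $A$ until $v$ strings of length $\le n$ are found" terminates and $v$ is in fact a \emph{strict} overestimate of $A_n$ (so the true characteristic function disagrees with what $\Psi$ would output) — actually it is cleaner to take $W$ = strings whose last block $(n,v)$ is "bad" in the sense that $v \ne A_n$ but the bounded search still halts, which for an overestimate never happens, and for an underestimate also makes $\Psi$ finish its search with a \emph{wrong} count. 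The honest way: let $W$ be the enumerable set of finite sequences $\sigma$ extending which no block can ever correctly guess a fresh length, i.e. a "dead end"; since the true values $A_n$ do occur as possible blocks, from every position there \emph{is} an extension realizing a correct-and-larger guess, so $G$ generic cannot have a prefix with no extension in the "good" enumerable set — hence $G$ has infinitely many prefixes triggering a correct guess for arbitrarily large lengths $n_i$. For each such $n_i$, $\alpha$ agrees with $\chi_A$ on \emph{all} strings of length $\le n_i$, so $\varepsilon_{n_i}(\alpha)$ counts only errors on lengths $> n_i$ that have not yet been overwritten — but those errors, relative to the $2^{n_i+1}$ strings of length $\le n_i$... wait, that is the wrong direction; the right statement is that for the length $n_i$ itself, $\varepsilon_{n_i}=0$ exactly, giving $\liminf_n \varepsilon_n(\alpha) = 0$.

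The main obstacle, and the step I would be most careful about, is formalizing "genericity forces infinitely many correct large guesses." One must exhibit an enumerable set $W$ of finite integer-sequences such that (i) having a prefix in $W$ forces a correct guess of $A_m$ for some $m$ exceeding all previously guessed lengths, and (ii) no finite sequence has the property that none of its extensions lie in $W$ — because the correct value $A_m$ is always an available block, so one can always extend any $\sigma$ by appending $(m, A_m)$ for a suitably large $m$ to land in $W$. Then for a $1$-generic $G$: since $G$ cannot avoid $W$ by landing in the interior of its complement (that interior is empty by (ii)), $G$ must have a prefix in $W$; applying this to the tail of $G$ past any finite point (genericity is inherited by tails, or one re-does the argument with $W$ shifted), $G$ has \emph{infinitely many} prefixes in $W$, hence correct guesses for unboundedly large lengths. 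Combined with the totality argument above, this yields $\underline{E}(\alpha, A)=0$, proving Lemma~\ref{lem:generic-use} and therefore Theorem~\ref{th:prob-pos}. The remaining routine verifications — that the overwriting does not destroy totality, and that a terminating bounded search indeed pins down $A$ exactly on the relevant lengths — I would handle with a sentence each.
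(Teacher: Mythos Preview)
Your overall strategy---read parameters from the generic oracle and invoke density of a $\Sigma_1$ set to force good behaviour infinitely often---is exactly the paper's. The gap is in the choice of what to guess. You ask $G$ to supply blocks $(n,v)$ with $v=A_n$ exactly, and then need an \emph{enumerable} dense set $W$ whose members are sequences ending in such a correct block. But no such $W$ exists when $A$ is undecidable: projecting $W$ to its last block yields a $\Sigma_1$ subset of the graph $\{(n,A_n):n\in\mathbb{N}\}$, and your ``fresh length'' requirement forces this projection to be infinite; an infinite enumerable subset of that graph decides $A$ outright (given $x$, enumerate the projection until a pair $(n,A_n)$ with $n\ge|x|$ appears, then enumerate $A$ until $A_n$ elements of length $\le n$ are found, and read off the answer for $x$). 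So $1$-genericity cannot be applied to the set you describe, and the step ``genericity forces infinitely many exactly-correct guesses'' fails. Your totality discussion (``overwriting'' committed values) is also shaky---a function cannot change its mind---but this is secondary and could be patched; the enumerability obstacle is the real problem.

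The paper sidesteps this by having $G$ supply \emph{time bounds} rather than counts: block $i$ is read as $(n_i,N_i)$, and on lengths in $[n_{i-1},n_i)$ the function $\alpha$ is simply the result of enumerating $A$ for $N_i$ steps. This makes $\alpha$ total automatically, and the dense $\Sigma_1$ sets one actually meets are, for each rational $r$ below $\overline\rho=\limsup\rho_n$, the set of parameter sequences for which the fraction of positive answers up to length $n_i$ exceeds $r$---a condition verified by running $N_i$ steps and counting, hence genuinely enumerable. Meeting this set bounds the error at length $n_i$ by roughly $\overline\rho-r$; since a single generic $G$ meets all such sets as $r\to\overline\rho$, one gets $\underline{E}(\alpha,A)=0$. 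The moral is that aiming for \emph{approximate} rather than exact correctness at each stage is what makes the relevant open sets effectively open.
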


\begin{proof}[Proof of the Lemma~\ref{lem:generic-use}]
The function $\alpha$ is constructed as follows: we split all the possible lengths into consecutive intervals by thresholds $n_0<n_1<n_2<\ldots$, and for lengths in $[n_{i-1},n_{i})$ we run the enumeration of $A$ for $N_i$ steps (and use the resulting part of $A$ for $\alpha$). Here $n_i$ and~$N_i$ are parameters that are taken from the generic sequence $G$; we interpret the $n$th term $G_n$ as an encoding of some pair $(n_i,N_i)$.

Why does this help? Let us prove that for infinitely many values of $n$ the fraction of errors among strings of length at most $n$ is small. Let $\rho_n$ be the fraction of strings of length at most~$n$ that belong to $A$, and let $\rho$ be the $\limsup$ of $\rho_n$. Consider a rational number $r$ that is strictly smaller than $\rho$ but very close to it. Whatever the number $n_0,\ldots, n_{i-1}$ and $N_0,\ldots,N_{i-1}$ (used in the construction of $\alpha$) are, it is always possible to choose $n_i$ and $N_i$ in such a way that the density of $\alpha$ among strings of length at most $n_i$ exceeds $r$. This inequality, considered as a property of finite sequence $(n_0,N_0),\ldots,(n_i,N_i)$, is (computably) enumerable, so we get a dense effectively open set. According to the definition of genericity, the sequence $G$ should have prefix in this effectively open set (its complement has an empty interior). The same argument can be applied to sequences of lengths greater than some threshold, so we conclude that for generic $G$ the corresponding sequence $\alpha$ has infinitely many prefixes for which the error density $\varepsilon_n$ is small. 

There is a small technical detail in this argument: we need to ensure that $n_i\gg n_{i-1}$, so the strings of length at most $n_{i-1}$ form an asymptotically negligible fraction in the set of all strings of length at most~$n_i$. But this is easy to achieve: we may require, for example, that $n_i>n_{i-1}+i$.
\end{proof}

Theorem~\ref{th:prob-pos} is proven.
\end{proof}

The statement (b) of Theorem~\ref{th:main} remains true for randomized algorithms. Consider again an optimal machine $U$ and its domain $\dom(U)$.

\begin{thm}\label{th:prob-neg}
There exists some $\varepsilon>0$ such that no randomized algorithm computes $\alpha$ with $\overline{E}(\alpha,\dom(U))<\varepsilon$ with positive probability.
\end{thm}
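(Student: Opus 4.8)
The plan is to reduce the statement to the probabilistic version of Theorem~\ref{th:main}(b) already established in Theorem~\ref{th:main-prob}(b). Let $\varepsilon_{0}>0$ be the constant that Theorem~\ref{th:main-prob}(b) provides for the fixed optimal machine $U$, and fix any $\varepsilon<\varepsilon_{0}$; we will show that this $\varepsilon$ works. Suppose, towards a contradiction, that there is an oracle machine $\Gamma$ such that the set
$$
\mathcal{G}=\{X\in 2^{\omega}\colon \Gamma^{X}\text{ is total and }\overline{E}(\Gamma^{X},\dom(U))<\varepsilon\}
$$
has positive measure $\delta$. For $X\in 2^{\omega}$ and $n\in\mathbb{N}$, write $\varepsilon_{n}(\Gamma^{X},\dom(U))\in[0,1]$ for the fraction of $p$ with $|p|\le n$ on which $\Gamma^{X}(p)$ either disagrees with $\chi_{\dom(U)}(p)$ or is undefined; this is a measurable function of $X$ (it depends only on the finitely many measurable events ``$\Gamma^{X}(p)\!=\!0$'' and ``$\Gamma^{X}(p)\!=\!1$'' for $|p|\le n$), and $\overline{E}(\Gamma^{X},\dom(U))=\limsup_{n}\varepsilon_{n}(\Gamma^{X},\dom(U))$. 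In particular $\mathcal{G}$ is measurable, being the intersection of the $\Pi^{0}_{2}$ set $\{X\colon\Gamma^{X}\text{ total}\}$ with the set $\{X\colon\limsup_{n}\varepsilon_{n}(\Gamma^{X},\dom(U))<\varepsilon\}$.

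The first, naive idea would be to turn $\Gamma$ into a randomized algorithm $A$ that feeds $\Gamma$ its own random bits as an oracle and returns $\Gamma^{X}(p)$. Averaging over the input gives $\varepsilon_{n}(A)=\int\varepsilon_{n}(\Gamma^{X},\dom(U))\,d\mu$, and since $0\le\varepsilon_{n}\le 1$ the reverse Fatou lemma yields $\overline{E}(A)=\limsup_{n}\int\varepsilon_{n}\,d\mu\le\int\limsup_{n}\varepsilon_{n}\,d\mu=\int\overline{E}(\Gamma^{X},\dom(U))\,d\mu\le\delta\varepsilon+(1-\delta)$; this bound is in general too weak to clash with Theorem~\ref{th:main-prob}(b). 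The fix is to boost the ``fraction of good oracles'' by conditioning on a density point. Since $\mathcal{G}$ is measurable with $\mu(\mathcal{G})>0$, the Lebesgue density theorem provides, for any prescribed $\gamma>0$, a finite string $\sigma$ with $\mu(\mathcal{G}\cap[\sigma])\ge(1-\gamma)\mu([\sigma])$. Fix such a $\sigma$ with $\gamma<\varepsilon_{0}-\varepsilon$, and let $A'$ be the randomized algorithm that on input $p$ runs $\Gamma$ with the oracle consisting of $\sigma$ followed by its own random bits, and returns the result. Here $\sigma$ is a finite piece of advice hardwired into $A'$, and (as usual) the random bits used by the computation are independent of the uniformly chosen input $p$, so $A'$ is a legitimate probabilistic algorithm in the sense of Theorem~\ref{th:main-prob}. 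Writing $\mu_{\sigma}$ for the normalized measure on $[\sigma]$, one gets $\varepsilon_{n}(A')=\int_{[\sigma]}\varepsilon_{n}(\Gamma^{X},\dom(U))\,d\mu_{\sigma}$, and reverse Fatou together with $\overline{E}(\Gamma^{X},\dom(U))<\varepsilon$ on $\mathcal{G}\cap[\sigma]$ and $\le 1$ elsewhere gives
$$
\overline{E}(A')\le\int_{[\sigma]}\overline{E}(\Gamma^{X},\dom(U))\,d\mu_{\sigma}\le\varepsilon\cdot 1+1\cdot\gamma=\varepsilon+\gamma<\varepsilon_{0}.
$$

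But Theorem~\ref{th:main-prob}(b) asserts that $\overline{E}(B)\ge\varepsilon_{0}$ for \emph{every} probabilistic algorithm $B$, in particular for $A'$ --- a contradiction. Hence no such $\Gamma$ can exist, and $\varepsilon=\varepsilon_{0}/2$ (say) witnesses the theorem.

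The conceptual core --- and the one step I expect to require care to phrase correctly --- is the passage from a solution that merely succeeds with positive probability in the Medvedev sense to an \emph{average-case} approximation of the kind ruled out by Theorem~\ref{th:main-prob}(b): conditioning on a density cylinder is exactly what replaces the useless estimate ``good fraction $\ge\delta$'' by the usable ``good fraction $\ge 1-\gamma$'', and this is precisely the gap between the two notions of randomized solvability discussed just before the theorem. The remaining ingredients are routine: measurability of $\mathcal{G}$ (noted above, needed so that Lebesgue density applies), the interchange of $\limsup_{n}$ with the integral (the reverse Fatou lemma, valid because $0\le\varepsilon_{n}\le 1$), and the elementary identity $\varepsilon_{n}(A')=\int_{[\sigma]}\varepsilon_{n}(\Gamma^{X},\dom(U))\,d\mu_{\sigma}$ obtained by averaging over the input.
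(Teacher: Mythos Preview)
Your proof is correct and takes a genuinely different route from the paper's. The paper proves a new complexity lemma (Lemma~\ref{lem:most-general}) bounding $\KP(H_n\cnd n)$ in terms of $\log(1/p(\Gamma,n,d))$, then---after the same Lebesgue-density reduction you use---argues that the event ``$\limsup$ of the error rate is below $2^{-d}$'' is contained in $\liminf_n T_n$, so each $T_n$ eventually has measure at least $1/3$, and the lemma forces $d-O(\log d)\le\KP(\Gamma\cnd n)+O(1)$, a contradiction for infinitely many $n$. You instead treat Theorem~\ref{th:main-prob}(b) as a black box and reduce to it via the reverse Fatou inequality $\limsup_n\int\varepsilon_n\le\int\limsup_n\varepsilon_n$ (valid since $0\le\varepsilon_n\le1$). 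This is more modular: no new complexity estimate is needed, and the passage from ``positive probability'' to ``probability $\ge 1-\gamma$'' via a density cylinder is exactly the same in both proofs. The paper's approach, on the other hand, makes the dependence on $\KP(\Gamma\cnd n)$ explicit and yields the quantitative Lemma~\ref{lem:most-general}, which may be of independent interest.

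One cosmetic point: you define $\mathcal{G}$ with the extra clause ``$\Gamma^X$ is total'', but the theorem (as noted right after its statement) also covers partial $\alpha$, with undefined points counted as errors. Since your $\varepsilon_n(\Gamma^X,\dom(U))$ already counts undefined values as errors, simply drop the totality clause from $\mathcal{G}$; every line of your argument goes through unchanged, and then you have proved the full statement rather than only the total case.
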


The value of $\varepsilon$ with this property depends on the choice of the optimal machine~$U$. In this statement we may allow $\alpha$ to be non-total (and consider the places where $\alpha$ is undefined, as error points). This makes the interpretation of this result in terms of mass problems a bit more difficult, since mass problems are (by definition) sets of \emph{total} functions. But it is nevertheless possible: we can consider the mass problem of enumerating the graph of a partial function $\alpha$ that approximates $\dom(U)$. For the proof however, this interpretation is not needed, and we do not go into details.

\begin{proof}
For this proof we need to extend the inequality used to prove Theorem~\ref{th:main}.

\begin{lem}\label{lem:most-general}
Consider some oracle machine $\Gamma$  with random oracle $X$ that tries to decide $\dom(U)$.  For some length $n$ and for some threshold $d$ consider the event: ``the fraction of errors made by $\Gamma^X$ on strings of length at most $n$ is at most $2^{-d}$''\textup; let $p(\Gamma,n,d)$ be its probability. Then
$$
\KP(H_n\cnd n)\le O(\log d)+(n-d)+\KP(\Gamma\cnd n)+\log\frac{1}{p(\Gamma,n,d)} 
$$
where $H_n$ is the number of strings of length at most $n$ in $\dom(U)$.
\end{lem}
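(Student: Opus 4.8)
The plan is to mimic the proof of Theorem~\ref{th:main}(a)--(b), but carry a description of the random oracle along as extra advice, paying for it by the (negative) log of the probability of the good event. Fix $\Gamma$, $n$ and $d$, and suppose the event in the statement --- that $\Gamma^X$ makes at most a $2^{-d}$-fraction of errors on strings of length at most $n$ --- has probability $p=p(\Gamma,n,d)>0$. The key point is that we can describe $H_n$ given $n$ by specifying: (i) a self-delimited program for $\Gamma$ (length $\KP(\Gamma\cnd n)$), (ii) a self-delimited program for $d$ (length $O(\log d)$), (iii) an oracle $X$ in the ``good'' event, and (iv) the list of positions (among strings of length $\le n$) where $\Gamma^X$ disagrees with $\chi_{\dom(U)}$ --- but there are at most $2^{n-d}$ such positions, and one codes this correction with about $n-d+O(1)$ bits.

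First I would make precise how the oracle is supplied for (iii). We cannot write down the (infinite) oracle, so instead we use a counting/entropy argument: let $\mathcal{G}\subseteq 2^\mathbb{N}$ be the clopen-measurable set of oracles realizing the good event, so $\mu(\mathcal{G})=p$. Run $\Gamma^X$ on all strings of length $\le n$; the outcome depends only on finitely many oracle bits, so $\mathcal{G}$ is a union of cylinders, and we may pick a prefix-free set of ``witnessing'' strings $w$ with total measure $p$, each of which determines the full behaviour of $\Gamma^{\cdot}$ on inputs of length $\le n$. By the Kraft inequality there is one such $w$ with $|w|\le \log(1/p)+O(1)$; equivalently, $\KP$ of ``a witness, relative to $n$ and $\Gamma$'' is at most $\log(1/p)+O(1)$, since we can enumerate cylinders contributing to $\mathcal{G}$ and dovetail. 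Concatenating the self-delimited pieces for $\Gamma$, for $d$, for the witness $w$, and finally the $n-d+O(1)$ correction bits, we reconstruct: the behaviour of $\Gamma^{w}$ on all length-$\le n$ inputs (from $w$ and $\Gamma$), then apply the corrections to recover $\chi_{\dom(U)}$ on those strings, then count to get $H_n$. This yields
$$
\KP(H_n\cnd n)\le \KP(\Gamma\cnd n)+O(\log d)+\log\frac{1}{p(\Gamma,n,d)}+(n-d)+O(1),
$$
which is the claimed bound.

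The main obstacle I expect is the bookkeeping around the oracle witness: we must be careful that ``the fraction of errors is $\le 2^{-d}$'' is an event determined by finitely many oracle bits (true, since it only looks at outputs on length-$\le n$ inputs, but $\Gamma^X$ might not halt on some such inputs for some $X$ --- non-halting counts as an error, and we need the good event to be at least $\Sigma^0_1$-approximable from below in measure so that we can enumerate witnessing cylinders). The clean way is to note that for each string $x$ of length $\le n$ the set of oracles on which $\Gamma^X(x)\!\downarrow$ with a given value is effectively open, so the event ``$\Gamma^X$ halts on at least $2^{n+1}-2^{n-d}$ of these strings with answers matching $\chi_{\dom(U)}$ up to $2^{-d}$ error fraction'' is a finite Boolean combination of effectively open events --- and its measure can be approximated from below, with certified clopen subsets of measure approaching $p$. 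Picking a clopen subset of measure $\ge p/2$, say, only changes $\log(1/p)$ by $O(1)$. The rest --- packing $d$ into $O(\log d)$ self-delimited bits, and the $2^{n-d}$ error positions into $n-d+O(1)$ bits via their count and index --- is exactly as in the proof of Theorem~\ref{th:main} and needs no new idea.
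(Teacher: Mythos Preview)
Your overall architecture is right --- describe $H_n$ from $\Gamma$, $d$, some oracle information, and an $(n-d)$-bit correction --- but the step where you produce the oracle witness has a genuine gap.

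First, the Kraft claim is simply false. A prefix-free family of cylinders of total measure $p$ need \emph{not} contain a single cylinder of length $\le \log(1/p)+O(1)$: take, for instance, all length-$k$ strings with even parity (measure $1/2$, yet every cylinder inside has length $\ge k$). So ``by Kraft there is one such $w$ with $|w|\le\log(1/p)+O(1)$'' does not follow. Your fallback --- that some witness has $\KP(w\cnd n,\Gamma)\le\log(1/p)+O(1)$ because we can enumerate the cylinders of $\mathcal{G}$ --- fails for a different reason: the ``good'' event $\mathcal{G}$ is \emph{not} effectively open. To test whether $\Gamma^X$'s answers match $\chi_{\dom(U)}$ on all but a $2^{-d}$-fraction of inputs you must already know $\chi_{\dom(U)}$ on strings of length $\le n$, i.e., essentially the object $H_n$ you are trying to describe. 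Your sentence ``a finite Boolean combination of effectively open events'' is literally true, but the Boolean combination itself depends on the noncomputable $\chi_{\dom(U)}$, so you cannot effectively certify clopen subsets of $\mathcal{G}$.

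The paper avoids this circularity by not trying to name a single witness. It first shows that for every \emph{good} oracle $X$ one has $\KP^{X}(H_n\cnd n)\le O(\log d)+(n-d)+\KP(\Gamma\cnd n)$ --- here $X$ is available as an oracle, so the decoder can simply run $\Gamma^X$, count positive answers, and add the $(n-d)$-bit correction; no enumeration of $\mathcal{G}$ is needed. Then it invokes the averaging lemma: if $\Pr_X[\KP^{X}(a\cnd b)\le k]\ge p$ then $\KP(a\cnd b)\le k+\log(1/p)+O(1)$, proved by integrating the oracle-relativized a priori probability $\m^X(a\cnd b)$ over $X$ and observing that the result is a lower-semicomputable semimeasure. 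This is exactly the ``sum over all good $w$'' that your argument would need in place of the nonexistent single short witness.

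A minor point: your item (iv) should be the signed difference between $H_n$ and the count of positive answers, not ``the list of error positions'' --- the latter would cost roughly $2^{n-d}\cdot d$ bits, not $n-d$.
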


For deterministic algorithms (and $p=1$) we have seen this bound earlier in the proof of Theorem~\ref{th:main} with $\KS(H_n\cnd n)$ instead for $\KP(H_n\cnd n)$ (we did not need $\KP(H_n\cnd n)$ there, but the same argument would work for $\KP$ too). Now the prefix-free version of complexity is technically convenient since it is better adapted to oracles and probability.

\begin{proof}[Proof of Lemma~\ref{lem:most-general}]
Let $X$ be a ``good'' value of the oracle for which this event happens. Assume that we know $n$ and $d$ is given as advice. Then we wait until the machine $\Gamma^X$ provides answers for all strings of length at most $n$ except for a $2^{-d}$ fraction, and count the number of positive answers. This number differs from $H_n$ by $O(2^{n-d})$, and the difference can be specified by $n-d+O(1)$ bits (in a self-delimited way when $n$ and $d$ are known, so we can put prefix complexity $\KP$ in the left-hand side). So we get
$$
\KP^X(H_n\cnd n)\le O(\log d)+(n-d)+\KP(\Gamma\cnd n) 
$$
(complexity is relativized by $X$, the length $n$ is given as a condition, and a self-delimiting description of $d$ uses only $O(\log d)$ bits). It remains to apply a general statement about complexity: \emph{if for fixed $a$, $b$, $k$ and for random $X$ the inequality $\KP^X(a\cnd b)\le k$ holds with probability at least~$p$, then $\KP(a\cnd b)\le k+\log\frac{1}{p}+O(1)$}. This statement can be easily proven by switching to a priori probabilities and  taking the average over random oracles $X$.
\end{proof}

To prove the theorem, we need to find $\varepsilon>0$ such that no randomized algorithm can compute~$\alpha $ such that $\overline{E}(\alpha,\dom(U))<\varepsilon$ with positive probability. First of all we note that the Lebesgue density theorem implies that every set of positive measure forms a majority in some interval, and we can consider only oracles from this interval. So we can replace ``with positive probability'' by ``with probability greater than $1/2$'' without loss of generality. (Any other constant instead of $1/2$ will work, too, but it is important to have some fixed constant since the choice of $\varepsilon$ should not depend on the success probability.)

Assume that for some $d$ and for some oracle machine $\Gamma^X$ with random oracle $X$ the event ``\emph{$\limsup$ of error rate of $\Gamma^X$ is less than $2^{-d}$}'' has probability at least $1/2$. For each $n$ there is a set $T_n$ of oracles that give an error rate of at most $2^{-d}$ on strings of length at most $n$. The event mentioned above (that has probability at least $1/2$) is included in $\liminf T_n =\cup_{N}\cap_{n\ge N} T_n$. If this event has probability at least $1/2$, then some intersection $\cap_{n\ge N} T_n$ has probability at least $1/3$. This implies that probabilities of $T_n$ for all sufficiently large $n$ are at least $1/3$. Applying Lemma~\ref{lem:most-general} and the inequality $\KP(H_n\cnd n)\ge \KS(H_n\cnd n)\ge n$ (that holds with $O(1)$-precision), we conclude that $d-O(\log d)\le \KP(\Gamma\cnd n)+O(1)$ for sufficiently large $n$, where the constant in $O(1)$ does not depend on $\Gamma$ and $n$. Now, knowing this constant and recalling that for every $\Gamma$ the complexity $\KP(\Gamma\cnd n)$ is $O(1)$ for infinitely many $n$, we may choose $\varepsilon$ small enough, and finish the proof of Theorem~\ref{th:prob-neg}.
\end{proof}

For the last result in this section we should recall two paradigms of approximate computation that have received a lot of attention in the recent literature, the so-called \emph{coarse computability} and \emph{generic computability} (see~\cite{DowneyJockuschSchupp}). We already considered, given some set $A$, the mass problem ``coarsely compute $A$''  that consists of total functions $\alpha$ that coincide with $\chi_A$ on a set of density~$1$. If this problem has a computable solution, i.e., if there exists a total computable function $\alpha$ that coincides with $\chi_A$ on a set of density $1$, the set $A$ is called \emph{coarsely computable}.
{\looseness=1

}
 Generic computability of $A$ is defined in a similar way, but now wrong answers are not allowed, and $\alpha$ may be a non-total function. As we have discussed, this does not give a mass problem in Medvedev sense directly, but we may consider a mass problem of enumerating the graph of~$\alpha$. If this problem is solvable, there exists a computable partial function $\alpha$ defined on a set of density $1$ that never gives wrong answers. In this case $A$ is called \emph{generically computable}.

These two notions of approximate computability are incomparable: an enumerable set can be coarsely computable but not generically computable and vice-versa (see~\cite{DowneyJockuschSchupp}).

Using this language, we can say that Theorem~\ref{th:prob-pos} is about coarse computability (we get there a total approximating function with errors) while Theorem~\ref{th:prob-neg} can be applied to both notions (and even to their combination where we allow partial functions and errors at the same time). Now we state two results about the generic model:

\begin{thm}\label{th:prob-generic}\hfill
\begin{enumerate}[label=\({\alph*}]

\item Let $U$ be an optimal machine. There is no randomized algorithm which with positive probability computes a partial function $\alpha$ that makes no errors for $\dom(U)$ and is defined on a set of upper density $1$.

\item Whether $\dom(U)$ is $(1-\varepsilon)$-i.o.-generically probabilistically computable or not depends on the particular choice of machine~$U$.
\end{enumerate}
\end{thm}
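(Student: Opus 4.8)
For part (a), the plan is to adapt the argument of Theorem~\ref{th:prob-neg} to the generic setting, where all undefined points count as errors. Suppose, for contradiction, that some oracle machine $\Gamma$ produces with positive probability a partial function $\alpha$ that never errs on $\dom(U)$ and is defined on a set of upper density~$1$. By the Lebesgue density theorem we may assume the success probability exceeds $1/2$. Fix a small $\varepsilon>0$ (to be determined) and consider, for each $n$, the set $T_n$ of oracles $X$ such that $\Gamma^X$ is defined on at least a $(1-\varepsilon)$-fraction of strings of length at most~$n$. Since the success event has upper density~$1$, it is contained in $\limsup_n T_n$; intersecting with the success event and using that the success probability exceeds $1/2$, we obtain that $\mathbf{P}(T_n)$ is bounded below by a fixed constant (say $1/3$) for infinitely many~$n$. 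Now the key point is that on a good oracle $X\in T_n$, the value $\Gamma^X$ wherever defined is \emph{correct} (no errors allowed), so the number of positive answers on strings of length at most~$n$ is exactly $H_n$ minus the number of elements of $\dom(U)$ on which $\alpha$ is undefined, which is at most $\varepsilon\cdot 2^{n+1}$. Hence $H_n$ can be recovered from $n$, from $\Gamma$, from a description of the (at most $\varepsilon 2^{n+1}$) missing positive answers --- which costs roughly $(n+1+\log(1/\varepsilon^{?}))$ or more precisely $\log\binom{2^{n+1}}{\varepsilon 2^{n+1}} \approx H(\varepsilon)\cdot 2^{n+1}$ bits --- and, via the averaging lemma from the proof of Lemma~\ref{lem:most-general}, a $\log(1/\mathbf{P}(T_n))=O(1)$ term. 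This yields $\KP(H_n\cnd n)\le H(\varepsilon)\cdot 2^{n+1}+\KP(\Gamma\cnd n)+O(1)$, which is useless as stated --- and here is where the argument really differs from Theorem~\ref{th:prob-neg}: the trivial bound $\KP(H_n\cnd n)\le n+O(1)$ is already smaller than $H(\varepsilon)2^{n+1}$. The fix is to not describe the missing positive answers by their positions among \emph{all} strings, but rather to observe that on a good oracle one gets a partial algorithm solving the halting problem without errors on a density-$(1-\varepsilon)$ set, and then invoke the sparse-simple-set machinery of Lemma~\ref{lem:sparse-simple} exactly as in the proof of Theorem~\ref{th:main}(d): compose with the length-bounded injective reduction of a sparse simple set $S$ into $\dom(U)$; the image has positive lower density $\delta$; an errorless partial decider for $S$ must be undefined almost everywhere on $S$ (positive answers only on the sparse set $S$, finitely many negative answers since $S$ is simple), so $\alpha$ composed with the reduction is undefined on a set of lower density $\delta-o(1)>0$ inside the image, forcing the undefined set of $\alpha$ itself to have lower density bounded below by a positive constant independent of the algorithm. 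Choosing $\varepsilon$ smaller than this constant gives the contradiction, and this also tells us that the proof does not even need probabilistic machinery beyond the density reduction to $1/2$ --- the obstruction is already present for every fixed oracle in the success set.

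For part (b), the plan is to exhibit the two sides by the same two examples used throughout for the analogous splitting in Theorem~\ref{th:main}(d). For the \emph{positive} direction we take $U$ left-total: the construction in the proof of Theorem~\ref{th:main}(d) already produces, for every $\varepsilon>0$, a partial computable function that makes no errors and for infinitely many $n$ is defined on more than a $(1-\varepsilon)$-fraction of strings of length at most~$n$; a deterministic algorithm is in particular a (trivial) randomized one, so $\dom(U)$ is $(1-\varepsilon)$-i.o.-generically probabilistically computable for every $\varepsilon$ when $U$ is left-total. For the \emph{negative} direction we take $U$ effectively optimal and reuse the sparse simple set argument: by the negative part of Theorem~\ref{th:main}(d), any errorless partial decider for $\dom(U)$ is defined only on a set of density~$0$, so its lower --- indeed upper --- density of definition cannot exceed some constant strictly less than~$1$; combined with the argument from part (a) showing that even allowing randomness and only requiring success infinitely often with positive probability does not help (the density-$0$ upper bound on the defined set survives the reduction-to-$1/2$ step), we conclude $\dom(U)$ is not $(1-\varepsilon)$-i.o.-generically probabilistically computable for small $\varepsilon$ when $U$ is effectively optimal. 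The hard part, and the step I would be most careful with, is making precise in part (a) and in the negative direction of (b) that ``infinitely often with positive probability'' still collapses to a bound on the \emph{defined} density that is uniform in the algorithm; the clean way to do this is to run the Lemma~\ref{lem:most-general}-style averaging only to extract one good oracle per relevant length $n$, then apply the purely combinatorial sparse-simple obstruction to that single realized partial function, so that no genuinely probabilistic counting is needed and the constant $\varepsilon$ can be pinned down from the lower density of the image of the reduction alone.
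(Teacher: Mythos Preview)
Your approach to part~(a) has two genuine gaps. First, the statement assumes only that $U$ is \emph{optimal}, not effectively optimal, so you have no length-bounded injective reduction of a sparse simple set into $\dom(U)$ to work with; the paper's proof of~(a) instead uses only the complexity bound $\KS(H_n\cnd n)\ge n-O(1)$, which holds for every optimal machine. Second, and more fundamentally, simplicity is a \emph{computability} obstruction, not a combinatorial one: once you fix a good oracle $X\in\mathcal{G}$, the partial function $\Gamma^X$ composed with the reduction is only $X$-computable, and its set of negative answers is only $X$-c.e.; the complement of a simple set can perfectly well contain infinite $X$-c.e.\ subsets for noncomputable $X$, so your ``purely combinatorial sparse-simple obstruction to that single realized partial function'' does not exist. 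The paper's proof of~(a) avoids this entirely: it runs a compactness argument to extract finitely many oracle-prefix/length/answer-table triples $(n_i,a_i,x_i)$ whose intervals cover most of Cantor space, groups them by length, and uses a majority vote within each group to produce a guess for $H_n$; a measure argument shows that for at least one length the guess is $O(2^{n-d})$-close to $H_n$, and since the whole procedure is effective given $d$, one gets $\KS(H_n\cnd n)\le n-d+O(\log d)$, contradicting the lower bound for large~$d$.

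The same gap undermines your negative direction in part~(b). Ordinary simplicity does not stop a randomized algorithm from enumerating, with positive probability, an infinite subset of the complement (indeed, a sparse set's complement has density~$1$, so random sampling succeeds trivially). The paper handles this by proving a genuinely new ingredient, Lemma~\ref{lem:supersimple}: there is an enumerable set $S$ of density~$0$ such that \emph{no randomized algorithm} enumerates an infinite subset of its complement with positive probability. This ``probabilistically simple'' sparse set is what gets embedded into $\dom(U)$ via effective optimality to block $(1-\varepsilon)$-i.o.-generic probabilistic computation. Your positive direction for~(b) (left-total $U$, deterministic construction from Theorem~\ref{th:main}(d)) is correct and matches the paper.
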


\noindent The upper density requirement in (a) means that $\underline{E}(\alpha,\dom(U))=0$, so this result shows that the permission to give (rare) wrong answers was crucial for Theorem~\ref{th:prob-pos}. Note that the randomized algorithm in this result is still permitted to make errors (with positive probability), but we require that with positive probability it computes a function without errors (and with dense domain).

\begin{proof}
(a)~Using again the density argument, we may assume without loss of generality that a randomized algorithm $\Gamma$ computes some partial function with the required properties with probability close to $1$, say, greater than $0.9$. Let $\mathcal{G}$ be the set of good oracles; the measure of $\mathcal{G}$ exceeds $0.9$ and for every $X\in \mathcal{G}$ the machine $\Gamma$ with oracle $X$ computes some partial function that has dense domain and makes no errors for $\dom(U)$. Note that for different oracles $X\in\mathcal{G}$ we may get different partial functions with these properties. 

We simulate the behavior of $\Gamma$ on all possible random bits looking for the oracle prefixes that guarantee the algorithm's answers for most strings of length at most $n$, for some sufficiently large $n$. More precisely, for some integer $d$ we are looking for triples $(n,a,x)$ where $n$ is an integer (length), $a$ is a partial $0$-$1$-valued function on strings of length at most $n$, and $x$ is a bit string (interpreted as an oracle's prefix),  such that
\begin{itemize}
\item $n\ge d$;
\item $a$ is defined on at least $(1-2^{-d})$-fraction of strings of length at most $n$;
\item $x$ guarantees answers given by $a$: for every oracle $X$ that extends $x$, the machine $\Gamma$ with oracle $X$ computes some extension of $a$.
\end{itemize}

\noindent The first requirement ($n\ge d$) guarantees that the $2^{-d}$-fraction mentioned in the third requirement makes sense for strings of length at most $n$.
For every $d$ and for every oracle $X$ in $\mathcal{G}$ there exists some prefix $x$ and some $n$ and $a$ with these properties (since $\Gamma^X$ has domain of upper density $1$). By compactness, for every $d$ one can effectively find a finite set of triples $(n_1,a_1,x_1),\ldots,(n_k,a_k,x_k)$ such that 
\begin{itemize}
\item $n_i\ge d$ for all $i$; 
\item the intervals $\Omega_{x_i}$ (containing extensions of $x_i$) cover more than $90\%$ of the Cantor space;
\item oracle prefix $x_i$ guarantees output $a_i$;
\item $a_i$ is defined for a $(1-2^{-d})$-fraction of all strings of length at most~$n_i$. 
\end{itemize}
One may assume that all intervals $\Omega_{x_i}$ are disjoint: we may split large intervals into small parts and delete the repetitions.

Note that there is no guarantee that the answers provided by $a_i$ are correct for $\dom(U)$: we know that $\Gamma$ produces correct approximations for oracles in $\mathcal{G}$, but some of the intervals $\Omega_{x_i}$ could be outside $\mathcal{G}$. Note, however, that \emph{if} $\Omega_{x_i}$ \emph{has non-empty intersection with $\mathcal{G}$, then the answers in $a_i$ are correct for $\dom(U)$}.

Now we classify all the triples $(n_i,a_i,x_i)$ according to $n_i$: the triples that have the same $n_i$ are put into one group. Consider some group that corresponds to some length $n$. For different triples in this group the answers $a_i$ may be inconsistent. Still, we try to guess the number of strings of length at most $n$ in $\dom(U)$, using some kind of majority voting. We count the number of positive answers in $a_i$ for each triple in the group. If there exists some $u$ such that most triples in the group have the number of positive answers in $2^{n-d}$-neighborhood of $u$, we choose some $u$ with this property and declare it to be our guess for length $n$. In the last sentence the term ``most triples'' is understood in the measure sense: the corresponding intervals $\Omega_{x_i}$ should cover more than $50\%$ of the total measure of all intervals in the group.

\begin{lem}
For at least one group \textup(corresponding to some length $n$\textup) the guess is made and is $O(2^{n-d})$-close to the number of strings of length at most $n$ in $\dom(U)$.
\end{lem}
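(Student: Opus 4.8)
The plan is to show that the ``good'' groups (those whose intervals have nonempty intersection with $\mathcal G$) give correct votes, and that for at least one such group the voting actually produces a guess. First I would observe that since $\mathcal G$ has measure $>0.9$ and the intervals $\Omega_{x_i}$ (across all groups) cover $>0.9$ of the Cantor space and are pairwise disjoint, a substantial fraction of the total measure lies in intervals that hit $\mathcal G$; by a union/averaging argument there must be at least one length $n$ whose group has the property that the intervals $\Omega_{x_i}$ intersecting $\mathcal G$ carry more than half of that group's total measure. This is the key point: in such a group, every triple whose interval meets $\mathcal G$ has $a_i$ correct for $\dom(U)$ (as noted in the text), hence its number of positive answers is within $2^{n-d}$ of $H_n$ (since $a_i$ is defined on a $(1-2^{-d})$-fraction of the at most $2^{n+1}$ strings of length $\le n$, it can miss at most $2^{n+1-d}=O(2^{n-d})$ of the correct bits). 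Therefore all the ``$\mathcal G$-touching'' triples in this group have their positive-answer counts clustered in an $O(2^{n-d})$-window around $H_n$, and since they are a measure-majority of the group, the majority-voting rule fires and the declared guess $u$ must itself be $O(2^{n-d})$-close to $H_n$.

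The remaining bookkeeping is to make the ``at least one good group'' argument quantitative. I would argue as follows: let $M$ be the total measure covered by all intervals $\bigcup_i \Omega_{x_i}$, so $M>0.9$, and let $M'$ be the measure of intervals disjoint from $\mathcal G$; since those intervals lie in the complement of $\mathcal G$, $M'<0.1$. Partition the index set into groups by the value of $n_i$. If in \emph{every} group the $\mathcal G$-disjoint intervals carried at least half the group's measure, then $M'\ge M/2>0.45$, contradicting $M'<0.1$. Hence some group has $\mathcal G$-touching intervals carrying strictly more than half of its measure, which is exactly what the voting rule needs. (One should be slightly careful that ``more than $50\%$'' in the voting definition matches ``strictly more than half the group's total measure'' — the constants $0.9$ and $0.1$ leave ample room, so replacing $0.9$ by any constant close enough to $1$ works.)

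The step I expect to be the main obstacle is not the measure-counting above — which is routine once set up — but rather pinning down the precise quantitative relationship in the clustering argument, namely verifying that ``defined on a $(1-2^{-d})$-fraction of strings of length $\le n$ and correct there'' genuinely forces the positive-answer count into an $O(2^{n-d})$-window around $H_n$, with the implied constant absolute (independent of $\Gamma$, $n$, $d$). This is where the $n\ge d$ requirement is used: it guarantees $2^{-d}\le 2^{-d}$ makes sense as a fraction of the $\sim 2^{n+1}$ strings, so the number of undefined-or-wrong positions is at most $2\cdot 2^{n-d}$, giving the window width $2^{n+1-d}=O(2^{n-d})$. Once this is in hand, the two ingredients combine immediately: pick the good group, note its voting-majority triples all have counts within $O(2^{n-d})$ of $H_n$, conclude the elected $u$ is within $O(2^{n-d})+2^{n-d}=O(2^{n-d})$ of $H_n$, and the lemma is proved.
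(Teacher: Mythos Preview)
Your proposal is correct and follows essentially the same approach as the paper: a measure-averaging argument to locate a group whose majority (by measure) of intervals intersect $\mathcal{G}$, combined with the observation that for such intervals the positive-answer count lies within $O(2^{n-d})$ of $H_n$, so the majority vote fires and yields a guess $O(2^{n-d})$-close to $H_n$. The paper's version is terser and obtains the slightly stronger constant $8/9$ (rather than your $>1/2$) by noting that at most $0.1$ of the $\ge 0.9$ covered measure can lie in intervals disjoint from $\mathcal{G}$, but either constant suffices; your writeup is in fact more explicit about why the \emph{chosen} $u$ must be close to $H_n$ (via the two-majorities-intersect argument), which the paper leaves implicit.
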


Note that for other groups no guess (or a wrong guess) could be made.

\begin{proof}
The set $\mathcal{G}$ has measure greater than $0.9$, so its complement has measure less than $0.1$. On the other hand, the union of all $\Omega_{x_i}$ has measure at least $0.9$, so at least $8/9$ of it consists of oracles in $\mathcal{G}$. So for some length $n$ at least $8/9$ of the intervals of the corresponding group have non-empty intersection with $\mathcal{G}$. For these intervals the correct answer belongs to the $2^{n-d}$ neighborhoods of the numbers computed from $a_i$, so the guess is made, and it is $O(2^{n-d})$-close to the correct answer. 
\end{proof}

Now note that we have described the process that is effective when $d$ is given. Therefore, for each length $n$ the guess (if it is made for this length) has conditional complexity (given $n$) at most $O(\log d)$. If it is $O(2^{n-d})$ close to the correct number ($H_n$), then
$$
\KS(H_n\cnd n)\le n-d+O(\log d)
$$
On the other hand, $\KS(H_n\cnd n)\ge n-O(1)$, so for large $n$ we get a contradiction.

(b)~The positive part of (b) is true even without randomization, as we have seen in Theorem~\ref{th:main-prob}~(b). The negative part can be proven in the same way as Theorem~\ref{th:main}~(d), but we need a more sophisticated tool instead of a sparse simple set. Here it is.

\begin{lem}\label{lem:supersimple}
There exists an enumerable set $S$ of density $0$ with the following property: for every randomized algorithm $\Gamma$ the probability of the event ``$\Gamma$ enumerates an infinite subset of the complement of $S$'' is zero.
\end{lem}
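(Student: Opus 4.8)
The plan is to build $S$ by a direct stagewise construction, meeting for every index $e$ the requirement
$$
R_e:\qquad \mu\bigl(\{X:\Gamma_e^X\text{ enumerates an infinite subset of }\overline S\}\bigr)=0 ,
$$
where $\Gamma_0,\Gamma_1,\dots$ is a standard effective list of all oracle machines, while keeping $S$ sparse (this is a ``randomized'' strengthening of the sparse simple set of Lemma~\ref{lem:sparse-simple}). First I would reduce $R_e$ to a sequence of weaker demands: the bad set for $R_e$ is the decreasing intersection $\bigcap_m A_e^m$ with $A_e^m=\{X:\Gamma_e^X$ emits at least $m$ outputs, none of which ever enters $S\}$, so it is enough to push $\mu(A_e^m)$ to $0$ as $m\to\infty$, i.e.\ to secure, for each $k$, that $\mu(A_e^m)<2^{-k}$ for some $m$. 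The key leverage is that a bad oracle gives us infinitely many ``catching opportunities'': if $\Gamma_e^X$ enumerates an infinite set it enumerates arbitrarily large elements, so at suitable stages $\Gamma_e$ has committed, after reading only a prefix $\sigma$ of $X$, to an output $y_\sigma$ above any prescribed threshold, and putting $y_\sigma$ into $S$ ``catches'' the whole cylinder $[\sigma]$ — every oracle extending $\sigma$.

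The strategy for $R_e$ is then to watch $\Gamma_e$ over the tree of all oracles and repeatedly catch an as-yet-uncaught cylinder whose committed output is above the current (growing) threshold and not already in $S$, each time picking a near-largest such cylinder so that the caught measure grows as fast as the bookkeeping permits; working only with ``first output above the threshold'', whose uses form a prefix-free family, keeps ``largest cylinder'' meaningful. Ignoring density, one shows the uncaught measure tends to $0$: a bad oracle keeps reappearing, at larger and larger thresholds, inside some still-uncaught cylinder, and a compactness plus measure argument shows it cannot escape forever. Interleaving the $R_e$'s, and letting each $R_e$ insert into $S$ only elements above a rapidly growing requirement-dependent level, are routine.

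The hard part is reconciling this with $S$ having density $0$, i.e.\ $|S\cap[0,N]|=o(N)$. An adversarial $\Gamma_e$ can read very long prefixes before emitting each output, so that at every scale $[2^\ell,2^{\ell+1})$ only an exponentially small (in $\ell$) measure of oracles is caught per admissible insertion; worse, it can concentrate the outputs of its genuinely bad oracles on a sparse sequence of scales and use throwaway finite-range oracles to inflate any naive global ``insertion budget''. So the construction must allocate the budget scale by scale and requirement by requirement so that simultaneously (i) the number of elements put into $S$ below $N$ is $o(N)$ — e.g.\ by activating $R_e$ only above scale about $2^{2^e}$ and capping its contribution in scale $\ell$ by an amount that is $o(2^\ell)$ — and (ii) the greedy covering still provably exhausts almost every bad oracle for \emph{every} $\Gamma_e$, the point being that each bad oracle is exposed at infinitely many scales and the divergence coming from those infinitely many opportunities must be made to beat the shrinking per-scale budget, robustly against the dilution trick. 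Getting this balance right — choosing scales, thresholds, and per-scale/per-requirement budgets so that the covering argument goes through against all $\Gamma_e$ while $S$ stays density $0$ — is the crux; once $S$ is constructed, density $0$ and the measure-zero property hold by construction.
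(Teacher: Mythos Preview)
Your proposal has a genuine gap: you correctly identify that the crux is the per-length budget balancing, and then explicitly do not carry it out. Catching cylinders one at a time, as you propose, runs straight into the adversary that spreads its first-above-threshold outputs so thinly across lengths that each insertion catches only an exponentially small measure; you note this yourself and leave the resolution open. As written, the argument is a plan, not a proof.

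The paper sidesteps this difficulty with two ideas you are missing. First, a Lebesgue density reduction: it is enough to build $S$ so that no $\Gamma_e$ succeeds with probability exceeding $1/2$; if some $\Gamma$ succeeded with positive probability, one could hard-wire an oracle prefix and get success probability above $1/2$. Second, and more importantly, instead of catching cylinders one by one, the paper looks at the lower-semicomputable distribution of \emph{first outputs} of $\Gamma_e$; as soon as this distribution has total mass above $1/2$ (at a finite stage of simulation), it adds to $S$, for each length $n\ge N_e$, the $\varepsilon_e/2$-fraction of $n$-bit strings that are most probable as first outputs. Taking the most probable strings at each length guarantees that at least an $\varepsilon_e/2$-fraction of the probability at that length is covered, hence at least $\varepsilon_e/4$ of the total mass is carved out in one round, and only finitely many strings are added (only finitely many outputs have appeared at a finite stage). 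After increasing $N_e$ the process repeats, and since each round removes $\varepsilon_e/4$ from the success probability, there are only boundedly many rounds. Thus each $R_e$ contributes only finitely many elements to $S$, and density $0$ follows immediately from the convergent series $\sum_e\varepsilon_e$. The ``sort by probability and take the top fraction at each length'' trick is exactly what converts your intractable one-cylinder-at-a-time budget problem into a two-line averaging argument.
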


The first impression is that the statement of Lemma~\ref{lem:supersimple} is false. Indeed, if the density of $S$ is small, then choosing uniformly a random element in $1\ldots N$ for large $N$, we get an element outside $S$ with probability close to $1$. Then we repeat this procedure with much larger $N$ and much smaller probability of error, etc. If the sum or error probabilities is less than $1$, in this way we (with positive probability) enumerate an infinite set outside $S$.

What is wrong with this argument? We implicitly assumed here that the density computably converges to $0$ (when choosing $N$ large enough to make error probability small). So the set $S$ with required properties may exist (and it does!), though the convergence for density cannot be computable.

\begin{proof}[Proof of Lemma~\ref{lem:supersimple}]
Due to the Lebesgue density argument, it is enough to show (for some enumerable set $S$) that \emph{no randomized algorithm can enumerate an infinite set outside $S$ with probability more than $1/2$}.  This statement can be split into a sequence of requirements $R_e$, where $R_e$ says that $e$th randomized algorithm $\Gamma_e$ fails to do that. And, of course, we have density $0$ requirement (in addition to all $R_e$).

We take care of $R_e$ separately for each $e$, having some process that adds some strings to~$S$. Note that the processes for different $e$ may only help each other (the bigger $S$ is, the more difficult it is to enumerate an infinite set outside $S$), except for the density requirement: we need to ensure that all the strings added to $S$ by all processes (for all $e$) still form a set of density~$0$. To take care of densities, let us take a convergent series of rational numbers with $E=\sum_e \varepsilon_e<1$, and agree in advance that $e$th process may add to $S$ at most $\varepsilon_e$-fraction of strings of length $k$ for every length $k$. For example, we may let $\varepsilon_e=2^{-(e+2)}$. This guarantees that the density of $S$ does not exceed $E$, and some additional argument will show that it is actually $0$.

Now we describe the $e$th process (that takes care of $R_e$). There is a technical problem: if $\varepsilon_e$ is small, the permission to use an $\varepsilon_e$-fraction of strings for each length does not allow use to use short strings, because even one string added may exceed the threshold. Since we are interested in the enumeration of an \emph{infinite} set outside $S$, we may ignore short strings and start with some length $N_e$ where we are allowed to add at least an $\varepsilon_e/2$-fraction of the strings of that length not crossing $\varepsilon_e$-threshold. So without loss of generality we may assume that $\Gamma_e$ generates only strings of length at least $N_e$.
 
We simulate $\Gamma_e$ on all oracles to find which strings appear as \emph{first} elements of the enumeration and what are their probabilities. In this way we get a lower semicomputable probability distribution on strings of length at least $N_e$. If the total probability (the sum of probabilities for all possible output strings) never exceeds $1/2$, we do not do anything and $R_e$ is vacuously satisfied.

As soon as the sum of probabilities exceeds $1/2$, we stop the simulation and choose which strings should be added to $S$. For each $n\ge N_e$ we sort all the strings of length $n$ in order of decreasing probability and add to $S$ the first $\lceil \varepsilon_e/2\rceil 2^n$ of them (so the fraction is at least $\varepsilon_e/2$ and at most $\varepsilon_e$, due to the assumption about $N_e$). 

What do we achieve in this way? We cover by $S$ some set of strings that have total probability (in the $\Gamma_e$-enumeration) at least $\varepsilon_e/4$. Indeed, for each length we cover at least a $\varepsilon_e/4$-fraction of total probability for this length (by taking $\varepsilon_e/4$-fraction of most probable strings), and the sum of these total probabilities for all lengths is at least $1/2$. Then we increase $N_e$, making it greater than all strings we have added to $S$, carve out from the probability space the part corresponding to covered elements, and repeat the process. Since our goal is to prevent enumerating an infinite set outside $S$, the increase in $N_e$ does not matter. The second stage (if the probability reaches $1/2$ again) carves out additional $\varepsilon_e/4$ from the probability of enumerating an infinite set outside $S$. (Note that we do not simulate $\Gamma_e$ further for the parts of the probability space already carved out.) And so on --- the number of iterations is bounded, since each time we decrease the probability by at least $\varepsilon_e/4$, and at some stage the probability never exceeds $1/2$, and $R_e$ is satisfied.

It remains to explain why the resulting $S$ (the union of all elements added by all the processes) has zero density. Since we add only finitely many elements to $S$ for each of the requirements $R_e$, we know that starting from some point, the density of $S$ is bounded by the tail of the series $\sum_e\varepsilon_e$. This tail can be arbitrarily small, so the density of $S$ is zero. 
\end{proof}

The set provided by Lemma~\ref{lem:supersimple} allows us to finish the proof of Theorem~\ref{th:prob-generic}: it is embedded with positive density in the domain of every effectively optimal machine, and for this part the probabilistic algorithm either makes errors or generates only a sparse set, so the density of the domain is separated from $1$ if no errors are made.
\end{proof}

\section{Busy beavers and fraction of long computations}\label{sec:busy-beaver}

We define the ``busy beaver'' function $\BB(n)$ as the maximal running time for terminating computations of the optimal machine on strings of length at most $n$. This functions increases very fast (faster than any computable function) and provides some invariant scale for measuring the running time of computations in that it does not depend significantly on the choice of the optimal machine and computational model used to measure the running time. Namely, the following invariant definition is possible:

\begin{prop}\label{prop:b-vs-bb}
Let $B(n)$ be the maximal integer of complexity at most $n$\textup; then $$B(n-c)\le \BB(n) \le B(n+c)$$ for some $c$ and for all $n$.
\end{prop}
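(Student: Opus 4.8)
The plan is to prove the two inequalities $B(n-c) \le \BB(n)$ and $\BB(n) \le B(n+c)$ separately, in each case by exhibiting a short program (or a short description) of one quantity in terms of the other, and invoking the criterion $\KS(H_n \cnd n) \ge n - O(1)$ from Theorem~\ref{th:CNSS} to control the relevant complexities.

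For the inequality $\BB(n) \le B(n+c)$: I would argue that $\BB(n)$ is an integer of complexity at most $n + O(1)$, hence (by definition of $B$ as the largest integer of complexity $\le n+c$) it is bounded by $B(n+c)$. To see that $\KS(\BB(n)) \le n + O(1)$, note that $\BB(n)$ can be reconstructed from $n$ together with the knowledge of which strings of length at most $n$ halt and how long they run: more economically, from $n$ and the number $H_n$ of halting programs of length at most $n$ one can run $U$ on all such programs in parallel until exactly $H_n$ of them have halted, and the largest running time observed is $\BB(n)$. Since $\KS(H_n \cnd n) \le n + O(1)$ (as $H_n \le 2^{n+1}$), we get $\KS(\BB(n) \cnd n) \le n + O(1)$, and combining with a self-delimited description of $n$ (or better, observing $n \le \log \BB(n) + O(1)$ so $n$ is cheap given $\BB(n)$) yields $\KS(\BB(n)) \le n + O(1)$. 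Actually the cleanest route is: $\KS(\BB(n)) \le \KS(\langle n, H_n\rangle) + O(1) \le \KS(H_n) + O(\log n) + O(1)$, but $n = O(\log \BB(n))$, so one should instead directly note $\KS(\BB(n)) \le \KS(H_n \cnd n) + \KS(n) + O(1)$ and absorb $\KS(n) = O(\log n) = O(\log\log \BB(n))$ into the estimate with a slightly larger $c$ — or sidestep this by recovering $n$ from $\BB(n)$ itself since $n \le \log_2(\BB(n)) + O(1)$ for the (straightforward) model of running time. I would pick whichever bookkeeping is cleanest and push the logarithmic slack into $c$.

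For the inequality $B(n-c) \le \BB(n)$: here I want to show that $\BB(n)$ dominates every integer of complexity at most $n - c$. Suppose $m$ has $\KS(m) \le n - c$, say $m = U_0(p)$ for an optimal reference machine $U_0$ and $|p| \le n - c$. I would like to conclude $m \le \BB(n)$. The idea: from a program $p$ of length $\le n-c$ witnessing $\KS(m) \le n-c$, pad $p$ up to length $n$ in a self-delimited way to get a length-$n$ program for the optimal machine $U$ whose running time is at least (roughly) $m$ — e.g. run $U_0$ on $p$ to get $m$, then loop $m$ steps. The running time of this length-$n$ program for $U$ is then $\ge m$, so $\BB(n) \ge m$. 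Making the padding work with only an additive-constant loss $c$ is exactly the point where optimality/universality of $U$ is used: translating the "run $U_0(p)$ then idle $m$ steps" machine into $U$ costs an additive constant in program length, and the self-delimited encoding of the $(n-c)$-bit string $p$ inside an $n$-bit program costs the $c$ bits we have budgeted (using a length-prefix encoding, $|p| + O(\log(n - |p|))$ bits suffice, and $n - |p| \le$ the constant, so this is $|p| + O(1) \le n$).

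The main obstacle I anticipate is the bookkeeping around the $n$ vs. $\log\log\BB(n)$ logarithmic terms in the first inequality — making sure that the cost of specifying $n$ (needed to decode $H_n$ and to know when to stop) really is absorbable into the additive constant $c$ rather than contributing a $\log n$ term. The resolution is that $n$ is recoverable from $\BB(n)$ up to an additive constant (running time at least linear in input length, or at least one can assume the model is set up so that $\BB(n) \ge 2^n$ trivially, e.g. by a dummy program that counts to $2^n$), so $n$ need not be specified separately; alternatively one gives $n$ as a condition and notes that $\BB(n) \le B(n+c)$ is really the clean statement while the unconditional version follows since $B$ is monotone and $n = O(\log B(n))$. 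I would state the proof with just enough care to make the constant absorption transparent, citing Theorem~\ref{th:CNSS} for the lower bound on $\KS(H_n \cnd n)$ and the definition of universality for the program-translation step.
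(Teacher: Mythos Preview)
The paper does not prove this proposition itself; it refers to \cite{VUS2013}. Evaluating your proposal on its own merits:

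For $\BB(n)\le B(n+c)$ your approach works, though it is more complicated than necessary and the $O(\log n)$ bookkeeping you worry about is a red herring. The cleanest argument bypasses $H_n$ entirely: $\BB(n)$ is the running time of $U$ on one particular input $p$ of length at most $n$, and ``running time of $U$ on $q$'' is a partial computable function of $q$, so $\KS(\BB(n))\le |p|+O(1)\le n+O(1)$. If you do want to go via $H_n$, note that the pair $(n,H_n)$ is encoded by the $(n{+}1)$-bit string for $H_n$ with leading zeros (the length recovers $n$), so $\KS(n,H_n)\le n+O(1)$ directly; no separate encoding of $n$ is needed.

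For $B(n-c)\le \BB(n)$ there is a genuine gap. You build a machine $V$ that computes $m=U_0(p)$ and then idles $m$ steps, and translate the $V$-program into a $U$-program via optimality. But optimality (even effective optimality) of $U$ gives a length-bounded total $h$ with $U(h(p))=V(p)$: it preserves the \emph{output}, not the running time. Nothing prevents $U$ from computing $U(h(p))$ in far fewer than $m$ steps, so you cannot conclude $\BB(|h(p)|)\ge m$. The argument would go through only for a specific step-by-step-simulating universal machine, or under a model assumption like ``running time $\ge$ output length'' combined with making $V$ output $0^m$; neither is available for an arbitrary optimal $U$.

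The fix is exactly the tool you announced in your opening sentence but never actually deployed: the lower bound $\KS(H_n\cnd n)\ge n-O(1)$ from Theorem~\ref{th:CNSS}. If $m\ge \BB(n)$, then from $m$ and $n$ one computes $H_n$ by running every input of length at most $n$ for $m$ steps and counting the halts; hence $\KS(m)+O(1)\ge \KS(m\cnd n)\ge \KS(H_n\cnd n)-O(1)\ge n-O(1)$. Contrapositively, any integer of complexity at most $n-c$ is below $\BB(n)$, giving $B(n-c)\le \BB(n)$. This is also what makes the result model-independent, as the paper remarks just before the proposition.
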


(See~\cite{VUS2013} for the proof and more details.) 
 
 So we know that if we run the optimal machine on all the inputs of size at most $n$, we need to make $\BB(n)$ steps (for each input) until all the terminating computations terminate. A natural question arises: how long should we wait until \emph{most of the computations} (say, except for $2^{n-k}$, i.e., $2^{-k}$-fraction) terminate?  The following theorem gives an answer with logarithmic precision (note that both $\KP(k\cnd n)$ and $\KP(n)$ are logarithmic in $n$):
 
 \begin{thm}\label{th:busy-beavers}\hfill
\begin{enumerate}[label=\({\alph*}]

\item If after $t$ steps at most $2^{n-k}$ terminating computations are still running, then $$t\ge \BB(k-\KP(k\cnd n)-O(1)).$$

\item If after $t$ steps more than $2^{n-k}$ terminating computations are still running, then $$t\le \BB(k+\KP(n)+O(1)).$$
\end{enumerate}
 \end{thm}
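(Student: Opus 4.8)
The plan is to express both bounds in terms of a single quantity. For $m\le n$ let $g(m)$ be the number of terminating computations of $U$ on inputs of length at most $n$ whose running time is at least $\BB(m)$; equivalently $g(m)=H_n-D$, where $D$ is the number (computable from $n$ and $\BB(m)$) of length-$\le n$ inputs on which $U$ halts within $\BB(m)-1$ steps. Two elementary facts drive everything. First, ``after $t$ steps at most $2^{n-k}$ terminating computations are still running'' is equivalent to ``the $(H_n-2^{n-k})$-th smallest running time is $\le t$''; in particular, if $t<\BB(m)$ then every computation counted by $g(m)$ is still running at time $t$, so the hypothesis of (a) fails whenever $g(m)>2^{n-k}$. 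Second, $\BB(m)$ is computable from $m$ together with $H_m$ (dovetail all length-$\le m$ programs and stop once $H_m$ of them have halted; the last halting time is $\BB(m)$), and since $H_m\le 2^{m+1}$ it can be written, given $m$, in $m+O(1)$ bits with no self-delimiting overhead, so $\KP(\BB(m)\cnd m)\le m+O(1)$. Throughout I use Theorem~\ref{th:CNSS} in the form $n-O(1)\le\KS(H_n\cnd n)\le\KP(H_n\cnd n)\le n+O(1)$ and Proposition~\ref{prop:b-vs-bb} in the form ``$\KS(\tau)\le\ell\Rightarrow\tau\le B(\ell)\le\BB(\ell+O(1))$''.

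For (a) I prove the contrapositive: it suffices to show $g(m)>2^{n-k}$ for $m=k-\KP(k\cnd n)-O(1)$. Suppose not, i.e.\ $g(m)\le 2^{n-k}$. Then, knowing $n$, the value $\BB(m)$ (equivalently $m$ and $H_m$), the number $k$, and $\le n-k$ extra bits that locate $D=H_n-g(m)$ inside an interval of length $2^{n-k}$, one recovers $H_n$. Assembling this as a prefix-free description (a self-delimiting code for $k$ relative to $n$, a short code for the difference $k-m$ relative to $(n,k)$, then $m+1$ bits for $H_m$, then $n-k$ bits) gives $\KP(H_n\cnd n)\le m+(n-k)+\KP(k\cnd n)+O(\log\log n)$. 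Comparing with $\KP(H_n\cnd n)\ge n-O(1)$ forces $m\ge k-\KP(k\cnd n)-O(\log\log n)$, contradicting the choice of $m$; hence $g(m)>2^{n-k}$, and therefore $t\ge\BB(m)$.

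For (b) I produce a cheap upper bound for $t$. From the hypothesis, $D_t:=\#\{p:|p|\le n,\ U(p)\text{ halts within }t\text{ steps}\}<H_n-2^{n-k}$. Let $b$ be the largest multiple of $2^{n-k}$ not exceeding $H_n$; then $b>H_n-2^{n-k}>D_t$ and $b\le H_n$, so the first step $\tau_*$ at which the running halt-count (over inputs of length $\le n$) reaches $b$ satisfies $t<\tau_*<\infty$. Crucially $\tau_*$ is computed from $n$, $k$, and the integer $b/2^{n-k}=\lfloor H_n/2^{n-k}\rfloor\in[1,2^{k+1}]$, which costs only $k+O(1)$ bits given $n$ and $k$; hence $\KS(\tau_*)\le k+\KP(n)+O(\log n)$, where the $\KP(n)$ pays for making the description unconditional. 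By Proposition~\ref{prop:b-vs-bb} this yields $\tau_*\le\BB(k+\KP(n)+O(1))$ (with the remaining $O(\log n)$ absorbed by a tighter encoding of $k$ and of $b/2^{n-k}$), and therefore $t<\tau_*\le\BB(k+\KP(n)+O(1))$. Note that $b$ (and so $\tau_*$) is defined purely from $H_n$ and $k$, not from $t$, so the bound on $t$ is not circular.

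The step I expect to be the real work is the precision bookkeeping: both reductions above are one-line ideas, but turning ``cost $\approx k$ bits'' into the exact terms $\KP(k\cnd n)$ in (a) and $\KP(n)$ in (b), rather than a crude $O(\log n)$, requires choosing the prefix-free encodings so that the known ranges of $H_m$ and of $b/2^{n-k}$ are exploited and nothing is re-described. A secondary point of care is the off-by-one handling at the boundary --- distinguishing ``running time $\ge\BB(m)$'' from ``still running after $\BB(m)-1$ steps'' --- and verifying that the hypotheses themselves (e.g.\ $|S_t|\le H_n$) guarantee $H_n>2^{n-k}$, so that $\lfloor H_n/2^{n-k}\rfloor\ge 1$ and the chosen multiple $b$ really lies strictly above $D_t$.
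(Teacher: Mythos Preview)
Your argument for (b) is essentially the paper's: the ``largest multiple of $2^{n-k}$ not exceeding $H_n$'' is exactly $H_n$ with its last $n-k$ bits zeroed, and waiting for that many halts gives a time $\tau_*>t$ of complexity $\le k+\KP(n)+O(1)$. The $O(\log n)$ you worry about really does vanish: write a prefix-free code for $n$ and then append the $(k{+}1)$-bit string $\lfloor H_n/2^{n-k}\rfloor$; the decoder recovers $n$ from the prefix and then reads off $k$ as the remaining length, so $k$ need not be encoded separately.

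For (a), however, the detour through a pre-chosen $m$ creates a precision loss that is \emph{not} just bookkeeping. Your encoding of $H_n$ (given $n$) spends $\KP(k\cnd n)$ on $k$, then a self-delimiting code for $k{-}m$, then $m{+}O(1)$ bits for $H_m$, then $n{-}k$ bits for $g(m)$; comparing with $\KP(H_n\cnd n)\ge n-O(1)$ yields only
\[
m \ \ge\ k-\KP(k\cnd n)-\KP(k{-}m\cnd n,k)-O(1).
\]
To get a contradiction at $m=k-\KP(k\cnd n)-O(1)$ you would need $\KP(k{-}m\cnd n,k)=O(1)$, i.e.\ $\KP\bigl(\KP(k\cnd n)\cnd n,k\bigr)=O(1)$, and this is false in general (it can be of order $\log \KP(k\cnd n)$). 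So your route proves only $t\ge \BB\bigl(k-\KP(k\cnd n)-O(\log \KP(k\cnd n))\bigr)$, not the claimed $O(1)$. The underlying reason is that specifying $\BB(m)$ via $(m,H_m)$ forces you to pay for $m$, and the particular $m$ you want is defined through the non-computable quantity $\KP(k\cnd n)$.

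The paper avoids this entirely by not fixing any $m$: it shows directly that \emph{every} $t$ satisfying the hypothesis has $\KS(t)\ge k-\KP(k\cnd n)-O(1)$, using $t$ itself (not $\BB(m)$) in the description of $H_n$. Since the hypothesis is upward closed in $t$, every integer $\ge t$ then has complexity at least $k-\KP(k\cnd n)-O(1)$, forcing $t>B\bigl(k-\KP(k\cnd n)-O(1)\bigr)\ge \BB\bigl(k-\KP(k\cnd n)-O(1)\bigr)$ via Proposition~\ref{prop:b-vs-bb}. Rewriting your argument this way --- replace ``$\BB(m)$'' by the given $t$ in the encoding and drop the attempt to name $m$ --- gives the $O(1)$ bound with no extra work.
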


\begin{proof}
(a)~Since $\BB(i)$ (up to $O(1)$-change in the argument, see Proposition~\ref{prop:b-vs-bb}) can be defined as the maximal number of complexity at most $i$, we need to show only that for every such $t$ we have $\KS(t)\ge k-\KP(k\cnd n)-O(1)$.

Indeed, to reconstruct $H_n$ given $n$, it is enough to know $t$ and the number~$N$ of terminating computations (on inputs of length at most $n$) that are still running after $t$ steps. To encode this information, we start by a self-delimited description for $k$ given $n$ (using $\KP(k\cnd n)$ bits), then append~$N$ written in binary (using an $(n-k)$-bit string), and finally append a $\KS(t)$-bit description of $t$. Knowing $n$ and this encoding, we first find $k$, then read the next $n-k$ bits (that form some number $U$), use the rest to reconstruct $t$, and then make $t$ steps for each input of length at most $n$, count the terminated computations and add $U$ to get $H_n$. Therefore, $\KP(k\cnd n)+n-k+\KS(t)\ge \KS(H_n\cnd n)\ge n-O(1)$, and we get the desired inequality.

(b)~To prove this bound, let us construct a number of complexity at most $k+\KP(n)$ that exceeds $t$. Consider the number $H_n$ of terminating programs of length at most~$n$, and consider its first $k$ bits (i.e., $H_n$ where the last $n-k$ bits are replaced by zeros). Knowing this string and $n$, we can wait until that many terminating computations appear, and get the number $t$ of steps needed for that. In total it is enough to know $\KP(n)+k$ bits, as we promised.
\end{proof}

One would like to get rid of the logarithmic terms in these statements. Sometimes it is indeed possible. For example, in (b) we can replace $\KP(n)$ by $O(\KS(n\cnd k))$. Indeed, a prefix-free description of a program that maps $k$ to $n$ uses $O(\KS(n\cnd k))$ bits, and if we append the first~$k$ bits of $H_n$ to it, we can then reconstruct the prefix-free description, then~$k$, then $n$, and finally $t$.\footnote{Technical remark: it is important that the description is self-delimiting when $k$ is not known, so this argument does not allow us to write $\KP(n\cnd k)$ instead of $O(\KS(n\cnd  k))$.} This implies, for example, the following corollary:

\begin{cor}
Making $\BB(n/2)$ steps of computation of the universal machine for each input of length at most $n$, we have $2^{n/2\pm O(1)}$ unfinished \textup(terminating\textup) computations.
\end{cor}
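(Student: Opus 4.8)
The plan is to derive the corollary directly from Theorem~\ref{th:busy-beavers} by specializing to $k = n/2$, keeping track of which error terms can be absorbed into $O(1)$. Setting $k = n/2$ (for even $n$; the odd case differs by a harmless constant) and $t = \BB(n/2)$, I would first apply part (b): if after $\BB(n/2)$ steps \emph{more} than $2^{n/2}$ terminating computations were still running, then $\BB(n/2) \le \BB(n/2 + \KP(n) + O(1))$, which is not yet a contradiction because $\KP(n)$ grows. This is exactly why the sharpened remark following the theorem is needed: using the observation that $\KP(n)$ can be replaced by $O(\KS(n \cnd k))$ in part (b), and noting that $\KS(n \cnd n/2) = O(1)$ (since $n$ is computable from $n/2$ by doubling), we get $\BB(n/2) \le \BB(n/2 + O(1))$. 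Since $\BB$ is nondecreasing, this gives no contradiction either — but it does mean the number of unfinished computations is at most $2^{n/2}$, once we run slightly longer, i.e., after $\BB(n/2 + O(1))$ steps; and since $\BB(n/2 + O(1))$ and $\BB(n/2)$ differ (we want to argue) only by a bounded factor in the count, we obtain the upper bound $2^{n/2 + O(1)}$ on the number of unfinished terminating computations after $\BB(n/2)$ steps.

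For the matching lower bound I would use part (a) of Theorem~\ref{th:busy-beavers} in contrapositive form. Suppose that after $\BB(n/2)$ steps at most $2^{n/2 - k'}$ terminating computations are still running, for some $k'$ to be determined. Part (a) with the role of $k$ played by $n/2 + k'$ (so that $2^{n - (n/2 + k')} = 2^{n/2 - k'}$) gives $\BB(n/2) \ge \BB\bigl( (n/2 + k') - \KP(n/2 + k' \cnd n) - O(1)\bigr)$. Since $n/2 + k' \le n$ for the relevant range and $\KP(m \cnd n) = O(\log n)$, the argument of $\BB$ on the right is at least $n/2 + k' - O(\log n)$. For this to be consistent with $\BB$ being nondecreasing we need $k' = O(\log n)$ — but that is not quite $O(1)$. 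To get the clean $2^{n/2 \pm O(1)}$ I would again invoke the logarithmic-term improvement: by the same doubling trick $\KS(n \cnd n/2 + k')$ is small when $k'$ is small, so the refined version of (a) (analogous to the refined (b)) yields argument $n/2 + k' - O(1)$, forcing $k' = O(1)$, hence at least $2^{n/2 - O(1)}$ unfinished computations.

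I expect the main obstacle to be making the refined versions of both parts of Theorem~\ref{th:busy-beavers} precise enough to replace every $\KP(n)$ or $\KP(k\cnd n)$ by something that is $O(1)$ when $k$ is within a constant of $n/2$. The excerpt only states the $\KP(n) \to O(\KS(n\cnd k))$ improvement for part (b) explicitly; for part (a) one needs the analogous statement that $\KP(k \cnd n)$ can be replaced by $O(\KS(n \cnd k))$, which follows by the same reasoning — a self-delimiting program mapping $k$ to $n$ together with the relevant $n-k$ bits of $H_n$ and the $\KS(t)$-bit description of $t$ lets one reconstruct everything. Once both refined bounds are in hand, the corollary is immediate: with $k = n/2$ we have $n = 2k$, so $\KS(n \cnd k) = O(1)$, both the upper and lower estimates collapse to $\BB(n/2 \pm O(1))$, and the number of unfinished terminating computations is pinned between $2^{n/2 - O(1)}$ and $2^{n/2 + O(1)}$, i.e., it is $2^{n/2 \pm O(1)}$.
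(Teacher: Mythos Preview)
Your overall plan --- specialize Theorem~\ref{th:busy-beavers} to $k\approx n/2$ --- is right, but both halves have a gap.

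For the lower bound you estimate $\KP(n/2+k'\cnd n)$ by the generic $O(\log n)$, conclude only $k'=O(\log n)$, and then reach for a ``refined (a)''. This detour is unnecessary: since $n/2$ is computable from $n$, we have $\KP(n/2+k'\cnd n)\le\KP(k')+O(1)=O(\log k')$. Plugging this into (a) gives $\BB(n/2)\ge\BB(n/2+k'-O(\log k'))$, and the relation $B(m-c)\le\BB(m)\le B(m+c)$ with $B$ strictly increasing forces $k'-O(\log k')\le O(1)$, hence $k'=O(1)$. So the original (a) already suffices, exactly as the paper uses it. (Your proposed refinement of (a), replacing $\KP(k\cnd n)$ by $O(\KS(n\cnd k))$, is also not justified as written: a self-delimiting program mapping $k$ to $n$ does not by itself reveal $k$, so you cannot parse the remaining $n-k$ bits.)

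For the upper bound, applying improved (b) at $k=n/2$ only yields $\BB(n/2)\le\BB(n/2+O(1))$, which is vacuous, and your attempt to bridge back by saying that $\BB(n/2)$ and $\BB(n/2+O(1))$ ``differ only by a bounded factor in the count'' is unsupported: there is no a~priori bound on how many computations of length $\le n$ terminate between those two times. The clean fix is to vary $k$: take $k=n/2-c$. Then $\KS(n\cnd n/2-c)\le\KS(c)+O(1)$, so improved (b) gives $N(\BB(n/2))>2^{n/2+c}\Rightarrow\BB(n/2)\le\BB(n/2-c+O(\log c))$. For a large enough \emph{constant} $c$ this contradicts the essential monotonicity of $\BB$ (via $B$), yielding $N(\BB(n/2))\le 2^{n/2+c}$. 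This is precisely how the paper's one-line ``this implies'' is meant to be read.
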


Still it would be nice to find a more general result with $O(1)$-precision (or to show that logarithmic terms are unavoidable in the general case), which we leave as an open question. 

\iffalse
STRANGE PASSAGE: 

In this argument one can use $\KP(n-k)$ instead of $n$: finding the prefix that encodes $n-k$, we know the length of the rest, i.e., $k$, so we can find $n$. This observation gives the following corollary:
\begin{quote}
\emph{Simulating all programs of length at most $n$ in parallel, after $\BB(n-i)$ steps we have $2^{i\pm\KP(i\cnd n)}$ terminating programs that are still running.}
\end{quote}
Note that $\KP(k\cnd n)=\KP(n-k\cnd n)+O(1)$, so we can estimate the error terms in both direction by $\KP(i\cnd n)$. This bound is $O(1)$-tight if $i$ is some computable function of $n$. For example, after $\BB(n/2)$ steps we have $2^{n/2\pm O(1)}$ yet non-terminated programs. 

Still for $i=n/2$ the argument works, it seems.}

\q{\emph{Question}: Is it possible to improve the precision (may be even obtaining $O(1)$-precision) or prove that it is not possible? What can be done for prefix decompressors?}
\fi  

\section{Some generalizations}\label{sec:appendix}

\subsection{Beyond the complexity arguments: coarse separation}

Most of the arguments above are based on the lower bound for the Kolmogorov complexity of $\KS(H_n\cnd n)$. Still there are natural questions of similar type where the complexity argument does not help us much (at least not in an obvious fashion). 

Recall the concept of enumerable inseparable sets, which we already used in Section~\ref{sec:opt-and-eff-opt}: two enumerable sets $A$ and $B$ are inseparable if there is no \emph{separator}, i.e., no \emph{total} computable function $h$ (defined on strings and taking $0/1$-values) such that $h(A)=0$ and $h(B)=1$. In other words, every computable function makes errors --- there are some error points $x$ such that either $h(x)$ is undefined, or $x\in A$ and $h(x)=1$, or $x\in B$ and $h(x)=0$. 

Here we are interested in the density of the error points. Is it possible that for some enumerable inseparable sets $A$ and $B$  there exists a computable (not necessarily total) function $h$ for which the error points have density $0$? As before, this does happen for certain $A$ and $B$. Assume, for example, that $A$ and $B$ consist only of strings of the form $0^m$: we take two inseparable sets of integers $U$ and $V$ and let $A=\{0^m: m\in U\}$ and $B=\{0^m: m\in V\}$. Obviously, for every total function only strings of type $0^m$ could be error points, and these strings form a set of density $0$. To make the question non-trivial, we need to consider some standard inseparable sets, as we did before for the halting problem.

\begin{thm}
Let $U$ be an effectively optimal machine. Then the sets $A=\{x\colon U(x)=0\}$ and $B=\{x\colon U(x)=1\}$ are disjoint and inseparable. Moreover, for every computable function $h$ the fraction of errors made by $h$ \textup(as a separator\textup) on strings of length at most $n$, i.e., 
    $$E_n=\frac{\#\{x\colon |x|\le n,  (h(x) \text{ is undefined }) \lor (x\in A \land h(x)=1) \text{ or } (x\in B\land h(x)=0) \}}{2^{n+1}}$$
is separated from $0$ for large $n$, i.e., $\liminf E_n >0$.
\end{thm}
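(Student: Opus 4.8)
The plan is to reduce this to a complexity bound of the type $\KS(H_n \cnd n) \geq n - O(1)$ that we have used throughout, but now applied to a pair of inseparable sets carved out of the effectively optimal machine $U$ rather than to a single halting set. First I would observe that $A$ and $B$ are disjoint (a machine has a single value on each input) and inseparable: given a total computable separator $h$, we could use it together with the translation map (from effective optimality) to separate the standard computably inseparable sets, contradiction. This is essentially the argument already used in the excerpt (e.g.\ in the proof of Proposition~\ref{prop:left-total} and in the proof about injective machines), so it can be quoted.

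For the quantitative part, I would fix a putative computable (possibly partial) separator $h$ and suppose, for contradiction, that for infinitely many $n$ the fraction $E_n$ of error points among strings of length at most $n$ is at most $2^{-d}$, for arbitrarily large $d$. Since $U$ is effectively optimal, there is a total length-bounded computable $g$ reducing some suitable machine to $U$; the key is to encode, inside the inputs of $U$, two enumerable inseparable ``tracer'' sets $P,Q$ of \emph{lengths} so that for a length $n$ lying in $P$ the value of $U$ is forced to $0$ on a large (constant) fraction of the $n$-bit strings, and for $n \in Q$ it is forced to $1$ on a large fraction — mimicking the $U'$ construction used in part~(b) of the proposition about injective optimal machines. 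Because $g$ is length-bounded, the images of the $n$-bit block land in lengths $n+O(1)$, so a small error fraction of $h$ on those lengths would let us decide, for each such block, whether it was the ``$0$-block'' or the ``$1$-block'' by majority vote — and that decides $P$ versus $Q$, contradicting inseparability. The point of phrasing it via an error \emph{density} is that a $2^{-d}$ fraction of errors cannot flip a majority among a positive-constant fraction of strings once $d$ is large enough, so the majority vote is correct.

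Concretely, the cleanest route is probably: (i) build an auxiliary machine $W$ whose domain/behavior on $n$-bit strings depends on whether $n$ is enumerated into $P$ or into $Q$ (inseparable, both infinite), putting output $0$ on all $n$-bit strings for $n\in P$ and output $1$ on all $n$-bit strings for $n\in Q$; (ii) reduce $W$ to $U$ via a total length-bounded injection $g$ (using $m$-optimality of $\dom(U)$, Lemma~\ref{lem:schnorr-injection}, and the fact that values can be tracked since $g$ is injective and length-preserving up to $O(1)$); (iii) observe that $g$ sends the $n$-bit strings into lengths within $n\pm c$, so among strings of $U$-length at most $m := n+c$ there is a block of $\Omega(1)\cdot 2^n$ strings all mapped by $U$ to the same bit, which bit being the membership of $n$ in $P$ vs.\ $Q$; (iv) run $h$ on that block: if $h$ errs on at most a $2^{-d}$ fraction of strings of length at most $m$ (hence on at most $2^{m-d}$ strings total, an $o(1)$ fraction of the block once $d \gg c$), then the majority of $h$'s outputs on the block reveals the forced bit; (v) this gives a total computable procedure that, on input $n$, decides whether $n\in P$ or $n\in Q$ (for all $n$ in the infinite set of lengths where $E_m$ is small) — enough to contradict inseparability of $P$ and $Q$, since one can wait for $n$ to fall in $P\cup Q$ and then read off the answer.

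The main obstacle I anticipate is bookkeeping around the quantifiers: the hypothesis only gives small $E_n$ for \emph{infinitely many} $n$ (a $\liminf$ statement), and the reduction $g$ shifts lengths by an unknown constant $c$, so I must make sure the ``large fraction'' surviving at length $n+c$ is still $\Omega(1)$ and that the block stays recognizable. A second delicate point is that $h$ may be partial, so ``majority vote'' must be taken among the strings where $h$ is defined, using that the undefined points also count toward $E_n$ and are therefore rare. If the shift-by-$c$ interaction turns out awkward, the fallback is to make the tracer blocks span a range of lengths $[n, n+c]$ rather than a single length, absorbing the constant; this keeps the forced fraction positive and the argument otherwise unchanged. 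Once the majority-vote decoding is in place, deriving the contradiction with inseparability of $P,Q$ is immediate, and then $\liminf E_n > 0$ follows.
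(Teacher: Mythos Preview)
Your approach has a real gap in step~(v), and it is precisely the quantifier issue you flagged as ``the main obstacle''. Assuming $\liminf E_n=0$, you only know that the error fraction is small for \emph{infinitely many} lengths $m$, and this set of good lengths is not computable. So your majority-vote procedure on the $g$-image of the $n$-bit block is only guaranteed to return the correct bit when $m\approx n+c$ happens to be one of those good lengths. You cannot ``wait for $n$ to fall in $P\cup Q$ and read off the answer'': waiting tells you which of $P,Q$ contains $n$ directly, so that step is circular, and in any case you still do not know whether the majority vote is trustworthy at that particular length. At best your construction yields a total computable $\{0,1\}$-valued function on $\mathbb{N}$ (the majority vote); inseparability then tells you it errs on infinitely many $n\in P\cup Q$, and each such error produces $\ge 2^{n-1}$ error points of $h$ among strings of length $\le n+c$. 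But that only gives $\limsup E_n\ge 2^{-c-2}$, not $\liminf E_n>0$: the errors are concentrated at the (possibly very sparse) sequence of bad lengths, and between them $E_n$ may well drop to $0$.

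The paper avoids this entirely by diagonalizing against $h$ directly with the recursion theorem, rather than routing through a fixed pair of inseparable sets. One observes that for an effectively optimal $U$ the injective length-bounded reduction $t_V$ of any machine $V$ to $U$ can be found \emph{effectively from a program for $V$} (Remark~\ref{rem:effective} plus Lemma~\ref{lem:schnorr-injection}). So by the fixed-point theorem one may define $V(x)=1-h(t_V(x))$ (undefined where $h(t_V(x))$ is). Then for every $x$, the string $t_V(x)$ is an error point of $h$: if $h(t_V(x))$ is undefined this is an error by definition; if $h(t_V(x))=0$ then $V(x)=1$, so $U(t_V(x))=1$, so $t_V(x)\in B$ and $h$ says $0$; symmetrically for $h(t_V(x))=1$. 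Hence the entire image of $t_V$ consists of error points, and since $t_V$ is a length-bounded injection its image has positive lower density, giving $\liminf E_n>0$ immediately. The key difference from your plan is that the fixed-point construction forces an error at \emph{every} image point, not just at infinitely many blocks, which is exactly what the $\liminf$ statement needs.
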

(Again we should write $2^{n+1}-1$, to be exact, but this does not matter for the limit.)

\begin{proof}
For every computable function $V$ the injective length-bounded reduction $t_V$ of $V$ to an optimal machine can be found effectively. Indeed, this happens for the ``standard'' effectively optimal machine, and then we can use Schnorr's isomorphism result to conclude that this is true for any effective optimal machine. So we can construct $V$ assuming (in a fixed-point way) that the reduction $t_V$ is known in advance, and let $V(x)=1-h(t_V(x))$. Then every point in the image of $t_V$ is an error point for $h$, and they have positive lower density because $h$ is injective and length-bounded.
\end{proof}

\begin{rem}
In this result the \emph{effective} optimality is essential. Indeed, one can construct an optimal machine $U$ such that $U^{-1}(0)$ and $U^{-1}(1)$ each contain only one element. (We keep some preimage of $0$ and suppress all others; we also do the same thing with $1$.) For this optimal machine the statement is evidently false.
\end{rem}

\subsection{Schnorr-type isomorphism results}

Schnorr~\cite{Schnorr1974} considered not only effectively optimal machines (he called them ``optimal enumerations'') but also a similar notion for numberings of computable functions. Each computable function $U(\cdot,\cdot)$ of two arguments can be considered as a \emph{numbering} of some class of computable functions of one argument. Namely, for each $e$ we consider the function $U_e\colon x\mapsto U(e,x)$ and say that $U_e$ \emph{has number $e$ in the $U$-numbering}. (Since we speak about numbers, we assume that both arguments are natural numbers, not strings.)

If every computable function of one argument appears among the $U_e$'s, the function $U$ is \emph{universal} (for the class of computable functions of one argument). A universal function $U$ is called \emph{G\"odel universal} function if for every other computable function $V(\cdot,\cdot)$ the $V$-numbering can be reduced to $U$-numbering via some total computable function $h$; this means that 
$$
   V_e=U_{h(e)} \text{ for all $e$, i.e., } V(e,x)=U(h(e),x) \text{ for all $e$ and $x$}. 
$$
The last equation means that $V(e,x)$ and $U(h(e),x)$ are both undefined or both defined and equal.

Requiring the existence of a length-bounded reduction function $h$, we get the definition of a \emph{effectively optimal numbering}. Since we consider $e$ as an integer, length-bounded functions are defined as functions $h\colon \mathbb{N}\to\mathbb{N}$ such that $h(n)=O(n)$. (This is the same notion as before if we identify natural number with strings in a standard way.) One can prove (in a standard way) that effectively optimal numberings exist; corresponding functions of two arguments are also called \emph{effectively optimal}.

In~\cite{Schnorr1974}, Schnorr used the name ``optimal numbering'' for this notion, but it is natural to reserve this name for a weaker notion defined as follows. Every computable function $U(\cdot,\cdot)$ can be used to measure ``complexity'' of computable functions: if $f(\cdot)$ is some computable function, then its complexity $\KS_U(f)$ is defined as the logarithm of the minimal $U$-number of~$f$. (Logarithms of natural numbers correspond to lengths for strings.) Complexity $\KS_U(f)$ is infinite if~$f$ does not appear among $U_e$. The computable function $U(\cdot,\cdot)$ and the corresponding numbering are called \emph{optimal} if $\KS_U$ is minimal up to $O(1)$ additive term, i.e., for every computable $V(\cdot,\cdot)$ there exists some $c$ such that $\KS_U(f)\le \KS_V(f)+c$ for all $f$.  This implies that $\KS_U(f)$ is finite for every computable $f$, i.e., that $U$ is universal.

Now we return to \emph{effectively} optimal universal functions (numberings). Schnorr proved in~\cite{Schnorr1974} that for every two \emph{effectively} optimal universal functions $U(\cdot,\cdot)$ and $U'(\cdot,\cdot)$ there exists a computable bijection $h$ that is length-bounded in both directions and $U_e=U'_{h(e)}$ for all $e$, i.e., $U(e,x)=U'(h(e),x)$ for all $e$ and $x$.

Looking at three similar results (Myhill--Schnorr theorem, Schnorr's result about optimal machines, and just mentioned Schnorr's result about numberings), it is natural to ask whether they can be generalized. Indeed it is possible, in a rather straightforward way. Before explaining this general framework, let us consider one more example: numberings of $\Sigma_n$-sets.

Let us fix some $n$ and consider $\Sigma_n$-sets, i.e., sets that can be obtained from a decidable predicate by a $n$-quantifier prefix starting with $\exists$. There exists a universal $\Sigma_n$-set of pairs (this means that every $\Sigma_n$-set can be obtained as its ``vertical section'', by fixing the first component); such a set determines a numbering of $\Sigma_n$-sets.  Then we can define \emph{optimal} universal sets (that give minimal complexity function) and \emph{effectively optimal} universal sets (that provide a length-bounded computable reduction for every other $\Sigma_n$-set of pairs), 
and prove the existence of effectively optimal universal sets. 

Optimal $\Sigma_n$-sets can be used to define a complexity notion for $\Sigma_n$-sets\footnote{Similar ideas were considered in~\cite{Calude2006}.},
  and the ``Schnorr theorem for $\Sigma_n$-sets'' then says that \emph{every two effectively optimal universal sets differ by a computable bijection that is length-bounded in both directions}.

One can provide some general framework for all these examples. Let $T$ be some abstract set. We assume that one special element $\bot\in T$ is fixed, as well as some class of total mappings $\mathbb{N}\to T$, called ``enumerations''\footnote{We use the name suggested by Schnorr, though in a more general setting.}. We assume that the class of enumerations is closed with respect to partial computable reductions:
\begin{quote}
(1)~\emph{if $\nu\colon \mathbb{N}\to T$ is an enumeration, and $f\colon \mathbb{N}\to\mathbb{N}$ is a partial computable function, then the function 
$$\mu(n)=\textup{\textbf{if }} \text{f(n) is defined} \textup{\textbf{ then }} \nu(f(n)) \textup{\textbf{ else }} \bot \textup{\textbf{ fi}}$$
is an enumeration},
\end{quote}
and has a maximal element:
\begin{quote}
(2)~\emph{there exists an enumeration $\nu$ such that for every enumeration $\mu$ there is a total computable $h$ such that $\mu(n)=\nu(h(n))$ for all $n$}.
\end{quote}
Examples: 
\begin{itemize}
\item
$T=\{\bot,\top\}$; enumerations correspond to enumerable sets (elements of the set are mapped to $\top$, others to $\bot$;\footnote{Here the term ``enumeration'' sounds confusing, since the enumerable set is the domain of an enumeration, not its range.}
\item
$T=\mathbb{N} \cup \{\bot\}$; enumerations are partial computable functions where undefined values are replaced by $\bot$;
\item
$T$ is the family of computable partial unary functions; enumerations correspond to computable partial binary functions;
\item
 $T$ is the family of $\Sigma_n$-sets; enumerations correspond to $\Sigma_n$-sets of pairs.
\end{itemize}
For a given enumeration $\nu$, a \emph{complexity} function is defined: the complexity of $t\in T$ is the logarithm of the minimal $\nu$-number of $t$. As usual, identifying integers with strings, we may defined complexity as the minimal length of $p$ such that $\nu(p)=t$. For the first example this notion is meaningless, but for the other three it is reasonable. (We get Kolmogorov complexity for the second example and complexity of computable functions as defined by Schnorr for the third one.)

One can prove (by a standard argument) that there exist optimal enumerations that make the complexity minimal up to $O(1)$; one can also strengthen this statement and derive from conditions~(1) and~(2) that there exists an \emph{effectively} optimal enumeration (every enumeration is reducible to it by a total length-bounded function).

Indeed, let $\nu$ be a maximal enumeration from (2); the first property gives us an enumeration $\omega(\hat e x)=\nu([e](x))$ where $\hat e$ is the standard self-delimiting encoding of $e$, and $[e](x)$ is the output of program $e$ on input $x$; if $[e](x)$ is undefined, then $\omega(x)=\bot$. (Here we identify integers with strings and use self-delimiting encoding and concatenations.) The enumeration $\omega$ is effectively optimal: if $\mu$ is some other enumeration, then $\mu(x)=\nu(f(x))$ for some total computable $f$, since $\nu$ is maximal; if~$e$ is the program for $f$, then $\mu(x)=\nu([e](x))=\omega(\hat e x)$, so $x\mapsto \hat e x$ is a length-bounded reduction.

To apply Schnorr's argument in this general framework, one more property is needed: 
\begin{quote}
(3)~\emph{if $\tau$ is an enumeration, and $G$ is an enumerable set of integers, there exists an enumeration $\rho$ such that:
\begin{itemize}
\item if $x\notin G$, then  $\rho(x)=\tau(x)$;
\item if $x\in G$, then $\rho(x)\ne\bot$.
\end{itemize}}
\end{quote}
It is easy to check this property for all the examples above: it says, informally speaking, that at any moment we may change an object in $T$ making it different from the ``bottom'' object $\bot$. Having this property, we may state a  generalized version of Schnorr's result as follows: 

\begin{thm}\label{th:schnorr-general}
Assume that a set $T$ and a class of enumerations are fixed that satisfy the requirements \textup{(1)--(3)}. Let $\nu$ and $\nu'$ be two effectively optimal enumerations. Then there exists a computable bijection $h$ that is length-bounded in both directions such that $\nu'(x)=\nu(h(x))$ for all $x\in \mathbb{N}$.
\end{thm}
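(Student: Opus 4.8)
The plan is to follow, essentially verbatim, the proof of the Myhill--Schnorr theorem (Theorem~\ref{th:myhill-schnorr}): first upgrade an arbitrary reduction to an \emph{injective} one (the analogue of Lemma~\ref{lem:schnorr-injection}), then feed two opposite injective length-bounded reductions into the purely combinatorial Lemma~\ref{lem:schnorr} to obtain a computable bijection that is length-bounded in both directions, and finally read off from the connected-component clause of that lemma that this bijection intertwines $\nu$ and $\nu'$. Throughout, as in Remark~\ref{rem:effective}, we may assume that a total length-bounded reduction of a given enumeration to an effectively optimal enumeration can be found effectively from an index of that enumeration; for the ``standard'' effectively optimal enumeration $\omega(\hat e x)=\nu([e](x))$ this is immediate, and it transfers to any effectively optimal enumeration by composing with a reduction of $\omega$ to it.

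First I would prove the injectivity lemma: if $\nu$ is effectively optimal and $\mu$ is an arbitrary enumeration, then there is an injective total computable length-bounded $h$ with $\mu(x)=\nu(h(x))$ for all $x$. By the recursion theorem, build an enumeration $\mu'$ that ``knows'' its own (effectively found) length-bounded reduction $h$ to $\nu$, and define it by mimicking the set $W'$ from the proof of Lemma~\ref{lem:schnorr-injection}. Call $i$ \emph{fresh} if $h(i)\notin\{h(j):j<i\}$. First apply property~(1) to $\mu$ and the partial computable function that is the identity on fresh indices and undefined elsewhere; this yields an enumeration $\tau$ with $\tau(i)=\mu(i)$ for fresh $i$ and $\tau(i)=\bot$ otherwise. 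Then apply property~(3) to $\tau$ and the enumerable set $G=\{i: i\text{ fresh and }h(j)=h(i)\text{ for some }j>i\}$, obtaining $\mu'$ with $\mu'(i)=\tau(i)$ off $G$ and $\mu'(i)\ne\bot$ on $G$. Now the familiar argument applies: if $h(k)=h(l)$ with $k<l$, then $l$ is not fresh, so $\mu'(l)=\bot$, while $k\in G$, so $\mu'(k)\ne\bot$; but $h$ reduces $\mu'$ to $\nu$, so $\mu'(k)=\nu(h(k))=\nu(h(l))=\mu'(l)$, a contradiction. Hence $h$ is injective, which makes every index fresh and $G$ empty, so $\mu'=\mu$, and $h$ is the required injective reduction of $\mu$.

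With this in hand the rest is routine. Applying the injectivity lemma twice gives injective total computable length-bounded functions $f$ with $\nu'(x)=\nu(f(x))$ and $g$ with $\nu(y)=\nu'(g(y))$. Lemma~\ref{lem:schnorr}, whose statement and proof concern only total injections $\mathbb{N}\to\mathbb{N}$, then produces a computable bijection $h\colon\mathbb{N}\to\mathbb{N}$ with $|h(x)|=|x|+O(1)$ and $|h^{-1}(x)|=|x|+O(1)$ that connects only vertices lying in the same connected component of the bipartite graph with edges $(x,f(x))$ and $(g(y),y)$. Along each such edge the $\nu'$-value of the left endpoint equals the $\nu$-value of the right endpoint, so on any connected component all left vertices share a single value and all right vertices share the same value; since $x$ and $h(x)$ lie in one component we get $\nu'(x)=\nu(h(x))$ for every $x$, as claimed.

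The only genuinely new point --- and hence the main obstacle --- is checking that properties~(1)--(3) are exactly what is needed to realize $\mu'$. The delicate part is forcing the value $\bot$ on non-fresh indices: property~(3) by itself only lifts values \emph{away} from $\bot$ and so looks too weak, but precomposing $\mu$ with a partial computable selector via property~(1) first kills all non-fresh indices, after which (3) restores exactly the values on $G$. Everything else (the combinatorial lemma, the connected-component bookkeeping, and the effective-reduction remark) carries over from Section~\ref{sec:opt-and-eff-opt} without change, provided (1)--(3) are read uniformly, which holds in all the listed examples.
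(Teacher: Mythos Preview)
Your proposal is correct and follows essentially the same route as the paper: an injectivity upgrade via the recursion theorem using properties~(1) and~(3), followed by the purely combinatorial Lemma~\ref{lem:schnorr}. The paper organizes the fixed-point step slightly differently --- it parameterizes by an index $m$ of a computable function and absorbs $m$ into a single enumerable set and a single enumeration before invoking~(3), so that no uniformity of~(3) is needed --- whereas you invoke the recursion theorem directly at the level of enumerations and flag the uniformity of~(1)--(3) as an assumption. Both presentations are valid; yours is a bit more streamlined at the cost of that extra uniformity hypothesis. One cosmetic point: when you pick $k<l$ with $h(k)=h(l)$ you should take $k$ minimal so that $k$ is fresh (otherwise $k\in G$ is not guaranteed); the paper's Lemma~\ref{lem:schnorr-injection} is equally casual about this.
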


\begin{proof}

As before, it is enough to show that every enumeration can be reduced to every effectively optimal enumeration by a total computable length-bounded injection. The second combinatorial part of the proof remains unchanged. Assume that we have an effectively optimal (therefore, universal) enumeration $\omega(\cdot)$ and some other enumeration $\mu(\cdot)$. We want to reduce $\mu$ to $\omega$ by some computable injective length-bounded total function. 

 First, we construct the uniform size-bounded reduction of all enumerations. The property~(1) guarantees that there exists an enumeration $\nu$ such that
$$
\nu(\hat e x)=\begin{cases}
\omega([e](x)), \text{ if $[e](x)$ is defined};\\
\bot \text{ otherwise}.
\end{cases}
$$
Since $\omega$ is optimal, there exists a total computable function $h(e,x)$ such that $\nu(\hat e x)=\omega(h(e,x))$ and $|h(e,x)|\le |x| + c_e$ for all $x$ and $e$. Here $c_e$ depends on $e$ (but not on~$x$); in fact $|h(e,x)|\le |x|+|\hat e|+O(1)$. Recalling the construction of $\nu$, we see that
$$
\omega(h(e,x))=\begin{cases}
\omega([e](x)), \text{ if $[e](x)$ is defined};\\
\bot \text{ otherwise}.
\end{cases}
$$
We want to find $m$ such that $[m]$ is total, 
$$ \omega([m](x))=\mu(x) $$
for all $x$, and the function $x\mapsto h(m,x)$ is injective. Then this function will be the required computable injective length-bounded reduction of $\mu$ to $\omega$.

To find such an $m$, we construct some total function $M(m,x)$ and use the fixed-point theorem to conclude that $M(m,x)=[m](x)$ for some $m$ and for all $x$. This function $M(m,x)$ will have the following properties:
\begin{itemize}
\item if $h(m,x)=h(m,x')$ for some $x'<x$, then $M(m,x)$ is some (fixed) preimage of $\bot$;
\item if $h(m,x)$ does not appear among $h(m,x')$ for all $x'\ne x$, then $\omega(M(m,x))=\mu(x)$;
\item if $h(m,x)$ does not appear among $h(m,x')$ for $x'<x$, but appears among $h(m,x')$ for $x'>x$, then $\omega(M(m,x))\ne\bot$.
\end{itemize}
The first condition is computable, so we can start by checking it. Then we need to implement a choice between the second and third conditions. The third one defines an enumerable set of pairs $\langle m,x\rangle$, and we can use the property (3) for this set and the enumeration $\tau(\langle m,x\rangle)=\mu(x)$ obtained by using~(1).

The fixed point theorem provides some $m$ such that $[m](x)=M(m,x)$ for all $x$; in particular, $[m]$ is total. Let us prove that $x\mapsto h(m,x)$ is a bijection. Assume that $h(m,x)=h(m,x')$ for $x<x'$. Then the first item says that $\omega(M(m,x'))=\bot$, so $\omega(h(m,x'))=\omega([m](x'))=\omega(M(m,x'))=\bot$.  On the other hand, the pair $\langle m,x\rangle$ is served by the third item, so $\omega(M(m,x))\ne\bot$, and thus $\omega(h(m,x))=\omega([m](x))=\omega(M(m,x))\ne\bot$. We get a contradiction with the assumption $h(m,x)=h(m,x')$. Now we know that $x\mapsto h(m,x)$ is injective, and $\omega(M(m,x))=\mu(x)$ according to the second item, so $\omega(h(m,x))=\omega([m](x))=\omega(M(m,x))=\mu(x)$ for all $x$, and we get an injective reduction of $\mu$ to $\omega$.
\end{proof}

\begin{rem}
Knowing that effectively optimal numberings (as well as effectively optimal machines) are unique up to a length-bounded isomorphism, one can consider --- for a given function or for a given string --- its frequency among the first $N$ functions (or the first $N$ outputs of the machine), and then take $\limsup$ and $\liminf$ of these frequencies. Schnorr's result guarantees that these quantities are well defined (up to a constant factor). 

The $\limsup$ is in fact constant (it is easy to construct some effectively optimal numbering or effectively optimal machine with this property, so it is $\Omega(1)$ for every numbering or machine). The $\liminf$ for decompressors, as Muchnik has shown, equals $\m^{\mathbf{0}'}$, the relativized a priori probability (see~\cite[Problem 213]{VUS2013} and \cite{BienvenuS2012}). 

\emph{Question}: What can we say about limit frequencies for computable functions and/or $\Sigma_n$-sets? Do we get some kind of relativized complexity as well?\\
\end{rem}

\textbf{Acknowledgments}. This paper is based on the work done while D.D. was visiting LIRMM (Montpellier) and Poncelet laboratory (Moscow). We thank our colleagues from both laboratories (in particular the ESCAPE team, and the Kolmogorov seminar group) for hospitality. A.S. thanks Antti Valmari for interesting discussion (during RuFiDiM seminar in Turku) that was the starting point for some of the arguments in this paper. Finally we thank two anonymous referees for their very helpful feedback.

\nocite{CaludeD2015,CaludeS2008}

\bibliographystyle{plain}
\bibliography{BienvenuDesfontainesShen}

\end{document}